\numberwithin{equation}{section}
\theoremstyle{plain}
\newtheorem{theorem}{Theorem}[section]
\newtheorem{lemma}[theorem]{Lemma}
\newtheorem{corollary}[theorem]{Corollary}
\theoremstyle{definition}
\newtheorem{definition}[theorem]{Definition}
\theoremstyle{remark}
\newtheorem{remark}[theorem]{Remark}
\newtheorem{case[theorem]}{Case}
\def \R{{\mathbb R}}
\def \C{{\mathbb C}}
\def\norm#1.#2.{\lVert#1\rVert_{#2}}
\def\R{\mathbb R}
\def \W{{\mathcal W}}
\title[Orthonormal Strichartz inequalities and their applications]{Orthonormal Strichartz inequalities and their applications on abstract measure spaces
}
\author{Guoxia Feng}
\author{Shyam Swarup Mondal} 
\author{Manli Song}
\thanks{Manli Song is the corresponding author.}
\author{Huoxiong Wu}
\address{\endgraf School of Mathematical Sciences, Xiamen University, Xiamen 361005, China}
\email{gxfeng@mail.nwpu.edu.cn}
\address{ 
	\endgraf Stat-Math unit,
	Indian Statistical Institute  Kolkata, 
	BT Road,  Baranagar, Kolkata  700108, India}
\email{mondalshyam055@gmail.com}
\address{\endgraf Research\&Development Institute of Northwestern Polytechnical University in Shenzhen, Sanhang Science\&Technology Building, No. 45th, Gaoxin South 9th Road, Nanshan District, Shenzhen City, 518063}
\address{\endgraf School of Mathematics and Statistics, Northwestern Polytechnical University, Xi'an, Shaanxi 710129, China}
\email{mlsong@nwpu.edu.cn}
\address{\endgraf School of Mathematical Sciences, Xiamen University, Xiamen 361005, China}
\email{huoxwu@xmu.edu.cn}
\keywords{Decay estimates; non-negative self-adjoint operators; measure space; Hermite operator; twisted Laplacian; Laguerre operator; Strichartz estimates.}
\subjclass[2010]{Primary 42B37, 47D08, 35B65.}
\date{\today}
\begin{document}
	
	\maketitle

	\allowdisplaybreaks

	\begin{abstract}
		The main objective of this paper is to extend certain fundamental inequalities from a single function to a family of orthonormal systems. In the first part of the paper, we consider a non-negative, self-adjoint operator $L$ on $L^2(X,\mu)$, where $(X,\mu)$ is a measure space.
		Under the assumption that the kernel $K_{it}(x,y)$ of the Schr\"{o}dinger propagator $e^{itL}$ satisfies a uniform $L^\infty$-decay estimate of the form
		\begin{equation*}
			\sup_{x,y\in X}|K_{it}(x,y)|\lesssim |t|^{-\frac{n}{2}},\,|t|<T_0, \text{ for some }n\geq1,
		\end{equation*}
		where $T_0\in(0,+\infty]$,
		we establish Strichartz estimates for the Schr\"{o}dinger propagator $e^{itL}$ and using a duality principle argument by Frank-Sabin \cite{FS}, we extend it for a system of infinitely many fermions on $L^2(X)$.    We also obtain orthonormal Strichartz estimates for a  class of dispersive semigroup $U(t)=e^{it\phi(L)}\psi(\sqrt{L}),$ where   $\phi: \mathbb{R}^+\rightarrow \mathbb{R}$ is a smooth function and $\psi\in C_c^\infty([\frac{1}{2},2])$. As an application of these orthonormal versions of Strichartz estimates, we prove the well-posedness for the Hartree equation in the Schatten spaces.

		In the next part of the paper, we obtain some new orthonormal Strichartz estimates, which extend prior work of Kenig-Ponce-Vega \cite{Kenig-Ponce-Vega} for single functions.  Using those  orthonormal versions of Kenig-Ponce-Vega result,  we  prove the orthonormal  restriction theorem for the Fourier transform on  some particular noncompact hypersurface of the form   
		$S=\{(\xi, \phi(\xi): \xi\in \R)\}$,     where $\phi$  satisfies certain growth condition. 
	\end{abstract}
	\tableofcontents 	
	
	\section{Introduction}
	\subsection{Orthonormal Strichartz estimates}	For a  measure space   $(X,\mu)$,  the first part of this paper is concerned to derive Strichartz estimates  for a non-negative self-adjoint operator  $L$  on $L^2(X)$ of the  form
	\begin{equation}\label{eq11111}
		\left\|\sum_{j\in J}n_j|e^{itL}f_j|^2\right\|_{L^q(\mathcal{I},L^p(X))} \leq C\|\{n_j\}_{j\in J}\|_{\ell^{\lambda}},
	\end{equation}
	for families of orthonormal data  $\{f_j\}_{j\in J}$ in $L^2(X)$, $\{n_j\}_{j\in J} \in \ell^{\lambda}$  and $p, q, \lambda \in [1, \infty]$ satisfy some particular
	conditions.

	The motivation to investigate generalizations of fundamental estimates to such orthonormal estimates of the form (\ref{eq11111}) came from the theory of many body quantum mechanics.
	A system of $N$ independent fermions in the Euclidean space $\mathbb{R}^{d}$ can be represented in quantum mechanics by a set of $N$ orthonormal functions $u_{1}, \ldots, u_{N}$ in the Hilbert space $L^{2}\left(\mathbb{R}^{d}\right)$. It is consequently critical to develop functional inequalities on these systems with optimal behavior in the finite number $N$ of such orthonormal functions.  For this specific reason, the mathematical study of large quantum systems finds great advantage in functional inequalities involving system of orthonormal functions. Note that the idea of generalizing classical inequalities from a single-function input to an orthonormal family is not a new topic, and the first pioneering work of this kind of generalization can be traced back to the work of  Lieb-Thirring  \cite{lieb, liebb}.  This Lieb-Thirring inequality generalizes extended versions of certain Gagliardo–Nirenberg–Sobolev inequalities, which is considered to be one of the most important tools for the study of the stability of matter \cite{lieb}.  There are several reasons to analyze estimates of the form (\ref{eq11111}). Particularly,  application to the Hartree equation, which models an infinite number of fermions in a quantum system, can be found in  \cite{BLN, LS1, LS2}. 
	
	In recent years, a significant attention has been devoted by numerous researchers to investigate Strichartz estimates in the framework of many body quantum systems.  In this direction, the Strichartz estimates for orthonormal families of initial data was first proved by Frank-Lewin-Lieb-Seiringer \cite{frank}  for the free Schr\"odinger equation  of the form 
	\begin{equation}\label{eq111}
		\left\|\sum_{j\in J} n_j|e^{it\Delta}f_j|^2\right\|_{L^q(\mathbb{R},L^p(\mathbb{R}^d))} \leq C \|\{n_j\}_{j\in J}\|_{\ell^\frac{2p}{p+1}},
	\end{equation}
	where $\{f_j\}$ is an  orthonormal  system $L^2(\mathbb{R}^d)$, $\{n_j\}_{j\in J}\in \ell^\frac{2p}{p+1}$ and $p, q\in [1, \infty]$ satisfy  some particular conditions. The estimate \eqref{eq111}  significantly generalizes the   Strichartz estimate in the classical setting for a single function. This orthonormal estimate \eqref{eq111} was further extended in different contexts, see  \cite{BLN, FS, BHLNS}.  Later, orthonormal Strichartz estimates for some special operators such as the Hermite operator, the twisted Laplacian, the Laguerre operator, the $(k, a)$-generalized Laguerre operator, and the Dunkl operator can be found in \cite{MS}, \cite{GMS}, \cite{GM}, \cite{MM}, and \cite{JMMP}, respectively.  We also refer to \cite{Hos} for orthonormal strichartz estimates for operators with general potential.  Note that the dispersive estimate property for the kernel of the target operator plays a key role in obtaining the orthonormal inequalities.  Motivated by the recent works in the direction of the orthonormal system of initial data, here,   under some suitable dispersive assumptions on the kernel of the general Schr\"odinger propagator, we want to investigate such types of orthonormal estimates in a more general framework, mainly on general measure spaces.

	Assume that $(X,\mu)$ is a measure space. For $1\leq p\leq \infty$, we  denote the space  $L^p(X)$ of all such function $f: X\rightarrow \mathbb{C}$ such that $\|f\|_{L^p(X)}<\infty$, where 
	\begin{equation*}
		\|f\|_{L^p(X)}=
		\begin{cases}\left(\int_X |f(x)|^pd\mu(x)\right)^\frac{1}{p},&1\leq p<\infty\\
			ess \sup\limits_{x\in X} |f(x)|,& p=\infty.
		\end{cases}
	\end{equation*}

	In the study of Strichartz estimates for Schr\"odinger or wave equation,  the kernel of the Schr\"odinger semigroup plays a fundamental role. Taking this into account, in this paper we also make
	the following assumption for the study of the Schr\"odinger equations in the
	abstract setting.
	
	Suppose $L$ is a non-negative self-adjoint operator on $L^2(X)$. Then the Schr\"odinger operator $e^{itL}$ is an unitary operator on $L^2(X)$. We assume that  
	
	\underline{Assumption $(\mathcal{A})$} $e^{itL}$ is  an integral operator with kernel $K_{it}(x,y)$, i.e., 
	\begin{equation*}
		e^{itL}f(x)=\int_X K_{it}(x,y)f(y)d\mu(y), \quad   \text{a.e.}~x\in X,
	\end{equation*}
	for every $f\in L^2(X)$. 
	
	\underline{Assumption $(\mathcal{B})$} The kernel $K_{it}(x,y)$ of the Schr\"odinger operator $e^{itL}$ satisfies an uniform dispersive estimate:
	\begin{equation*}
		\sup_{x,y\in X}|K_{it}(x,y)|\lesssim |t|^{-\frac{n}{2}},\,|t|<T_0, \text{ for some }n\geq1,
	\end{equation*}
	where $T_0\in(0,+\infty]$. This is frequently the case for many important operators, notably the Laplacian $L=-\Delta$ on the Euclidean space $\R^n$ and its potential perturbations, such as the Hermite operator, the twisted Laplacian, and the Laguerre operator, etc. 
	
	Denoting  $\mathcal{I}=(-T_0,T_0)$,   the mixed space-time norm is defined in the usual way by
	\begin{equation*}
		\|f\|_{L^q(\mathcal{I},L^p(X))}=\left(\int_\mathcal{I}\left(\int_X|f(x,t)|^pd\mu(x)\right)^\frac{q}{p}dt\right)^\frac{1}{q},
	\end{equation*}
	with the obvious modifications if $p=\infty$ or $q=\infty$, in which case we use the supremum.
	
	\begin{remark}In the famous work by Keel-Tao \cite{KT}, in order to study Strichartz estimates for a general class of energy conservation operators $U(t)$ $(t\in\mathbb{R})$ on general measure spaces, they assumed that the $L^1\rightarrow L^\infty$ norm of $U(t)$ obeys a dispersive estimate. Additionally, if $X$ is Separable measure space,  by \cite{Simon1982}, the dispersive estimate on the $L^1\rightarrow L^\infty$ norm of $U(t)$ is equivalent to the estimate on its kernel. Since the dispersive type estimate assumption on the kernel of $U(t)$ is an essential tool in establishing orthonormal inequalities, for this reason, we also make the dispersive assumption on its kernel.
	\end{remark}
	
	Then under the above assumptions, our first main result of this paper is the following orthonormal inequality in general measure spaces. 
	\begin{theorem}\label{OSI}
		Assume $L$ is a non-negative self-adjoint operator on $L^2(X)$ satisfying Assumption $(\mathcal{A})$ and $(\mathcal{B})$. Suppose $p,q\geq 1$ satisfies \begin{equation*}
			\frac{2}{q}+\frac{n}{p}=n. 
		\end{equation*}
		(i) If  $  1\leq p<\frac{n+1}{n-1},$  then 
		\begin{equation}\label{eq1}
			\left\|\sum_{j\in J}n_j|e^{itL}f_j|^2\right\|_{L^q(\mathcal{I},L^p(X))} \leq C\left\|\{n_j\}_{j\in J}\right\|_{\ell^\frac{2p}{p+1}},
		\end{equation}
		holds for any (possibly infinite) orthonormal system $\{f_j\}_{j\in J}$ in $L^2(X)$ and any sequence $\{n_j\}_{j\in J}$ in $\mathbb{C}$,
		where $C>0$ is independent of $\{f_j\}_{j\in J}$ and $\{n_j\}_{j\in J}$.\\
		
		(ii) If 
		\begin{align*}
			\begin{cases}  1\leq n<2, \\
				\frac{n+1}{n-1}\leq p\leq \infty,
			\end{cases}or ~~ \begin{cases}  n=2  \\
				3\leq p<\infty,
			\end{cases} or ~~ \begin{cases}  n>2\\
				\frac{n+1}{n-1}\leq p\leq \frac{n}{n-2},
			\end{cases}
		\end{align*}   then for  any $1\leq\lambda<q$
		\begin{equation}\label{eq2}
			\left\|\sum_{j\in J}n_j|e^{itL}f_j|^2\right\|_{L^q(\mathcal{I},L^p(X))} \leq C\left\|\{n_j\}_{j\in J}\right\|_{\ell^\lambda},
		\end{equation}
		holds for any (possibly infinite) orthonormal system $\{f_j\}_{j\in J}$ in $L^2(X)$ and any sequence $\{n_j\}_{j\in J}$ in $\mathbb{C}$,
		where $C>0$ is independent of $\{f_j\}_{j\in J}$ and $\{n_j\}_{j\in J}$.
	\end{theorem}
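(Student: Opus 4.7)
My plan is to follow the duality strategy of Frank-Sabin \cite{FS}. First I would derive the classical (single-function) Strichartz estimate $\|e^{itL}f\|_{L^q(\mathcal{I},L^p(X))}\lesssim \|f\|_{L^2(X)}$ for all admissible pairs $\frac{2}{q}+\frac{n}{p}=n$, $q\ge 2$ (excluding the $n\ge 3$ double endpoint $(q,p)=(2,\tfrac{2n}{n-2})$). This follows at once from the abstract Keel-Tao theorem \cite{KT} applied to $U(t)=e^{itL}$: self-adjointness of $L$ gives energy conservation on $L^2(X)$, and Assumption $(\mathcal{B})$ together with $(\mathcal{A})$ supplies the required dispersive bound $\|U(t)\|_{L^1\to L^\infty}\lesssim|t|^{-n/2}$ on $\mathcal{I}$.

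Next, for a weight $W(t,x)$ I introduce the operator $T_W\colon L^2(X)\to L^2(\mathcal{I}\times X)$ defined by $(T_Wf)(t,x)=W(t,x)e^{itL}f(x)$. By the Frank-Sabin trace-duality principle, the orthonormal bound \eqref{eq1} with $\ell^\alpha$ coefficient norm on the right-hand side is equivalent to the Schatten-norm estimate
$$\|T_W\|_{\mathfrak{S}^{2\alpha'}}\lesssim \|W\|_{L^{2q'}(\mathcal{I},L^{2p'}(X))},$$
where $\alpha'$ is the Hölder conjugate of $\alpha$. I would then prove this Schatten bound by complex interpolation between two endpoints. The $\mathfrak{S}^\infty$ endpoint (operator norm, $\alpha=1$) is immediate from the scalar Strichartz estimate above combined with Hölder's inequality. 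For the $\mathfrak{S}^2$ endpoint (Hilbert-Schmidt) I would invoke the $TT^*$-identity, which produces the integral operator $T_W T_W^* F(t,x)=W(t,x)\int_\mathcal{I}\!\int_X K_{i(t-s)}(x,y)\overline{W(s,y)}F(s,y)\,d\mu(y)\,ds$; the kernel bound $|K_{i(t-s)}(x,y)|\lesssim |t-s|^{-n/2}$ from Assumption $(\mathcal{B})$ produces a $|t-s|^{-n/2}$ convolution kernel in the time variable, which is then absorbed by Hardy-Littlewood-Sobolev in time combined with Hölder in space. In the range $1\le p<\frac{n+1}{n-1}$ the HLS exponents lie within the legal range, and interpolation lands exactly at $2\alpha'=p+1$, i.e.\ $\alpha=\frac{2p}{p+1}$, giving case (i).

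For case (ii), where $p\ge\frac{n+1}{n-1}$, the Hardy-Littlewood-Sobolev step at the Hilbert-Schmidt endpoint fails at the boundary of its range, so the sharp exponent $\alpha=\frac{2p}{p+1}$ is not attainable by the same complex interpolation. I would recover \eqref{eq2} for any $1\le\lambda<q$ by interpolating the case (i) inequality, applied at an admissible pair inside the sharp range, against a trivial bound obtained from the triangle inequality followed by a scalar Strichartz estimate at a neighboring admissible pair; the strict inequality $\lambda<q$ reflects the fact that the optimal exponent sits on the open boundary of the interpolation segment. The main obstacle throughout is making the Hilbert-Schmidt endpoint rigorous in the abstract measure-space setting: without further structure on $(X,\mu)$ one cannot fall back on Plancherel and must extract the $|t-s|^{-n/2}$ time decay purely from the pointwise kernel bound in $(\mathcal{B})$, while simultaneously tracking the dependence of the constant on $T_0$ so that the HLS step in the time variable closes without boundary divergences when $T_0<\infty$.
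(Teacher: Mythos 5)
Your overall strategy (Keel--Tao for the scalar estimate, Frank--Sabin duality to reduce to a Schatten bound on $W\mathcal{T}\overline{W}$ where $\mathcal{T}=AA^*$, then interpolation) is the right framework, and your handling of part (ii) by interpolating part (i) against a trivial $\ell^1$ bound matches what the paper does. However, there is a genuine gap in the interpolation step that produces part~(i): the $\mathfrak{S}^2$ endpoint you describe does not exist.

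Concretely, if you take $\mathcal{T}$ with its actual kernel $K_{i(t-s)}(x,y)$, the Hilbert--Schmidt norm of $W_1\mathcal{T}W_2$ is controlled by
\begin{equation*}
\int_{\mathcal{I}\times\mathcal{I}}\int_{X\times X}|W_1(x,t)|^2\,|K_{i(t-s)}(x,y)|^2\,|W_2(y,s)|^2\,d\mu(x)\,d\mu(y)\,dt\,ds
\lesssim \int_{\mathcal{I}\times\mathcal{I}}\frac{\|W_1(\cdot,t)\|_{L^2}^2\,\|W_2(\cdot,s)\|_{L^2}^2}{|t-s|^{n}}\,dt\,ds,
\end{equation*}
and Hardy--Littlewood--Sobolev in the time variable requires the singularity exponent $n$ to be strictly less than $1$. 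Since the theorem assumes $n\geq 1$ throughout, this $\mathfrak{S}^2$ bound never closes with the unmodified kernel; even at $n=1$ you sit exactly at the forbidden HLS endpoint. Moreover, even if it did close, both your proposed endpoints sit at the same time-exponent for $W$ (namely $L^2_t$), so interpolating between them would only reach the trivial point $(q,p)=(\infty,1)$ and not the open range $1\leq p<\frac{n+1}{n-1}$.

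The paper circumvents this by a Stein analytic-interpolation in an auxiliary parameter $z$, applied to the family $T_{z,\varepsilon}$ with kernel $t^{z}\chi_{\varepsilon<|t|<T_0}K_{it}(x,y)$. The $\mathfrak{S}^2$ bound is then placed at $\operatorname{Re}(z)$ close to $n/2$ where the kernel decay becomes $|t|^{\operatorname{Re}(z)-n/2}$, so that after squaring the HLS exponent $2(n/2-\operatorname{Re}(z))$ drops below $1$. The other endpoint is not the trivial scalar-Strichartz operator bound: it is placed at $\operatorname{Re}(z)=-1$, where, after conjugating by the unitary group $e^{-itL}$, the operator $T_{z,\varepsilon}$ reduces to a truncated Hilbert-transform-type multiplier in $t$, hence is bounded on $L^2_tL^2_x$ uniformly in $\varepsilon$ with at most exponential growth in $\operatorname{Im}(z)$. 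Interpolating this analytic family at $z=0$ produces the sharp Schatten exponent in the range $n+1<p\leq n+2$ (Lemma \ref{Schatten}); a second, non-analytic interpolation against the trivial $(q,p)=(\infty,1)$ endpoint then extends the Schatten bound to $n+2\leq p\leq\infty$, and the duality principle converts the union $n+1<p\leq\infty$ into the claimed range $1\leq p<\frac{n+1}{n-1}$. Your proposal omits both the $z$-deformation of the kernel and the $\operatorname{Re}(z)=-1$ Hilbert-transform endpoint, which are the two essential technical ingredients; the ``trivial $\mathfrak{S}^\infty$ from scalar Strichartz'' appears in the paper only in the secondary interpolation step, not as the endpoint of the analytic family.
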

	Moreover, argued similarly as the proof of the above theorem, using the duality principle in Lorentz spaces and Strichartz estimates for a single function in Lorentz spaces (see Remark \ref{Strichartz-Lorentz}), we can also prove orthonormal Strichartz versions in Lorentz spaces,  which is in fact crucial to obtain the orthonormal Strichartz estimates of initial data with regularity for the classical Schr\"odinger operator in \cite{BHLNS}. Furthermore,  our approach can also be applied to obtain orthonormal Strichartz estimates for a general class of dispersive semigroup associated with $L$. Consider the truncated (or called frequency localized) dispersive semigroup $U(t)=e^{it\phi(L)}\psi(\sqrt{L})$, where $\phi: \mathbb{R}^+\rightarrow \mathbb{R}$ is a smooth function and $\psi\in C_c^\infty([\frac{1}{2},2])$.  Here, we assume $U(t)$ is an integral operator  on $L^2(X)$ with kernel satisfies dispersive decay condition similarly as in  {Assumption $(\mathcal{B})$}. We refer to \cite{Feng-Song-Wu, Song-Yang, Song-Tan} for a detailed dispersive type estimate for the kernel of the semigroup $U(t)=e^{it\phi(L)}\psi(\sqrt{L})$ on abstract measure spaces and H-type groups.   Keeping in mind that kernel of $U(t)$ satisfies dispersive decay condition (similarly  to {Assumption $(\mathcal{B})$}), by Keel-Tao \cite{KT}, we obtain the following Strichartz estimate for $U(t)$ involving a single function 
	\begin{equation}\label{gebneral}
		\|U(t)f\|_{L^{2q}(\mathcal{I},L^{2p}(X))}\lesssim_{\phi,\psi} \|f\|_{L^2(X)},
	\end{equation}
	holds for any $p,q\geq1$ satisfying $\frac{2}{q}+\frac{n}{p}=n$ except for $(q,p,n)=(1,\infty,2)$. Since orthogonality is not preserved by the action of the frequency localized operator, we cannot directly apply Theorem  \ref{OSI} to get the orthonormal version of (\ref{gebneral}), however,  using similar techniques as in the proof of Theorem  \ref{OSI}, we will extend (\ref{gebneral}) for a system of orthonormal functions. Indeed, these orthonormal frequency localized estimates are quite useful to establish the orthonormal Strichartz estimates of initial data with regularity. In particular, when $L=-\Delta$,  the classical Laplacian on $X=\mathbb{R}^n$ and $\phi(r)=r$, Bez-Hong-Lee-Nakamura-Sawano in \cite[Theorem 1.4]{BHLNS} proved the frequency localized estimates for the Schr\"odinger operator $e^{it\Delta}$  and obtained orthonormal Strichartz estimates of initial data with regularity. In this direction, we will investigate orthonormal Strichartz estimates of initial data with regularity for a non-negative self-adjoint operator on abstract measure spaces in our forthcoming paper.

	Notice that $u(x,t)=e^{itL}f(x)$  is the solution to the Schr\"{o}dinger equation given by 
	\begin{equation*}
		\begin{cases}
			i\partial_t u(x,t)+L u(x,t)=0\\
			u(0,t)=f(x).
		\end{cases}
	\end{equation*}
	In the case of a finite number $M$ of particles, such a system is modelled by $M$ orthonormal functions $u_1, \ldots, u_M$ in $L^2(X)$ satisfying  the following system of Hartree equations	 with respect to a potential $w$
	$$
	\left\{\begin{array}{rlrl}
		i \partial_t u_1 & =\left(L+w * \rho\right) u_1, & & \left.u_1\right|_{t=0}=f_1 \\
		& \vdots \\
		i \partial_t u_M & =\left(L+w * \rho\right) u_M, & & \left.u_M\right|_{t=0}=f_M,
	\end{array}\right.
	$$
	where $(x, t) \in  X \times \mathbb{R},\left\{f_j\right\}_{j=1}^M$ is an orthonormal system in $L^2\left(X\right)$ and $\rho$ is a density function defined by $\rho(x, t)=\sum\limits_{j=1}^M\left|u_j(x, t)\right|^2$.  One of the natural and interesting questions is to understand the behavior of the solution to the above system in the case when $M \rightarrow \infty$.  Regarding this, in this paper, we prove the global existence of the solution to the above Hartree equation for infinitely many particles as an application of the inhomogeneous orthonormal Strichartz estimates associated with the non-negative self-adjoint operator $L$.

	\subsection{Orthonormal smoothing estimates}
	Understanding how geometry affects the behavior of solutions to the nonlinear evolution of partial differential equations is a key problem in the field of analysis.   As we discussed earlier, the Strichartz inequality and the smoothing property are the two most stable methods used in the analysis of nonlinear dispersive equations. Strichartz inequality often refers to mixed $L^p-L^q$ space-time norm estimates of the solution, while smoothing properties are Sobolev $L^2$ space-time estimates.  More precisely,   the Strichartz estimate provides the space-time control of solutions while the smoothing estimate allows us to gain derivatives of solutions on any bounded domains.    Typically one employs smoothing estimates to overcome the loss of derivatives caused by the semilinear terms from the nonlinearity.  Nevertheless, both approaches entirely  rely on global analysis for solutions to the corresponding linearized equations.  Exploring different smoothing properties of dispersive-wave equations has been greatly enhanced by the discovery of the local Kato-smoothing property. This has led to an improvement in the mathematical theory of nonlinear dispersive-wave equations, specifically the well-posedness of their Cauchy problems in low regularity spaces.

	Consider the class of dispersive Cauchy problem
	\begin{align}\label{smo}
		\left\{\begin{array}{l}
			\partial_t u-i P(D) u=0, \quad x \in \mathbb{R}^d, t \in \mathbb{R}, \\
			u(x, 0)=u_0(x),
		\end{array}\right.
	\end{align}
	where $D=-i \nabla_x$.  The solution of the Cauchy problem (\ref{smo}) is given by $e^{itP(D)}u_0$, where the  operator $P(D)$ is defined through its Fourier symbol $P(\xi)$, i.e., 
	$$P(D)f(x)=\int_{\mathbb{R}^d} e^{ix\xi}P(\xi)\hat{f}(\xi)d\xi.$$    
	When the  symbols $P(\xi)$   roughly behaves like $|\xi|^m$ as   $|\xi |\to \infty$ with $m>1$, then   the local smooting effect happens  for  the solution $u$ to (\ref{smo}), i.e., if $u_0\in H^s(\mathbb{R}^d)$ then the solution $u$  has $(m-1)/2$ derivative in $L^\infty(\mathbb{R}^d)$.  This local smoothing effect, as noted by Constantin-Saut in \cite{CS}, is due to the dispersive nature of the linear part of the equation, and the gain of the regularity $(m-1)/2$,  is solely determined by the order $m$ of the equation and has nothing to do, in particular, with the dimension of the spatial domain $d$.   In fact, the higher the order $m$, the more dispersive the equation is and the stronger the local smoothing effect becomes.  
	
	Furthermore, if $P(\xi)=(-\Delta)^m+R(\xi)$, where $R$ is a pseudo-differential operator whose symbol  belongs to the class of symbols $S_{1,0}^{2 m-1}$, then it was proved in  \cite{BAL}    that   for $u_0 \in L^1\left(\R^d\right)$, the solution  $u $ has $r(m)$ derivatives in $L^{\infty}\left(\mathbb{R}^d\right)$, where \begin{itemize}
		\item $r(m) \leq(m-1)(d-1)-2~$ if $n$ is odd. 
		\item $r(m) \leq(m-1)(d-2)-2~$ if $n$ is even.
	\end{itemize}  
	For the solution of the  linear KdV equation (Airy equation), i.e., $P(\xi)=\xi^3$ with $n=1$, 
	\begin{itemize}
		\item  For  $u_0 \in L^1(\mathbb{R})$ data, $\frac{1}{2}$ derivatives of $u$ in $L^{\infty}(\mathbb{R})$ a.e. $t$, see \cite{KP}.
		\item  For  $u_0 \in L^2(\mathbb{R})$ data, $\frac{1}{4}$ derivatives  of $u$ in $L^{\infty}(\mathbb{R})$ a.e. $t$, see \cite{KPV} .
	\end{itemize} 
	The smoothing effect for the Korteweg-de Vries equation was originally demonstrated by Kato in \cite{Kato}, but for Schr\"odinger equations, the homogeneous smoothing effect was simultaneously established by Sj\"olin 
	\cite{Sj} and Vega \cite{Vega88}. Constantin and Saut investigated the homogeneous smoothing effect for general dispersive equations in \cite{CS}, while Kenig, Ponce, and Vega improved and generalized it in \cite{Kenig-Ponce-Vega}. For example, Kenig-Ponce-Vega in \cite{Kenig-Ponce-Vega} extend the  above results of \cite{KPV}  concerning the global smoothing effect of solutions  to \eqref{smo} with $d=1$ to general differential operators of the form 
	\begin{equation}\label{higher}
		\mathcal{W}_\nu(t) f(x)=\int_{\Omega} e^{i\left(t\phi(\xi)+x\xi\right)}|\phi''(\xi)|^\frac{\nu}{2} \hat{f}(\xi) d\xi, 
	\end{equation} for $\nu\geq 0$ and  $\phi$ is from  the  class $\mathcal{A}$, see Definition \ref{Definition} for details about the class $\mathcal{A}$  and   for any other unknown   notations used here. In a particular case,    Kenig-Ponce-Vega also proved that if the initial data $u_0 \in L^2(\mathbb{R})$, then $\left|P^{\prime \prime}(D)\right|^{1 / 4} u(\cdot, t) \in L^{\infty}(\mathbb{R})$ a.e. $t$.  The two primary standard approaches to derive the above smoothing estimates are the restriction theorem and resolvent estimate (or its variants). However,   regarding the operator $\mathcal{W}_\nu(t)$, in \cite{Kenig-Ponce-Vega},  the authors   also  proved     
	the following smoothing estimates for a single function in one dimension. 
	\begin{theorem} \label{SS1}
		Let $\phi\in\mathcal{A}$. Then for any $\nu\in [0,1]$ and $(p,q)=\left(\frac{2}{1-\nu},\frac{4}{\nu}\right)$,
		\begin{align}\label{S1}
			\|\mathcal{W}_\frac{\nu}{2}(t) f\|_{L^q(\mathbb{R},L^p(\mathbb{R})}&\leq C\|f\|_{L^2(\mathbb{R})}
		\end{align}
		for a constant $C>0$ independent of $f$. 
	\end{theorem}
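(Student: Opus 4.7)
The plan is to reduce \eqref{S1} to its two endpoints $\nu=0$ and $\nu=1$ and then recover the full range by complex interpolation: the pair $(p,q)=(2/(1-\nu),4/\nu)$ traces exactly the segment $(\tfrac{1}{p},\tfrac{1}{q})=(1-\nu)(\tfrac12,0)+\nu(0,\tfrac14)$ joining $(2,\infty)$ at $\nu=0$ to $(\infty,4)$ at $\nu=1$. At $\nu=0$ the operator $\mathcal{W}_0(t)$ is simply the Fourier multiplier with unimodular symbol $e^{it\phi(\xi)}$, so Plancherel applied pointwise in $t$ gives $\|\mathcal{W}_0(t)f\|_{L^\infty_t L^2_x}=\|f\|_{L^2(\mathbb{R})}$ for free. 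For the interpolation I would introduce the analytic family $T_z f(x)=\int e^{i(t\phi(\xi)+x\xi)}|\phi''(\xi)|^{z/2}\hat f(\xi)\,d\xi$ in the strip $0\le\mathrm{Re}\,z\le 1/2$ (well defined since $|\phi''|>0$ on $\Omega$ for $\phi\in\mathcal{A}$) and apply Stein's analytic interpolation between the $\mathrm{Re}\,z=0$ and $\mathrm{Re}\,z=1/2$ endpoints, which produces \eqref{S1} at $z=\nu/2$ for every $\nu\in(0,1)$.

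The substantive step is the $\nu=1$ endpoint $\|\mathcal{W}_{1/2}(t)f\|_{L^4_t L^\infty_x}\lesssim\|f\|_{L^2(\mathbb{R})}$. I would prove this by a standard $TT^*$ argument with $T=\mathcal{W}_{1/2}$: the operator $TT^*$ acts as space--time convolution against the kernel
\begin{equation*}
K(\tau,z)=\int_{\Omega}e^{i(\tau\phi(\xi)+z\xi)}|\phi''(\xi)|^{1/2}\,d\xi,
\end{equation*}
where the amplitude $|\phi''(\xi)|^{1/2}$ is the product of the two $|\phi''(\xi)|^{1/4}$ weights supplied by $T$ and $T^*$. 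The key analytic input is the pointwise dispersive bound $|K(\tau,z)|\lesssim|\tau|^{-1/2}$, uniform in $z$, which I would obtain from stationary phase applied to $\Psi_{\tau,z}(\xi)=\tau\phi(\xi)+z\xi$: at a critical point $\xi_0$ determined by $\phi'(\xi_0)=-z/\tau$ one has $\Psi''(\xi_0)=\tau\phi''(\xi_0)$, so stationary phase contributes a factor $|\tau\phi''(\xi_0)|^{-1/2}$ which cancels the amplitude $|\phi''(\xi_0)|^{1/2}$ and leaves the clean decay $|\tau|^{-1/2}$. Once this is in hand, Minkowski's inequality in $x$ followed by the one-dimensional Hardy--Littlewood--Sobolev inequality in $t$ (with exponent balance $\tfrac34-\tfrac14=\tfrac12$) yield $\|TT^*g\|_{L^4_t L^\infty_x}\lesssim\|g\|_{L^{4/3}_t L^1_x}$, and the $TT^*$ identity converts this into the desired $L^2\to L^4_t L^\infty_x$ bound for $T$.

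The main expected obstacle is making the stationary-phase estimate robust and uniform in $(\tau,z)$ globally on $\Omega$. The structural hypotheses packaged into the class $\mathcal{A}$ of Definition \ref{Definition} are precisely what is needed here: they ensure that $\phi''$ has a definite sign and that one has uniform control on $\phi'$ and $\phi''$, so that there is a unique (or, at worst, controllably many) critical point and no degeneration. In regions where $|\phi''|$ is small one would likely combine a dyadic decomposition in $|\phi''|$ with van der Corput's lemma. The conceptual role of the weight $|\phi''|^{1/2}$ built into $K$ is exactly to absorb the otherwise singular factor $|\phi''(\xi_0)|^{-1/2}$ arising from stationary phase at critical points where $\phi''$ is small; this is the structural reason the theorem is formulated for the weighted operator $\mathcal{W}_{\nu/2}$ rather than for the unweighted semigroup $e^{it\phi(D)}$.
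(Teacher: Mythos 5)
Your proposal is correct and follows essentially the route the paper indicates (Remark after Theorem \ref{SS1}, and the ingredients of Lemma \ref{decay-1}): the dispersive decay $|I_z(x,t)|\lesssim(1+|Im(z)|)\,|t|^{-1/2}$ at $\mathrm{Re}\,z=1$ obtained from stationary phase with the $|\phi''|^{1/2}$ weight absorbing the Hessian factor, a $TT^*$ reduction plus the one--dimensional Hardy--Littlewood--Sobolev inequality at the $(p,q)=(\infty,4)$ endpoint, and Stein complex interpolation against the trivial $L^\infty_t L^2_x$ bound at $\mathrm{Re}\,z=0$. The only small inaccuracy is your parenthetical that $|\phi''|>0$ on $\Omega$ for $\phi\in\mathcal{A}$: Definition \ref{Definition} explicitly allows $\phi''$ to vanish (or blow up) at the finitely many points of $S_\phi$, with power--type behaviour $c_1|\xi-\xi_0|^{\alpha-2}\le|\phi''(\xi)|\le c_2|\xi-\xi_0|^{\alpha-2}$ near them; the analytic family $T_z$ is still well defined a.e.\ and your later appeal to dyadic decomposition in $|\phi''|$ together with van der Corput is precisely what handles these degeneracies, so the argument itself is unaffected.
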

	
	\begin{remark}
		Its important to note that, the operator  $\mathcal{W}_\nu(t)$ does not satisfy the semigroup property, i.e., $\mathcal{W}_\nu(t)\mathcal{W}_\nu(s) \neq \mathcal{W}_\nu(t+s)$, while it still obeys the similar type property $\mathcal{W}_\nu(t)\mathcal{W}_\nu(s)=\mathcal{W}_{2\nu}(t+s)$. Even if $\mathcal{W}_\nu(t)$ does not form a semigroup, using the dispersive estimates for the kernel of $\mathcal{W}_\nu(t)$ (see Lemma \ref{decay-1}) along with the complex interpolation and standard duality arguments, one can obtain  \eqref{S1}.
	\end{remark} 
	Indeed, \eqref{S1} generalizes the known classical Strichartz estimates proved by Strichartz in \cite{RS} for one-dimensional Schr\"odinger equations.  Here we also want to emphasize that the operator $\mathcal{W}_{\nu}(t) $  also can be seen as a restriction of the Fourier transform on certain noncompact hypersurface $S$  of $\mathbb{R}^2$. In fact, if we consider   the   hypersurface  as 
	$$S=\{(\xi, \phi(\xi): \xi\in\mathbb{R}\},$$
	endowed with the measure
	$$d\sigma_\nu(\xi,\phi(\xi))=\frac{|\phi''(\xi)|^\frac{\nu}{2}}{\sqrt{1+|\phi'(\xi)|^2}}d\xi,$$
	then for all $F\in L^1(S,d\sigma_\nu)\cap L^2(S,d\sigma_\nu)$ and $(x,t)\in\mathbb{R}^2$,   one   has the following identity 
	\begin{align*}
		E^\nu_SF(x,t)&=\frac{1}{4\pi^2}\int_S e^{i(x,t)\cdot (\xi,\phi(\xi))} F(\xi,\phi(\xi))d\sigma_\nu(\xi,\phi(\xi))\\
		&=\frac{1}{4\pi^2}\int_\mathbb{R}e^{i\left(t\phi(\xi)+x\xi\right)} F(\xi,\phi(\xi))|\phi''(\xi)|^\frac{\nu}{2} d\xi.
	\end{align*} 
	In particular, choosing $f:\mathbb{R}\rightarrow \mathbb{C}$ such that $\hat{f}(\xi)=|\phi''(\xi)|^\frac{\nu}{4}F(\xi,\phi(\xi))$, we get
	\begin{equation}\label{relation}
		E^\nu_SF(x,t)=\frac{1}{4\pi^2}\mathcal{W}_\frac{\nu}{2}(t) f(x)
	\end{equation} and therefore, the Strichartz estimates (\ref{S1}) actually reduces to the 
	following restriction estimate.
	\begin{theorem}\label{R1}
		Suppose $\phi\in\mathcal{A}$, $\nu\in [0,1]$ and $(p,q)=\left(\frac{2}{1-\nu},\frac{4}{\nu}\right)$.
		Let $S=\{(\xi, \phi(\xi): \xi\in\mathbb{R}\}$
		be the noncompact hypersurface of $\mathbb{R}^2$ endowed with the measure $d\sigma_\nu(\xi,\phi(\xi))=\frac{|\phi''(\xi)|^\frac{\nu}{2}}{\sqrt{1+|\phi'(\xi)|^2}}d\xi$. Then for any $F\in L^2(S,d\sigma_\nu)$,
		\begin{equation*}
			\|E^\nu_SF\|_{L^q(\mathbb{R}, L^p(\mathbb{R}))}\lesssim \|F\|_{L^2(S,d\sigma_\nu)}.
		\end{equation*}
	\end{theorem}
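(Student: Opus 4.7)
The plan is to derive Theorem \ref{R1} directly from the single-function Strichartz estimate of Theorem \ref{SS1}, using the identification \eqref{relation} between the extension operator $E^\nu_S$ and the dispersive operator $\mathcal{W}_{\nu/2}(t)$. Given $F\in L^2(S,d\sigma_\nu)$, I would define $f:\mathbb{R}\to\mathbb{C}$ through its Fourier transform by setting
$$\hat f(\xi) = |\phi''(\xi)|^{\nu/4} F(\xi,\phi(\xi)),$$
which is precisely the choice used in \eqref{relation}. With this convention, the computation leading to \eqref{relation} gives $E^\nu_S F(x,t) = \frac{1}{4\pi^2}\mathcal{W}_{\nu/2}(t)f(x)$, and applying Theorem \ref{SS1} at the exponent pair $(p,q)=(\tfrac{2}{1-\nu},\tfrac{4}{\nu})$ immediately yields
$$\|E^\nu_S F\|_{L^q(\mathbb{R},L^p(\mathbb{R}))} \lesssim \|f\|_{L^2(\mathbb{R})}.$$

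The remaining task is to bound $\|f\|_{L^2(\mathbb{R})}$ by $\|F\|_{L^2(S,d\sigma_\nu)}$. By Plancherel,
$$\|f\|_{L^2(\mathbb{R})}^2 = \int_{\mathbb{R}} |\phi''(\xi)|^{\nu/2}\,|F(\xi,\phi(\xi))|^2\,d\xi,$$
whereas
$$\|F\|_{L^2(S,d\sigma_\nu)}^2 = \int_{\mathbb{R}} \frac{|\phi''(\xi)|^{\nu/2}}{\sqrt{1+|\phi'(\xi)|^2}}\,|F(\xi,\phi(\xi))|^2\,d\xi.$$

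The main obstacle is the Jacobian discrepancy $(1+|\phi'|^2)^{-1/2}$ between these two integrals. The desired bound $\|f\|_{L^2(\mathbb{R})}\lesssim \|F\|_{L^2(S,d\sigma_\nu)}$ holds as soon as $|\phi'|$ is uniformly bounded, which I expect to be part of the growth hypotheses defining the class $\mathcal{A}$ (see Definition \ref{Definition}). Should this not be directly provided, a clean remedy is to redefine
$$\hat f(\xi) = (1+|\phi'(\xi)|^2)^{-1/4}\,|\phi''(\xi)|^{\nu/4}\,F(\xi,\phi(\xi)),$$
so that $\|f\|_{L^2(\mathbb{R})} = \|F\|_{L^2(S,d\sigma_\nu)}$ holds by construction. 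The bounded factor $(1+|\phi'|^2)^{1/4}$ that then appears inside the oscillatory integral representing $E^\nu_S F$ can be absorbed as a bounded multiplier; since the dispersive decay of the kernel of $\mathcal{W}_{\nu/2}(t)$ used in Theorem \ref{SS1} is not destroyed by such a weight, the same complex-interpolation and $TT^*$ argument underlying Theorem \ref{SS1} carries through, completing the reduction.
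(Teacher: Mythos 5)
Your overall strategy is the same as the paper's: set $\hat f(\xi)=|\phi''(\xi)|^{\nu/4}F(\xi,\phi(\xi))$, invoke the identity \eqref{relation} to write $E^\nu_SF=c\,\mathcal W_{\nu/2}(t)f$, apply Theorem~\ref{SS1}, and control $\|f\|_{L^2(\mathbb R)}$ by $\|F\|_{L^2(S,d\sigma_\nu)}$. In Section~\ref{sec7} the paper simply asserts the equality $\|f\|_{L^2(\mathbb R)}=\|F\|_{L^2(S,d\sigma_\nu)}$ for that choice of $f$, which is consistent only if the factor $1/\sqrt{1+|\phi'|^2}$ in $d\sigma_\nu$ is read as a density with respect to arc length on $S$ (so that the Jacobian $\sqrt{1+|\phi'|^2}$ cancels); with that convention both \eqref{relation} and the norm identity hold and the reduction is one line.

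You instead read $d\sigma_\nu$ literally as the measure $\frac{|\phi''|^{\nu/2}}{\sqrt{1+|\phi'|^2}}\,d\xi$ on $\xi$-space, and you are right that this leaves a discrepancy of $(1+|\phi'|^2)^{-1/2}$ between $\|f\|_{L^2}^2$ and $\|F\|_{L^2(S,d\sigma_\nu)}^2$. However your proposed fix does not close the gap. If you set $\hat f(\xi)=(1+|\phi'(\xi)|^2)^{-1/4}|\phi''(\xi)|^{\nu/4}F(\xi,\phi(\xi))$ so that $\|f\|_{L^2}=\|F\|_{L^2(S,d\sigma_\nu)}$ exactly, then expressing $E^\nu_SF$ in terms of $f$ forces the weight $(1+|\phi'(\xi)|^2)^{1/4}$ into the oscillatory integral, and this weight is \emph{not} bounded for general $\phi\in\mathcal A$ (for instance $\phi(\xi)=\xi^2$ is admissible and has $\phi'$ unbounded; more generally the class $\mathcal A$ permits $\phi''\sim|\xi|^{\alpha-2}$ near infinity, hence $\phi'\sim|\xi|^{\alpha-1}$). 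So the claim that it is a ``bounded multiplier'' that ``can be absorbed'' is false, and the dispersive decay of Lemma~\ref{decay-1} would have to be re-established from scratch for the weighted kernel — a nontrivial task that your proposal does not carry out.

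Moreover, notice that the correct remedy, if one insists on the literal reading of the measure, is both cleaner and requires no modification of $\mathcal W_{\nu/2}$ at all. Take $\hat g(\xi)=\frac{|\phi''(\xi)|^{\nu/4}}{\sqrt{1+|\phi'(\xi)|^2}}\,F(\xi,\phi(\xi))$, i.e.\ with the full power $-1/2$ rather than $-1/4$. Then the identity $E^\nu_SF=c\,\mathcal W_{\nu/2}(t)g$ holds exactly with the literal $d\sigma_\nu$, and
\begin{equation*}
\|g\|_{L^2(\mathbb R)}^2=\int_{\mathbb R}\frac{|\phi''(\xi)|^{\nu/2}}{1+|\phi'(\xi)|^2}\,|F(\xi,\phi(\xi))|^2\,d\xi
\le\int_{\mathbb R}\frac{|\phi''(\xi)|^{\nu/2}}{\sqrt{1+|\phi'(\xi)|^2}}\,|F(\xi,\phi(\xi))|^2\,d\xi=\|F\|_{L^2(S,d\sigma_\nu)}^2,
\end{equation*}
since $(1+|\phi'|^2)^{-1}\le(1+|\phi'|^2)^{-1/2}$. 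The Jacobian factor therefore works in your favor, and Theorem~\ref{SS1} applies directly without touching the operator. Your instinct about where the issue lies was correct, but the specific $-1/4$ power breaks the $\mathcal W_{\nu/2}$ identity, introduces an unbounded weight, and leaves an unjustified interpolation step.
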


	One of the classical problems in harmonic analysis is the so-called restriction problem. The classical restriction problem   is the following: Let $S\subset\R^d, d\geq 2$ be a hypersurface endowed with its $(d-1)$-dimensional Lebesgue measure $d\mu,$ then one may ask the following:   For what exponents $1 \leq p \leq 2$, do we have the inequality
	\begin{equation}\label{Restriction1}
		\int_S|\hat{f}(\xi)|^2 d\mu(\xi)\leq C \|f\|^2_{L^p(\R^d)}?
	\end{equation}
	Here  $\hat{f}$ denotes the Fourier transform for a Schwartz class function $f$ on $\R^d$ and  is defined as
	$$\hat{f}(\xi)=\frac{1}{(2\pi)^{n/2}}\int_{\R^d}f(x)e^{-ix\cdot\xi}dx, \quad \xi \in \mathbb{R}^d.$$

	The restriction phenomena were first introduced by E. M. Stein for studying the boundedness of the Fourier transform of an $L^p$-function in the Euclidean space $\mathbb{R}^d$ for some $d\geq2$. It has a delicate connection to many other conjectures, notably the Kakeya and Bochner-Riesz conjectures. Furthermore, from \cite{RS} we see that it is also closely related to that of estimating solutions to linear PDE such as the wave and Schr\"odinger equations.  
	
	Let   $R_S$ be the operator defined as $R_Sf=\hat{f}|_S$, known as the restriction operator. Then the restriction problem  (\ref{Restriction1}) is equivalent to the boundedness of $R_S: L^p(\R^d) \to L^2(S).$ We also denote the operator $E_S$  (as the dual of the restriction operator $R_S$) is defined as
	$$E_S f(x)=\frac{1}{(2\pi)^{d/2}}\int_S f(\xi)e^{i\xi\cdot x} d\mu(\xi), \quad x\in\R^d,$$ for $f\in L^1(S).$
	If  $q$ is the conjugate exponent of  $p$, i.e., $\frac{1}{p}+\frac{1}{q}=1,$ then by duality, (\ref{Restriction1})   can also be posed  for  $d\geq 2$ as follows:		For what exponent $2\leq q\leq \infty,$ is it true that for all $f\in L^2(S)$, we have  
	\begin{align}\label{11}
		\|E_S f\|_{L^q(\R^d)}\leq C\|f\|_{L^2(S)}?\end{align}

	The case in which the restriction problem is often considered in the literature is $q=2$.  From the literature, it is well known that there are only two types of surfaces for which the restriction problem (\ref{Restriction1}) (or (\ref{11}))  has been settled completely. Namely, for smooth compact surfaces with non-zero Gauss curvature and quadratic surfaces.   For smooth compact surfaces,   Stein-Tomas theorem  \cite{ES, PT} asserts that the restriction conjecture  (\ref{Restriction1}) holds for all $1\leq p\leq \frac{2(d+1)}{d+3}.$  For  quadratic surfaces such as paraboloid-like, cone-like, or sphere-like,  Strichartz gave a complete characterization depending on surfaces in \cite{RS}. We refer to the excellent review by Tao \cite{tao} for a complete study on the restriction problems.

	We can see that (\ref{11}) is the restriction estimate for a single function, and as we discussed earlier, our focus is to generalize it for a system of orthonormal functions.  More precisely, for   $\{f_j\}_{j\in J}$ a (possibly infinite) orthonormal system in  $L^2(S)$ and  $\{n_j\}_{j\in J}\subset \C$,  we want to find estimates
	of the form \begin{equation}\label{rec}
		\left\|\sum_{j\in J}n_j|E_S f_j|^2\right\|_{L^{q'/2}(\R^d)}\leq C \|\{n_j\}_{j\in J}\|_{\ell^\lambda},
	\end{equation}
	for some $1\leq q<2, \lambda>1$ with $C>0$ independent of $\{n_j\}_{j\in J}$ and $\{f_j\}_{j\in J}$ and the value of $\lambda$ depending on the surface $S$.  In this direction, our next main results of this paper are the extension of Strichartz estimates in Theorem \ref{SS1} and restriction estimate in Theorem \ref{R1} for a system of orthonormal functions as follows.  
	\begin{theorem}[Orthonormal Strichartz estimates]\label{real line}
		Let $\phi\in\mathcal{A}$, $\nu\in [\frac{2}{3},1)$ and $(p,q)=\left(\frac{1}{1-\nu},\frac{2}{\nu}\right)$. 
		For any (possibly infinite) orthonormal system $\{f_j\}_{j\in J}$ in $L^2(\mathbb{R})$ and any sequence $\{n_j\}_{j\in J}$ in $\mathbb{C}$, we have
		\begin{equation*}
			\left\|\sum_{j\in J}n_j|\mathcal{W}_\frac{\nu}{2}(t)f_j|^2\right\|_{L^q(\mathcal{\mathbb{R}},L^p(\mathbb{R}))} \leq C\|\{n_j\}_{j\in J}\|_{\ell^\frac{2p}{p+1}},
		\end{equation*}
		where $C>0$ is independent of $\{f_j\}_{j\in J}$ and $\{n_j\}_{j\in J}$.
	\end{theorem}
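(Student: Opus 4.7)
The plan is to proceed via the Frank-Sabin duality scheme \cite{FS}: convert the orthonormal square-function inequality into an equivalent Schatten-class bound on a sandwiched operator, and then establish that Schatten bound by complex interpolation between the single-function Strichartz estimate (Theorem \ref{SS1}) and a Hilbert-Schmidt endpoint derived from the dispersive kernel estimate of Lemma \ref{decay-1}.

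Letting $M_{W}$ denote multiplication by a test function $W=W(t,x)$, the duality principle of Frank-Sabin shows that the claimed inequality with sequence norm $\ell^{2p/(p+1)}$ is equivalent to
\begin{equation*}
\bigl\|M_{W}\,\mathcal{W}_{\nu/2}(t)\mathcal{W}_{\nu/2}(s)^{*}\,M_{\overline{W}}\bigr\|_{\mathfrak{S}^{2p/(p-1)}(L^{2}(\mathbb{R}_{t}\times\mathbb{R}_{x}))}\lesssim \|W\|_{L^{2q'}_{t}L^{2p'}_{x}}^{2}.
\end{equation*}
I would prove this Schatten bound by Stein complex interpolation applied to a suitable analytic family on the strip $0\leq\mathrm{Re}\,z\leq 1$, with two endpoint estimates. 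At the operator-norm ($\mathfrak{S}^{\infty}$) endpoint, a standard $TT^{*}$ argument combined with Theorem \ref{SS1} (in the dual Strichartz form $\mathcal{W}_{\nu/2}(s)^{*}:L^{q'_{0}}_{t}L^{p'_{0}}_{x}\to L^{2}_{x}$) and Hölder's inequality in the weight $W$ yields the required operator-norm bound by $\|W\|_{L^{2q'_{0}}_{t}L^{2p'_{0}}_{x}}^{2}$. At the Hilbert-Schmidt ($\mathfrak{S}^{2}$) endpoint, the identity
\begin{equation*}
\|M_{W}\mathcal{W}_{\nu/2}(t)\mathcal{W}_{\nu/2}(s)^{*}M_{\overline{W}}\|_{\mathfrak{S}^{2}}^{2}=\iiiint |W(t,x)|^{2}|K(t-s,x-y)|^{2}|W(s,y)|^{2}\,dt\,dx\,ds\,dy,
\end{equation*}
where $K$ is the kernel of $\mathcal{W}_{\nu/2}(t)\mathcal{W}_{\nu/2}(s)^{*}$, reduces the estimate to Young's inequality in the spatial variable combined with the one-dimensional Hardy-Littlewood-Sobolev inequality in time, powered by the dispersive control on $K$ furnished by Lemma \ref{decay-1}.

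Interpolating the two endpoints at $\theta=(p-1)/p$ lands precisely in $\mathfrak{S}^{2p/(p-1)}$, which by the duality of the first step is the desired orthonormal inequality. The main obstacle I anticipate is the admissibility of the Hardy-Littlewood-Sobolev step at the Hilbert-Schmidt endpoint: the time singularity of $|K|^{2}$ is integrable against the relevant Lebesgue spaces in $t$ precisely when $p\geq 3$, which is the translation of the hypothesis $\nu\in[2/3,1)$, so the stated lower bound on $\nu$ is forced by this endpoint. A secondary technical point will be verifying that the chosen analytic family fits into Stein's interpolation framework with uniform-in-$\mathrm{Im}\,z$ control of both the Strichartz estimate and the dispersive kernel bound along the two sides of the strip.
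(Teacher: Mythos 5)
Your overall skeleton is right: reduce via the Frank--Sabin duality principle to the Schatten bound
\begin{equation*}
\left\|W_1\,\mathcal{W}_{\nu/2}(t)\mathcal{W}_{\nu/2}(s)^*\,W_2\right\|_{\mathfrak{S}^{2/\nu}(L^2_tL^2_x)}\lesssim \|W_1\|_{L^{\frac{4}{2-\nu}}_tL^{\frac{2}{\nu}}_x}\|W_2\|_{L^{\frac{4}{2-\nu}}_tL^{\frac{2}{\nu}}_x},
\end{equation*}
and get it by Stein complex interpolation along an analytic family whose interpolation point $z=\nu$ recovers the original operator, with the Hilbert--Schmidt side at $\mathrm{Re}\,z=1$ controlled by Lemma \ref{decay-1} and HLS. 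That part matches the paper, and your diagnosis that HLS forces $\nu\geq 2/3$ is correct.

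The gap is at the $\mathfrak{S}^\infty$ endpoint, and it is not a small one. You propose to obtain the operator-norm bound from the single-function Strichartz estimate (Theorem \ref{SS1}) combined with H\"older in $W$. That argument does give an $\mathfrak{S}^\infty$ bound \emph{for the original operator} $\mathcal{T}_\nu=\mathcal{W}_{\nu/2}(t)\mathcal{W}_{\nu/2}(s)^*$, with $W\in L^{\frac{4}{2-\nu}}_tL^{\frac{2}{\nu}}_x$. But two things go wrong. First, in the analytic family $T_{z,\varepsilon}$ with kernel $t^{-1+z/\nu}\chi_{|t|>\varepsilon}\,I_z(x,t)$ (which is what one needs so that $T_\nu=\mathcal{T}_\nu$ and the $\mathrm{Re}\,z=1$ edge has the $|t|^{-1/2}$ decay of Lemma \ref{decay-1}), the operator on the edge $\mathrm{Re}\,z=0$ is \emph{not} $\mathcal{T}_\nu$: its kernel is $t^{-1+i\mathrm{Im}(z)/\nu}I_{i\mathrm{Im}(z)}(x,t)$. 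Theorem \ref{SS1} says nothing about this modified operator. Second, even if one were allowed to use the Strichartz-derived $\mathfrak{S}^\infty$ estimate at one edge, the interpolation arithmetic does not close: interpolating $\mathfrak{S}^\infty$ with $W\in L^{\frac{4}{2-\nu}}_tL^{\frac{2}{\nu}}_x$ against $\mathfrak{S}^2$ with $W\in L^{\frac{4\nu}{2-\nu}}_tL^{2}_x$ at the parameter $\theta=\nu$ (the value forced by hitting $\mathfrak{S}^{2/\nu}$) produces the time exponent $\frac{1}{q_\theta}=\frac{(2-\nu)^2}{4}$ rather than the required $\frac{2-\nu}{4}$.

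What the paper actually does at $\mathrm{Re}\,z=0$ is to prove that $T_{z,\varepsilon}$ is bounded on $L^2_tL^2_x$ with a constant growing at most exponentially in $\mathrm{Im}(z)$. Concretely, since $\mathcal{W}_{z}(t)$ is unitary on $L^2(\mathbb{R})$ when $\mathrm{Re}\,z=0$, one conjugates out the unitary group and reduces $T_{z,\varepsilon}$ to the one-dimensional operator $h\mapsto \int_{|s|>\varepsilon}s^{-1+i\mathrm{Im}(z)/\nu}h(t-s)\,ds$, which is a (modulated) Hilbert transform and therefore $L^2_t\to L^2_t$. This yields the $\mathfrak{S}^\infty$ bound with $W\in L^\infty_tL^\infty_x$ --- and it is precisely this $L^\infty L^\infty$ endpoint, not the Strichartz-derived one, that makes the interpolation at $\theta=\nu$ land on the target pair $\bigl(L^{\frac{4}{2-\nu}}_t,\,L^{\frac{2}{\nu}}_x\bigr)$ in $\mathfrak{S}^{2/\nu}$. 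You should replace your $\mathfrak{S}^\infty$ step with this Hilbert-transform argument; the rest of your outline then goes through.
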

	
	\begin{theorem}[Orthonormal restriction estimates]\label{orthonormal-restriction}
		Suppose $\phi\in\mathcal{A}$, $\nu\in [0,1]$ and $(p,q)=\left(\frac{2}{1-\nu},\frac{4}{\nu}\right)$.
		Let $S=\{(\xi, \phi(\xi): \xi\in\mathbb{R}\}$
		be the noncompact hypersurface of $\mathbb{R}^2$ endowed with the measure $d\sigma_\nu(\xi,\phi(\xi))=\frac{|\phi''(\xi)|^\frac{\nu}{2}}{\sqrt{1+|\phi'(\xi)|^2}}d\xi$. 
		For any (possibly infinite) orthonormal system $\{F_j\}_{j\in J}$ in $L^2(S, d\sigma_\nu)$ and any sequence $\{n_j\}_{j\in J}$ in $\mathbb{C}$, we have
		\begin{equation*}
			\left\|\sum_{j\in J}n_j|E^\nu_SF_j|^2\right\|_{L^q(\mathcal{\mathbb{R}},L^p(\mathbb{R}))} \leq C\|\{n_j\}_{j\in J}\|_{\ell^\frac{2p}{p+1}},
		\end{equation*}
		where $C>0$ is independent of $\{F_j\}_{j\in J}$ and $\{n_j\}_{j\in J}$.
	\end{theorem}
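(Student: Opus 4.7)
Plan. The strategy is to reduce Theorem \ref{orthonormal-restriction} to an orthonormal Strichartz estimate of Kenig--Ponce--Vega type for a slightly modified dispersive operator $\widetilde{\mathcal{W}}_{\nu/2}(t)$ via the identity \eqref{relation}, and then to establish that Strichartz estimate through the Frank--Sabin duality principle together with the dispersive kernel decay (in the spirit of Lemma \ref{decay-1}).

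First I would set $\hat{g}_j(\xi) = |\phi''(\xi)|^{\nu/4}(1+|\phi'(\xi)|^2)^{-1/4}F_j(\xi,\phi(\xi))$, so that $\{g_j\}$ is orthonormal in $L^2(\mathbb{R})$ exactly when $\{F_j\}$ is orthonormal in $L^2(S,d\sigma_\nu)$. A direct computation using the definition of $E^\nu_S$ and the form of $d\sigma_\nu$ then shows that
\begin{equation*}
E^\nu_S F_j(x,t) = \frac{1}{4\pi^2}\widetilde{\mathcal{W}}_{\nu/2}(t)g_j(x),
\end{equation*}
where $\widetilde{\mathcal{W}}_{\nu/2}(t)$ is the dispersive operator with Fourier symbol $|\phi''(\xi)|^{\nu/4}(1+|\phi'(\xi)|^2)^{-1/4}e^{it\phi(\xi)}$. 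Because the extra factor $(1+|\phi'|^2)^{-1/4}$ is bounded by one and smooth, inspection of the stationary-phase argument behind Lemma \ref{decay-1} yields the same $|t|^{-1/2}$ dispersive decay for the kernel of $\widetilde{\mathcal{W}}_{1/2}(t)$ at the endpoint $\nu=1$.

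The theorem thus reduces to proving the orthonormal Strichartz estimate
\begin{equation*}
\Big\|\sum_{j\in J} n_j |\widetilde{\mathcal{W}}_{\nu/2}(t) g_j|^2\Big\|_{L^q(\mathbb{R}, L^p(\mathbb{R}))} \leq C \|\{n_j\}_{j\in J}\|_{\ell^{2p/(p+1)}}
\end{equation*}
at the Kenig--Ponce--Vega exponents $(p,q)=\left(\frac{2}{1-\nu},\frac{4}{\nu}\right)$, for $\{g_j\}$ orthonormal in $L^2(\mathbb{R})$. By the Frank--Sabin duality principle \cite{FS}, this is equivalent to the Schatten-class bound $\|W \widetilde{\mathcal{W}}_{\nu/2}(t)\|_{\mathfrak{S}^{4p/(p-1)}} \leq C \|W\|_{L^{2q'}(\mathbb{R},L^{2p'}(\mathbb{R}))}$ for arbitrary weights $W$. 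This Schatten bound is in turn established by Stein's complex interpolation on the analytic family $\widetilde{\mathcal{W}}_{z/2}(t)$ for $\mathrm{Re}\,z \in [0,1]$: at $\mathrm{Re}\,z = 0$ Plancherel gives the Schatten-$\infty$ (operator-norm) endpoint, while at $\mathrm{Re}\,z = 1$ the dispersive kernel decay combined with a Hilbert--Schmidt integration in $W$ gives the Schatten-$2$ endpoint. Transferring back through the Frank--Sabin equivalence and then through the change of variables of Step~1 delivers the orthonormal restriction estimate.

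The main obstacle will be the complex interpolation step: one must verify subexponential growth of the endpoint Schatten norms in $|\mathrm{Im}\,z|$, uniformly in $W$ and in the time parameter, so that Stein's theorem applies and gives the intermediate Schatten-$\frac{4p}{p-1}$ bound. A secondary concern is to check that the extra weight $(1+|\phi'|^2)^{-1/4}$ inherited from $d\sigma_\nu$ does not disrupt either the Plancherel bound at $\mathrm{Re}\,z = 0$ or the stationary-phase kernel estimate at $\mathrm{Re}\,z = 1$; boundedness and smoothness of this factor, together with the assumption $\phi \in \mathcal{A}$, should make this a careful but routine verification.
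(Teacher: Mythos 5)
Your route differs from the paper's, which proves Theorem \ref{orthonormal-restriction} in one line: since $\hat f(\xi)=|\phi''(\xi)|^{\nu/4}F(\xi,\phi(\xi))$ is declared to satisfy $\|f\|_{L^2(\mathbb{R})}=\|F\|_{L^2(S,d\sigma_\nu)}$, the map $F\mapsto f$ is an isometry carrying orthonormal systems in $L^2(S,d\sigma_\nu)$ to orthonormal systems in $L^2(\mathbb{R})$, and then identity \eqref{relation} plus Theorem \ref{real line} give the restriction estimate immediately. You instead introduce a modified operator $\widetilde{\mathcal{W}}_{\nu/2}$ carrying the extra multiplier $(1+|\phi'|^2)^{-1/4}$ and propose to rerun the entire duality/Stein-interpolation machinery. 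This is more work than necessary even under your reading of the measure: writing $\widetilde{\mathcal{W}}_{\nu/2}(t)=\mathcal{W}_{\nu/2}(t)\circ M$ with $M$ the Fourier multiplier by $(1+|\phi'|^2)^{-1/4}$, one has $\|M\|_{L^2\to L^2}\le 1$, so with $\gamma=\sum_j n_j|g_j\rangle\langle g_j|$ the operator $M\gamma M^*$ satisfies $\|M\gamma M^*\|_{\mathfrak{S}^\lambda}\le\|\gamma\|_{\mathfrak{S}^\lambda}$; the density-operator form \eqref{7} of Theorem \ref{real line} applied to $M\gamma M^*$ then yields the estimate for $\widetilde{\mathcal{W}}_{\nu/2}$ with no new dispersive estimate required. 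That said, your instinct about the $\sqrt{1+|\phi'|^2}$ factor detects a real imprecision: the paper's displayed computation of $E^\nu_S F$ drops that factor when passing from $d\sigma_\nu$ to $d\xi$, which is consistent only if $d\sigma_\nu$ is understood as already absorbing the surface Jacobian; your version is the one internally consistent with the literal formula for $d\sigma_\nu$.

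There is, however, a more basic gap that your proposal inherits from the statement itself and does not resolve. Theorem \ref{real line} gives the orthonormal bound with $(p,q)=\bigl(\frac{1}{1-\nu},\frac{2}{\nu}\bigr)$ and only for $\nu\in[\frac{2}{3},1)$, the range coming from the Hardy--Littlewood--Sobolev step at the $\mathfrak{S}^2$ endpoint (the kernel decays like $|t|^{-3/2+1/\nu}$, and HLS needs the exponent in $(0,1)$). The pair $(p,q)=\bigl(\frac{2}{1-\nu},\frac{4}{\nu}\bigr)$ and range $\nu\in[0,1]$ appearing in Theorem \ref{orthonormal-restriction} are the single-function Kenig--Ponce--Vega exponents from Theorem \ref{SS1}, not the orthonormal ones. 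Whether you transfer via the isometry (as the paper does) or rerun the duality/interpolation argument for $\widetilde{\mathcal{W}}_{\nu/2}$ (as you propose), you will land on the orthonormal exponents and the restricted range, not on the statement as printed; the stated exponents and range cannot be obtained this way. So as written your program proves a corrected version of the theorem, not the literal one, and you should make the correction explicit rather than inherit the mismatch. Finally, if you do go the route of redoing the dispersive estimate, the claim that the extra factor yields the same $|t|^{-1/2}$ decay ``by inspection'' is not automatic: the KPV argument hinges on $|\phi''|^{z/2}$ compensating the phase degeneracy, and you would need to check that multiplying by $(1+|\phi'|^2)^{-1/4}$ (bounded and, for $\phi\in\mathcal{A}$, piecewise monotone with finitely many pieces) is compatible with the dyadic/van der Corput argument; the contraction observation above lets you avoid this entirely.
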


	Coming back to the global smoothing effect, the authors in  \cite{Kenig-Ponce-Vega} also extended  $ \mathcal{W}_\nu(t) $ defined in  (\ref{higher}) into the higher dimension $\mathbb{R}^d$.  The first problem in the higher dimension was to find the substitute for the second derivative that appeared in the dimension case; however, as explained in \cite{Kenig-Ponce-Vega}, the stationary phase lemma suggests that this substitute should be the Hessian of the phase function. For $\nu\geq 0$, they considered the operator 
	\begin{equation*}
		\mathcal{W}_\nu(t) f(x)=\int_{\mathbb{R}^d} e^{i\left(t P(\xi)+x\cdot\xi\right)}|HP(\xi)|^\frac{\nu}{2} \hat{f}(\xi) \psi_P(\xi)d\xi,
	\end{equation*}
	for a    suitable cut-off function $\psi_P$ and proved the Strichartz type estimates
	\begin{align}\label{S2}
		\|\mathcal{W}_\frac{\nu}{2}(t) f\|_{L^q(\mathbb{R},L^p(\mathbb{R}^d))}&\leq C\|f\|_{L^2(\mathbb{R}^d)}. 
	\end{align}
	for   $0\leq \nu<\frac{2}{d}$ and $(p,q)=\left(\frac{2}{1-\nu},\frac{4}{d\nu}\right)$ and under some assumption on the polynomial $P$, where     $HP=\det(\partial_{ij}^2P)_{i,j}$ denotes   the determinant of	the Hessian matrix of $P$.  	   In fact, if $P$ is a real polynomial, it is not easy to see what the Hessian of $P$ looks like, but,  things will be simple if we consider    $P$  to be a a real elliptic polynomial.    Note that 
	\begin{itemize}
		\item if $u_0 \in L^1\left(\mathbb{R}^d\right)$,      $\frac{n}{2}(m-2)$ derivatives of    $u$ in $L^{\infty}\left(\mathbb{R}^n\right)$,  
		\item  if $u_0 \in L^2\left(\mathbb{R}^d\right)$,    $\frac{d}{2}(m-2)\left(\frac{1}{2}-\frac{1}{p}\right)$ derivatives  of $u$ in  $L^p\left({\R}^d\right)$ with $0 \leq \frac{1}{2}-\frac{1}{p}<\frac{2}{d}$.
	\end{itemize} 
	Our next aim in this paper is to extend the estimates  (\ref{S2}) for a system of orthonormal functions as follows.  	\begin{theorem}\label{Higher dim1}
		Let $P$ be a real elliptic polynomial of $m\geq2$ or $P(\xi)=\sum\limits_{j=1}^d P_j(\xi_j)$, where $P_j$ is an arbitrary real polynomial of one variable with degree $P_j\geq2$ for all $1\leq j\leq d$. Assume $\nu\in [\frac{2}{d+2},\frac{2}{d+1})$ and $(p,q)=\left(\frac{1}{1-\nu},\frac{2}{d\nu}\right)$. For any (possibly infinite) orthonormal system $\{f_j\}_{j\in J}$ in $L^2(\mathbb{R}^d)$ and any sequence $\{n_j\}_{j\in J}$ in $\mathbb{C}$, we have
		\begin{equation*}
			\left\|\sum_{j\in J}n_j|\mathcal{W}_\frac{\nu}{2}(t)f_j|^2\right\|_{L^q(\mathcal{\mathbb{R}},L^p(\mathbb{R}^d))} \leq C\|\{n_j\}_{j\in J}\|_{\ell^\frac{2p}{p+1}},
		\end{equation*}
		where $C>0$ is independent of $\{f_j\}_{j\in J}$ and $\{n_j\}_{j\in J}$.
	\end{theorem}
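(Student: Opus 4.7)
The plan is to adapt the Frank-Sabin duality argument used in the proof of the one-dimensional case (Theorem \ref{real line}) to the $d$-dimensional setting, with the stationary phase analysis replaced by its multivariable counterpart appropriate to the two classes of polynomials $P$ allowed in the hypothesis. The three main ingredients are: the single-function Strichartz estimate \eqref{S2}, a uniform $L^\infty$ dispersive estimate for the kernel of $\mathcal{W}_\nu(t)\mathcal{W}_\nu(s)^*$, and Stein's complex interpolation theorem on the Schatten scale.

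First, I would apply the Frank-Sabin duality principle of \cite{FS} to reformulate the orthonormal inequality as an equivalent Schatten-class bound. A direct computation with $(p,q)=(1/(1-\nu),2/(d\nu))$ and $\alpha=2p/(p+1)=2/(2-\nu)$ identifies this equivalent formulation as
\[
\big\|\bar W\,\mathcal{W}_{\nu/2}(t)\big\|_{\mathfrak{S}^{4/\nu}(L^2(\mathbb{R}^d)\to L^2(\mathbb{R}\times\mathbb{R}^d))} \lesssim \|W\|_{L^{4/(2-d\nu)}_t L^{2/\nu}_x},
\]
valid for every space-time weight $W(x,t)$, where the operator acts by $f\mapsto \bar W(x,t)\,\mathcal{W}_{\nu/2}(t)f(x)$.

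Second, I would embed $\mathcal{W}_{\nu/2}$ into the analytic family
\[
T_z f(x,t) = \int_{\mathbb{R}^d} e^{i(tP(\xi)+x\cdot\xi)}\,|HP(\xi)|^{z/2}\,\hat f(\xi)\,\psi_P(\xi)\,d\xi,
\]
and establish endpoint Schatten bounds for $\bar W\,T_z$ at the two boundary lines of a strip containing $z=\nu/2$. On $\operatorname{Re}(z)=0$ the symbol $|HP|^{z/2}$ is unimodular, so Plancherel together with H\"older yields an $\mathfrak{S}^\infty$ bound for $\bar W T_{iy}$ with $W$ in a mixed Lebesgue space. On $\operatorname{Re}(z)=1$ the kernel of $T_zT_z^*$ reads
\[
K(x,t;y,s) = \int_{\mathbb{R}^d} e^{i(t-s)P(\xi)+i(x-y)\cdot\xi}\,|HP(\xi)|\,|\psi_P(\xi)|^2\,d\xi,
\]
and I would prove $|K(x,t;y,s)|\lesssim |t-s|^{-d/2}$ uniformly in $(x,y)$ by stationary phase in $\xi$. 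Combining this pointwise decay with the Kato-Seiler-Simon inequality and the one-dimensional Hardy-Littlewood-Sobolev inequality in the time variable then produces the corresponding Schatten bound on $\bar W T_1$. Finally, Stein's interpolation evaluated at $z=\nu/2$ merges these two endpoints into the target Schatten estimate displayed above.

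The main obstacle is the multivariable dispersive kernel estimate $|K|\lesssim |t-s|^{-d/2}$ in the two distinct cases covered by the hypothesis. For a real elliptic polynomial $P$ of degree $m\ge 2$, the Hessian determinant $HP$ is comparable to $|\xi|^{d(m-2)}$ at infinity and in particular non-vanishing on a well-chosen support of $\psi_P$, so classical multivariate stationary phase (e.g.\ H\"ormander's lemma) yields the claimed decay directly. For a separable polynomial $P(\xi)=\sum_{j=1}^d P_j(\xi_j)$, the phase integral factorises as a tensor product of one-dimensional oscillatory integrals and $HP=\prod_j P_j''(\xi_j)$, so each one-dimensional factor has decay $|t-s|^{-1/2}$ by the van der Corput lemma exactly as in the proof of Theorem \ref{real line}; multiplying these yields the desired $|t-s|^{-d/2}$ bound. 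Once this dispersive estimate is secured in both cases, the remainder of the proof is a routine higher-dimensional transcription of the Frank-Sabin interpolation already carried out for Theorem \ref{real line}.
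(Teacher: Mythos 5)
Your overall skeleton matches what the paper does: reduce by the Frank--Sabin duality to a Schatten bound for a weighted version of the evolution, set up a Stein analytic family, and close the interpolation using the pointwise dispersive decay established separately for elliptic and separable polynomials (Lemmas~\ref{decay-d-1} and~\ref{decay-d-2}). The duality reformulation you give (Schatten exponent $4/\nu$, weight in $L^{4/(2-d\nu)}_t L^{2/\nu}_x$) is correct, and your identification of the $|t-s|^{-d/2}$ decay as the crux, with stationary phase for the elliptic case and a one--dimensional tensor argument for the separable case, is exactly what the paper proves in those lemmas.

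However, there is a genuine gap in your interpolation scheme. You write the analytic family as
\[
T_z f(x,t) = \int_{\mathbb{R}^d} e^{i(tP(\xi)+x\cdot\xi)}\,|HP(\xi)|^{z/2}\,\hat f(\xi)\,\psi_P(\xi)\,d\xi,
\]
i.e.\ you analytically continue only the power of the Hessian. But the Frank--Sabin argument (and the paper's own proof of Theorem~\ref{real line}, to which you defer) also inserts a power of the time variable: the family one interpolates has kernel $t^{-1+z/\nu}\,\chi_{|t|>\varepsilon}(t)\,I_z(x,t)$. Without the factor $t^{-1+z/\nu}$, your endpoint estimate at $\operatorname{Re}(z)=1$ fails. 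Indeed, when you square the $|t-s|^{-d/2}$ dispersive bound to estimate the Hilbert--Schmidt norm of $\overline{W}\,T_1 T_1^*\,W$, the resulting time singularity is $|t-s|^{-d}$, and the one--dimensional Hardy--Littlewood--Sobolev inequality requires a singularity $|t-s|^{-\sigma}$ with $\sigma\in(0,1)$; for $d\geq 1$ the exponent $\sigma=d$ is out of range, so HLS does not apply and the $\mathfrak{S}^2$ bound you are aiming for is simply false. The role of the extra factor is precisely to soften the time decay at $\operatorname{Re}(z)=1$ to $|t-s|^{-1+1/\nu-d/2}$, which after squaring gives $\sigma = 2-2/\nu+d\in(0,1)$ exactly on the asserted range $\nu\in(2/(d+2),2/(d+1))$ (with the boundary point treated as the trivial bounded-kernel case), and to produce the Hilbert-transform structure at $\operatorname{Re}(z)=0$ that yields the $\mathfrak{S}^\infty$ endpoint. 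A secondary, related issue is that with your one-sided family $\overline{W}\,T_z$ the arithmetic of Stein interpolation does not land on the target exponent $\mathfrak{S}^{4/\nu}$ at $z=\nu/2$ unless the $\operatorname{Re}(z)=1$ endpoint is an $\mathfrak{S}^2$ bound, which is not what your kernel computation produces. The paper avoids both difficulties by interpolating the two-sided family $W_1\,T_{z,\varepsilon}\,W_2$, where $T_{z,\varepsilon}$ carries the $t^{-1+z/\nu}$ weight, between $\mathfrak{S}^\infty$ at $\operatorname{Re}(z)=0$ and $\mathfrak{S}^2$ at $\operatorname{Re}(z)=1$, evaluating at $z=\nu$.
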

	For the higher dimension, we also can define the restriction problem (in a similar fashion we discussed for the real line) and in this scenario, Theorem  \ref{Higher dim1} can also be seen as an orthonormal restriction theorem for the Fourier extension operator on noncompact hypersurfaces of the form $S=\{(\xi, P(\xi)) : \xi \in \mathbb{R}^{d}  \}$, where  $P$ be a real elliptic polynomial of $m\geq2$ or $P(\xi)=\sum\limits_{j=1}^d P_j(\xi_j)$ and $P_j$'s  are arbitrary real polynomial of one variable with degree $P_j\geq2$ for all $1\leq j\leq d$.

	Apart from the Introduction, the presentation of the paper is the following. 
	\begin{itemize}
		\item In Section \ref{sec2}, we recall complex interpolation in Schatten spaces and a duality principle, which will play a crucial role in order to prove   Strichartz estimates from a single function to an orthonormal system.
		\item In Section \ref{sec3}, we again recall Strichartz inequality for a single function associated with a general self-adjoint operator $L$ on a measure space $X$.
		\item In Section \ref{sec4}, we extend Strichartz estimates from a single function to a system of orthonormal initial data. We also investigate inhomogeneous orthonormal Strichartz estimates. 
		\item In Section \ref{sec5}, we provide some particular examples for the self-adjoint operator $L$ such as  Hermite operators, twisted Laplacians, Laguerre operators, and generalized  $(k, a)$ operator and obtain orthonormal Strichartz estimates.     
		\item In Section \ref{sec6},  we define Hartree equation and investigate global well-posedness for the Hartree equation in Schatten spaces can be obtained as an application of the orthonormal Strichartz inequalities.
		\item Section \ref{sec7} is devoted to study extend the smoothing type estimates proved by Kenig-Ponce-Vega in \cite{Kenig-Ponce-Vega} for a system of orthonormal initial data in one and higher dimensions.  
	\end{itemize}
	
	\section{Preliminaries}\label{sec2}
	\subsection{Schatten class}
	In this subsection, we recall the definition of Schatten spaces. For a detailed study of these spaces, we refer to see \cite{Simon}. Let $\mathcal{H}$ be a complex and separable Hilbert space. Let $T:  \mathcal{H}\rightarrow \mathcal{H}$ be a compact operator and $T^*$ denote the adjoint of $T$. The singular values of $T$ are the non-zero eigenvalues of $|T|:=\sqrt{T^*T}$, which form an at most countable set denoted by $\{s_j(T)\}_{j\in\mathbb{N}}$. For $\lambda\in[1,\infty)$, the Schatten space $\mathfrak{S}^\lambda(\mathcal{H})$ is defined as the space of all compact operators $T$ on $\mathcal{H}$ such that
	\begin{equation*}
		\left(\sum_{j=1}^{\infty}s_j(T)^\lambda\right)^{\frac{1}{\lambda}}<\infty.
	\end{equation*}
	Then the space $\mathfrak{S}^\lambda(\mathcal{H})$ is a Banach space endowed with the Schatten $\lambda$-norm defined by
	\begin{equation*}
		\|T\|_{\mathfrak{S}^\lambda(\mathcal{H})}=\left(\sum_{j=1}^{\infty}s_j(T)^\lambda\right)^{\frac{1}{\lambda}},\, \forall~ T\in \mathfrak{S}^\lambda(\mathcal{H}).
	\end{equation*}
	In general, we denote the space  $\mathfrak{S}^\infty(\mathcal{H})$,  as the space of the compact operators rather than the bounded operators 
	but the norm is the same as for the bounded operators. In other words, for $T\in \mathfrak{S}^\infty(\mathcal{H})$, we denote $\|T\|_{\mathfrak{S}^\infty(\mathcal{H})}$ as the operator norm of $T$. Moreover, we have the property that  $\left(\mathfrak{S}^\infty(\mathcal{H})\right)^*=\mathfrak{S}^1(\mathcal{H})$ and $\left(\mathfrak{S}^1(\mathcal{H})\right)^*=\mathfrak{B}(\mathcal{H},\mathcal{H})$, where $\mathfrak{B}(\mathcal{H},\mathcal{H})$ is the space of all bounded operators on $\mathcal{H}$. Note that, this property is different from the $L^p$ spaces. In particular, when $\lambda=1$, an operator belonging to the space $\mathfrak{S}^1(\mathcal{H})$ is known as a trace class operator; when $\lambda=2$, an operator belonging to the space $\mathfrak{S}^2(\mathcal{H})$ is known as a Hilbert-Schmidt operator. In the proof of our Strichartz estimates for orthonormal systems of initial data, we will see that Schatten spaces arise naturally via   duality argument. 
	
	\subsection{Complex interpolation in Schatten space}\label{Stein} Let $a_0, a_1\in \R$  such that $a_0<a_1$. A family of operators $\{T_z\}$ on $X\times \mathcal{I}$ defined in a strip $a_0\leq Re(z)\leq a_1$ in the complex plane is analytic in the sense of Stein if it has the following properties:
	
	$(1)$ For each $z: a_0\leq Re(z)\leq a_1$, $T_z$ is a linear transformation of simple functions on $X\times \mathcal{I}$.
	
	$(2)$ For all simple functions $F, G$ on $\mathcal{I}\times X$, the map $z\rightarrow \langle G, T_zF\rangle
	$ is analytic in $a_0<Re (z)<a_1$ and continuous in $a_0\leq Re (z)\leq a_1$.
	
	$(3)$ Moreover, $\sup_{a_0\leq\lambda\leq a_1}\left|\langle G, T_{\lambda+is}F\rangle\right|\leq C(s)$ for some $C(s)$ with at most a (double) exponential growth in $s$.
	
	The following complex interpolation result in Schatten space will be used frequently in order to prove our main results.  For a detailed study on complex interpolation theory, one can see   \cite[Proposition 1]{FS}, \cite[Theorem 2.9]{Simon}, and  \cite[Theorem 2.26]{Nguyen}.
	\begin{lemma}\label{Schatten-bound}
		Let $\{T_z\}$ be an analytic family of operators on $X\times \mathcal{I}$ in the sense of Stein defined in the strip $a_0\leq Re (z) \leq a_1$ for some $a_0<a_1$. If there exist $M_0, M_1, b_0, b_1>0$, $1\leq p_0,p_1,q_0,q_1\leq \infty$, and $1\leq r_0,r_1<\infty$ such that    for all simple functions $W_1,W_2$ on $X\times \mathcal{I}$  and  $s\in\mathbb{R}$, we have 
		\begin{equation*}		
			\begin{aligned}
				&
				\left\|W_1T_{a_0+is}W_2\right\|_{\mathfrak{S}^{r_0}\left(L^2(\mathcal{I}, L^2(X))\right)}\leq M_0e^{b_0|s|}\left\|W_1\right\|_{L^{q_0}(\mathcal{I}, L^{p_0}(X))}\left\|W_2\right\|_{L^{q_0}(\mathcal{I}, L^{p_0}(X))}	\end{aligned}
		\end{equation*}	
		and \begin{equation*}
			\begin{aligned}\left\|W_1T_{a_1+is}W_2\right\|_{\mathfrak{S}^{r_1}\left(L^2(\mathcal{I}, L^2(X))\right)}\leq M_1e^{b_1|s|}\left\|W_1\right\|_{L^{q_1}(\mathcal{I}, L^{p_1}(X))}\left\|W_2\right\|_{L^{q_1}(\mathcal{I}, L^{p_1}(X))}.
			\end{aligned}
		\end{equation*}
		Then for all $\theta\in [0,1]$, we have 
		\begin{equation*}
			\left\|W_1T_{a_\theta}W_2\right\|_{\mathfrak{S}^{r_\theta}\left(L^2(\mathcal{I}, L^2(X))\right)}\leq  M_0^{1-\theta}M_1^\theta\left\|W_1\right\|_{L^{q_\theta}(\mathcal{I}, L^{p_\theta}(X))}\left\|W_2\right\|_{L^{q_\theta}(\mathcal{I}, L^{p_\theta}(X))},
		\end{equation*}
		where $a_\theta, r_\theta, p_\theta$ and $q_\theta$ are defined by
		\begin{equation*}
			a_\theta=(1-\theta)a_0+a_1,\; \frac{1}{r_\theta}=\frac{1-\theta}{r_0}+\frac{\theta}{r_1},\;\frac{1}{p_\theta}=\frac{1-\theta}{p_0}+\frac{\theta}{p_1}\;\text{and}\;\frac{1}{q_\theta}=\frac{1-\theta}{q_0}+\frac{\theta}{q_1}.
		\end{equation*}
		Also, when one of the exponents $r_j=\infty$,  for $j=0$ or $j=1$, then the $\mathfrak{S}^\infty$-norm on the left side of the two previous bounds can be replaced by the usual operator norm.
	\end{lemma}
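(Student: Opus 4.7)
The plan is to prove this as a Stein-type complex interpolation, combining the classical analytic-families construction on the weight (mixed-Lebesgue) side with its Schatten analogue on the operator side, and concluding via the Hadamard three-lines theorem. By density I would assume throughout that $W_1, W_2$ are simple functions on $X \times \mathcal{I}$ and that any dual operator $A$ tested against is finite-rank.

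First, I would use Schatten--H\"older duality to rewrite the target quantity as
\[
\|W_1 T_{a_\theta} W_2\|_{\mathfrak{S}^{r_\theta}(L^2(\mathcal{I},L^2(X)))} = \sup_{\|A\|_{\mathfrak{S}^{r_\theta'}} \le 1} |\mathrm{Tr}(A \cdot W_1 T_{a_\theta} W_2)|,
\]
with the convention that if $r_\theta = \infty$ one replaces the Schatten norm on the left by the operator norm and dualizes against $A \in \mathfrak{S}^1$, as the final sentence of the lemma explicitly permits. Next, I would construct three analytic families defined on the strip $a_0 \le \mathrm{Re}(z) \le a_1$: for $j=1,2$, weight families $W_j(z)$ with $W_j(a_\theta) = W_j$ such that $\|W_j(a_k + is)\|_{L^{q_k}(\mathcal{I}, L^{p_k}(X))}$ is independent of $s \in \mathbb{R}$; and an operator family $A(z)$ built from the singular value decomposition $A = \sum_n \sigma_n \langle \cdot, e_n\rangle f_n$ by raising the singular values to a $z$-dependent affine power $\gamma(z)$, so that $A(a_\theta) = A$ and $\|A(a_k + is)\|_{\mathfrak{S}^{r_k'}}$ is constant on each boundary line. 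The weight construction is the standard one for mixed-norm Lebesgue spaces: writing $W_j(t,x) = |W_j(t,x)| \, e^{i\phi_j(t,x)}$ and isolating the slice norm $u_j(t) := \|W_j(t,\cdot)\|_{L^{p_\theta}(X)}$, one defines $W_j(z)$ via a two-parameter complex power whose exponents are affine in $z$, calibrated by Benedek--Panzone interpolation between the marginals so as to hit $p_k$ and $q_k$ simultaneously on $\mathrm{Re}(z) = a_k$.

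With these families in hand, I would form the scalar-valued function
\[
F(z) := \mathrm{Tr}\bigl(A(z) \cdot W_1(z) \, T_z \, W_2(z)\bigr).
\]
Because $A(z)$ is finite-rank and the $W_j(z)$ are simple, this trace expands as a finite linear combination of pairings of the form $\langle G, T_z F\rangle$ appearing in the Stein-analyticity hypothesis, so $F$ is holomorphic in the open strip, continuous on its closure, and at most double-exponentially large in $|\mathrm{Im}(z)|$. On the two boundary lines, combining the hypotheses of the lemma with ordinary H\"older for mixed Lebesgue norms and Schatten--H\"older against $A(a_k+is)$ yields
\[
|F(a_k + is)| \le M_k e^{b_k|s|}, \qquad k = 0, 1,
\]
after invoking the boundary normalizations. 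Applying the Hadamard three-lines theorem to $F(z)/(M_0^{(a_1-z)/(a_1-a_0)} M_1^{(z-a_0)/(a_1-a_0)})$, with Phragm\'en--Lindel\"of absorbing the $e^{b_k|s|}$ factor, produces $|F(a_\theta)| \le M_0^{1-\theta} M_1^\theta$; taking the supremum over admissible $A$ and removing the normalizations of $W_1, W_2$ then gives the claimed interpolated bound.

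The main technical obstacle will lie in the construction of the weight families for mixed $L^{q}(\mathcal{I}, L^p(X))$ norms: unlike for a single $L^p$, a single complex power of $|W_j|$ cannot simultaneously balance both the time and space exponents, which is why the two-parameter analytic continuation separating the slice norm from the normalized fiber is needed. A second piece of bookkeeping concerns the endpoint cases where some $r_j = \infty$ (or $p_j, q_j = \infty$): there one loses the clean trace pairing and must instead pair with $\mathfrak{S}^1$, which is precisely why the conclusion of the lemma allows the $\mathfrak{S}^\infty$-norm to be replaced by the operator norm.
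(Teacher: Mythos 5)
Your proposal is correct and follows essentially the same route as the proof the paper relies on: the paper does not actually prove Lemma~\ref{Schatten-bound} but quotes it from \cite[Proposition 1]{FS} (see also \cite[Theorem 2.9]{Simon} and \cite[Theorem 2.26]{Nguyen}), and the argument there is precisely your combination of Schatten--H\"older duality against finite-rank operators, analytic families obtained from complex powers of the weights (the two-parameter construction needed for the mixed $L^{q}(\mathcal{I},L^{p}(X))$ norms) and of the singular values of the dual operator, followed by the Stein/Hadamard three-lines step. The only details to keep straight --- restricting the dual operators to a dense class with simple-function singular vectors so that the trace function $F(z)$ is covered by the Stein-analyticity hypothesis on pairings $\langle G,T_zF\rangle$, and the fact that boundary growth $e^{b_k|s|}$ yields the product bound $M_0^{1-\theta}M_1^{\theta}$ only up to an extra factor depending on $b_0,b_1,\theta$ via Hirschman's lemma rather than being ``absorbed'' outright --- are imprecisions shared with the quoted statement itself and are harmless in every application made in the paper.
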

	\subsection{Duality principle}
	In order to deduce estimates from a single function to an orthonormal system, the duality principle lemma in Schatten spaces plays a crucial role in our context. We refer to \cite[Lemma 3]{FS} with appropriate modifications to obtain the following duality principle.
	\begin{lemma}[Duality principle]\label{duality-principle} Assume that $A$ is a bounded operator from $L^2(X)$ to $L^{q^\prime}(\mathcal{I}, L^{p^\prime}(X))$ for some $1\leq p,q\leq 2$. Then the following statements are equivalent:
		
		$(1)$ There is a constant $C>0$ such that for all $W\in L^{\frac{2q}{2-q}}(\mathcal{I}, L^{\frac{2p}{2-p}}(X))$
		\begin{equation*}
			\big\|WAA^*\overline{W}\big\|_{\mathfrak{S}^\lambda(L^2(\mathcal{I}, L^2(X)))}\leq C\big\|W\big\|_{L^{\frac{2q}{2-q}}(\mathcal{I}, L^{\frac{2p}{2-p}}(X))}^2,
		\end{equation*}
		where the function $W$ is interpreted as an operator which acts by multiplication.
		
		$(2)$ There is a constant $C'>0$ such that for any orthonormal system $\{f_j\}_{j\in J}$ in $L^2(X)$ and any sequence $\{n_j\}_{j\in J}$ in $\mathbb{C}$
		\begin{equation*}
			\bigg\|\sum_{j\in J}n_j|Af_j|^2\bigg\|_{L^{\frac{q^\prime}{2}}(\mathcal{I}, L^{\frac{p^\prime}{2}}(X))}\leq C'\|\{n_j\}_{j\in J}\|_{\ell^{\lambda^\prime}}.
		\end{equation*}		
	\end{lemma}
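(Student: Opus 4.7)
The plan is to follow the Frank--Sabin duality argument from \cite[Lemma 3]{FS}, adapted to the mixed space-time setting; the scheme of proof is unchanged, only the exponent bookkeeping requires attention. The ingredients I will rely on are: (a) the dual of $L^{q'/2}(\mathcal{I},L^{p'/2}(X))$ under the natural integration pairing is $L^{q/(2-q)}(\mathcal{I},L^{p/(2-p)}(X))$ (well-defined because $p,q\leq 2$), and every non-negative $V$ in this dual factors as $V=|W|^2$ with $W=V^{1/2}\in L^{2q/(2-q)}(\mathcal{I},L^{2p/(2-p)}(X))$ satisfying $\|W\|^2=\|V\|$; (b) the elementary Schatten identity $\|B^*B\|_{\mathfrak{S}^\lambda}=\|BB^*\|_{\mathfrak{S}^\lambda}=\|B\|_{\mathfrak{S}^{2\lambda}}^2$, applied here with $B=WA$; and (c) the trace--H\"older inequality $|\operatorname{tr}(\gamma X)|\leq\|\gamma\|_{\mathfrak{S}^{\lambda'}}\|X\|_{\mathfrak{S}^\lambda}$.

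For the direction $(1)\Rightarrow(2)$, I would form the density operator $\gamma=\sum_j n_j|f_j\rangle\langle f_j|$ on $L^2(X)$, which by orthonormality satisfies $\|\gamma\|_{\mathfrak{S}^{\lambda'}(L^2(X))}=\|\{n_j\}_{j\in J}\|_{\ell^{\lambda'}}$. Applying (a) and viewing $V$ as a multiplication operator, the inequality in (2) reduces to controlling
\begin{equation*}
\sup_V\bigl|\operatorname{tr}\bigl(\gamma\,A^*V A\bigr)\bigr|\leq C'\,\|\{n_j\}_{j\in J}\|_{\ell^{\lambda'}},
\end{equation*}
the sup ranging over $V$ in the unit ball of $L^{q/(2-q)}(\mathcal{I},L^{p/(2-p)}(X))$. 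A standard polar decomposition $V=U|V|$ with $|U|=1$ reduces the estimate to the positive case $V=|W|^2$, in which $A^*VA=(WA)^*(WA)$, and (b) gives $\|A^*VA\|_{\mathfrak{S}^\lambda}=\|WAA^*\bar W\|_{\mathfrak{S}^\lambda}$. Combining with (c) and hypothesis (1) then yields (2) with $C'=C$.

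The converse $(2)\Rightarrow(1)$ is obtained by running the same chain in reverse. The positive operator $T=WAA^*\bar W=(WA)(WA)^*$ satisfies $\|T\|_{\mathfrak{S}^\lambda}=\sup\{|\operatorname{tr}(T\gamma)|:\gamma\geq 0,\ \|\gamma\|_{\mathfrak{S}^{\lambda'}(L^2(X))}\leq 1\}$ by Schatten duality, and the spectral theorem writes every admissible $\gamma$ as $\sum_j n_j|f_j\rangle\langle f_j|$ with orthonormal $\{f_j\}$ and $\|\{n_j\}_{j\in J}\|_{\ell^{\lambda'}}\leq 1$; then $\operatorname{tr}(T\gamma)=\int|W|^2\sum_j n_j|Af_j|^2$, and hypothesis (2) combined with the mixed H\"older inequality closes the loop. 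The step I expect to require the most care is the phase/polar bookkeeping when $V$ and $\{n_j\}$ are complex---one must arrange the Schatten--H\"older factorization so that the unimodular phase absorbs no spurious constant---together with verifying that the multiplication action of $W$ is compatible with the various duality pairings between $L^2(X)$, $L^{q'}(\mathcal{I},L^{p'}(X))$, and $L^2(\mathcal{I},L^2(X))$ used to interpret the Schatten norm.
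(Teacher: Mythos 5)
Your proposal follows the Frank--Sabin duality argument that the paper itself invokes (the paper does not give a proof, it simply refers to \cite[Lemma 3]{FS} with ``appropriate modifications''), and the exponent bookkeeping in part (a), the trace--Hölder and Schatten identities in (b)--(c), and the whole $(1)\Rightarrow(2)$ chain are correct: pointwise polar decomposition $V=\mathrm{sign}(V)|V|$ with $W=|V|^{1/2}$ gives $A^*VA=(WA)^*\,\mathrm{sign}(V)\,(WA)$, whose $\mathfrak{S}^\lambda$ norm is bounded by $\|WA\|_{\mathfrak{S}^{2\lambda}}^2=\|WAA^*\overline{W}\|_{\mathfrak{S}^\lambda}$, so the phase costs nothing. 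This matches the cited source in substance and structure.

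One small repair is needed in $(2)\Rightarrow(1)$: as written you apply Schatten duality directly to $T=(WA)(WA)^*$, which is an operator on $L^2(\mathcal{I},L^2(X))$, but then immediately diagonalize the test operator $\gamma$ as $\sum_j n_j|f_j\rangle\langle f_j|$ with $f_j$ orthonormal in $L^2(X)$ and form $Af_j$ --- these two spaces are not the same, and $\operatorname{tr}(T\gamma)$ would require $\gamma$ on $L^2(\mathcal{I},L^2(X))$, where $Af_j$ is undefined. The fix is the one you already list in ingredient (b): first write $\|T\|_{\mathfrak{S}^\lambda}=\|(WA)^*(WA)\|_{\mathfrak{S}^\lambda}=\|A^*|W|^2A\|_{\mathfrak{S}^\lambda}$, which is an operator on $L^2(X)$, then apply Schatten duality on $L^2(X)$ so that the spectral decomposition $\gamma=\sum_j n_j|f_j\rangle\langle f_j|$ with $\{f_j\}$ orthonormal in $L^2(X)$ and $\{n_j\}\in\ell^{\lambda'}$ makes sense, and compute $\operatorname{tr}\bigl(A^*|W|^2A\,\gamma\bigr)=\int|W|^2\sum_j n_j|Af_j|^2$. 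With that reorganization the converse closes exactly as you describe, via mixed Hölder and hypothesis (2).
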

	We can also extend the duality principle in Lorentz spaces with trivial
	modifications as given in \cite{BHLNS}. Before that, first, we recall the definition of Lorentz spaces on the measure space $(X, \mu)$. For a  function $f$ on $X$, the decreasing rearrangement of $f$ is the function $f^*$ defined on $[0, \infty)$ by 
	$$f^*(s)=\inf\{\alpha>0: d_f(\alpha)\leq s\},$$
	where  $$d_f(\alpha)=\mu(\{x\in X: |f(x)|>\alpha\}).$$
	Then for $0<p, q\leq \infty,$ we denote the space $L^{p, q}(X, \mu)$ consist all such measurable functions on $X$ such that $\|f\|_{L^{p, q}}<\infty$, where 
	$$\|f\|_{L^{p, q}}=\begin{cases}
		\left(\int_0^\infty (s^{\frac{1}{p}} f^*(s))^q \frac{ds}{s}\right)^{\frac{1}{ q}},  \quad 
		\text{if} ~~q<\infty,\\
		\sup_{s>0} s^{\frac{1}{p}} f^*(s),   \hspace{1.4cm}
		\text{if} ~~q=\infty.
	\end{cases}$$
	One can observe that $L^{p, p}=L^p$  and $L^{\infty, \infty}=L^\infty$. The following remark is the duality principle in Lorentz spaces.

	\begin{remark} \label{Duality-Lorentz}Assume that $A$ is a bounded operator from $L^2(X)$ to $L^{q^\prime,r^\prime}(\mathcal{I}, L^{p^\prime}(X))$ for some $1\leq p,q,r\leq 2$. Then the following statements are equivalent:
		
		$(1)$ There is a constant $C>0$ such that for all $W\in L^{\frac{2q}{2-q},\frac{2r}{2-r}}(\mathcal{I}, L^{\frac{2p}{2-p}}(X))$
		\begin{equation*}	\big\|WAA^*\overline{W}\big\|_{\mathfrak{S}^\lambda(L^2(\mathcal{I}, L^2(X)))}\leq C\big\|W\big\|_{L^{\frac{2q}{2-q},\frac{2r}{2-r}}(\mathcal{I}, L^{\frac{2p}{2-p}}(X))}^2.
		\end{equation*}

		$(2)$ There is a constant $C'>0$ such that for any orthonormal system $\{f_j\}_{j\in J}$ in $L^2(X)$ and any sequence $\{n_j\}_{j\in J}$ in $\mathbb{C}$
		\begin{equation*}
			\bigg\|\sum_{j\in J}n_j|Af_j|^2\bigg\|_{L^{\frac{q^\prime}{2},\frac{r^\prime}{2}}(\mathcal{I}, L^{\frac{p^\prime}{2}}(X))}\leq C'\|\{n_j\}_{j\in J}\|_{\ell^{\lambda^\prime}}.
		\end{equation*}		
	\end{remark}
	
	\section{Strichartz inequality for a single function}\label{sec3}
	In this section, we establish the Strichartz inequality in Lebesgue spaces as well as in Lorentz spaces for a single function. 
	
	Consider  the  Cauchy problem for Schr\"{o}dinger equation associated with $L$ as
	\begin{equation}\label{Cauchy}
		\begin{cases}
			i\partial_t u(x,t)+L u(x,t)=F(x,t)\\
			u(0,t)=f(x).
		\end{cases}
	\end{equation}
	Then, by Duhamel's principle, the solution of the Cauchy problem \eqref{Cauchy} is formally given by 
	\begin{equation*}
		u(x,t)=e^{itL}f(x)-i\int_0^t e^{i(t-s)L}F(x,s)ds.
	\end{equation*}
	The following result is known as   Strichartz estimates for Schr\"{o}dinger equation associated with $L$ on $X.$  \begin{theorem}\label{Strichartz-single}
		Assume $L$ is a non-negative self-adjoint operator on $L^2(X)$ and satisfies Assumption $(\mathcal{A})$ and $(\mathcal{B})$. Then for any $p_i,q_i\geq 2$ $(i=1,2)$ such that
		\begin{equation*}
			\frac{2}{q_i}+\frac{n}{p_i}=\frac{n}{2},
		\end{equation*}
		except for $(q_i,p_i,n)=(2,\infty,2)$,
		we have\\
		$(1)$ (The homogeneous Strichartz estimate)
		\begin{equation*}
			\|e^{itL}f\|_{L^{q_1}\left(\mathcal{I},L^{p_1}(X)\right)}\lesssim \|f\|_{L^2(X)}.
		\end{equation*}
		$(2)$ (The dual homogeneous Strichartz estimate)
		\begin{equation*}
			\left\|\int_\mathcal{I}e^{-isL}F(\cdot,s)ds\right\|_{L^2(X)}\lesssim \|F\|_{L^{q_1'}\left(\mathcal{I},L^{p_1'}(X)\right)}.
		\end{equation*}
		$(3)$ (The retarded Strichartz estimate)
		\begin{equation*}
			\left\|\int_0^te^{i(t-s)L}F(\cdot,s)ds\right\|_{L^{q_2}\left(\mathcal{I},L^{p_2}(X)\right)}\lesssim \|F\|_{L^{q_1'}\left(\mathcal{I},L^{p_1'}(X)\right)}.
		\end{equation*}
	\end{theorem}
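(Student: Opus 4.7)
The plan is to reduce the statement to the abstract Strichartz machinery of Keel--Tao \cite{KT}, whose hypotheses are precisely (i) $L^2$ conservation for the propagator and (ii) an $L^1 \to L^\infty$ dispersive decay. Both are essentially built into Assumptions $(\mathcal{A})$ and $(\mathcal{B})$, so the bulk of the work lies in rewriting them in the correct form.

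First I would verify the energy identity. Since $L$ is non-negative and self-adjoint on $L^2(X)$, the spectral theorem supplies a unitary functional calculus, and $U(t) := e^{itL}$ is a unitary group on $L^2(X)$; in particular $\|U(t)f\|_{L^2(X)} = \|f\|_{L^2(X)}$ for all $t \in \mathbb{R}$. Next I would convert the kernel bound into the operator-norm bound on $\mathcal{I}=(-T_0,T_0)$. By Assumption $(\mathcal{A})$,
\begin{equation*}
|U(t)f(x)| \le \int_X |K_{it}(x,y)|\,|f(y)|\,d\mu(y) \le \Big(\sup_{x,y\in X}|K_{it}(x,y)|\Big)\|f\|_{L^1(X)},
\end{equation*}
so Assumption $(\mathcal{B})$ gives $\|U(t)\|_{L^1\to L^\infty}\lesssim |t|^{-n/2}$ for $t\in\mathcal{I}$. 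Because $U(t)U(s)^*=U(t-s)$, this is exactly the dispersive estimate required to feed into \cite{KT}.

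With both inputs verified, statement (1) and its dual (2) follow immediately from the abstract Keel--Tao theorem, for every admissible pair $(q_i,p_i)$ with $q_i,p_i\ge 2$ and $\tfrac{2}{q_i}+\tfrac{n}{p_i}=\tfrac{n}{2}$, save the double-endpoint $(q,p,n)=(2,\infty,2)$ which is explicitly excluded by the statement. The retarded estimate (3) is obtained either directly from the bilinear $TT^*$ portion of Keel--Tao (which handles general pairs of admissible exponents once the dispersive decay holds) or, in the non-sharp range $q_2>q_1'$, by combining (1) and (2) with the Christ--Kiselev lemma applied to the time integral on the truncated interval $\mathcal{I}$.

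In effect there is no genuine obstacle: the abstract framework of \cite{KT} was designed for exactly this kind of setting, and the only step that uses anything specific to $(X,\mu)$ is the one-line passage from the pointwise kernel bound to the $L^1\to L^\infty$ operator bound above. The restriction $|t|<T_0$ in Assumption $(\mathcal{B})$ is harmless because the space-time norms are taken over $\mathcal{I}=(-T_0,T_0)$, so all kernels encountered in the $TT^*$ integrals $\int_{\mathcal{I}} U(t-s)\,ds$ automatically lie in the regime where the dispersive bound holds; no global-in-time control is required.
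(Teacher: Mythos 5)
The proposal is correct and takes essentially the same route as the paper: verify $L^2$ conservation from unitarity, convert the pointwise kernel bound of Assumption $(\mathcal{B})$ into an $L^1\to L^\infty$ operator decay via Assumption $(\mathcal{A})$, and feed these two hypotheses into abstract Strichartz machinery. The only substantive difference is presentational: the paper's appendix spells out a Ginibre--Velo style $TT^*$ proof (factorisation lemmas, Minkowski's inequality, and Hardy--Littlewood--Sobolev) for the non-endpoint admissible pairs and defers the endpoints to Keel--Tao, while you invoke Keel--Tao \cite{KT} directly for the full admissible range; the paper itself lists both \cite{GV} and \cite{KT} as acceptable references, so this is a matter of exposition rather than a genuinely different argument. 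One claim in your final paragraph is worth tightening: you assert that for $t,s\in\mathcal{I}=(-T_0,T_0)$ the kernels appearing in the $TT^*$ integral automatically lie in the dispersive regime $|t-s|<T_0$. Since $|t-s|$ can reach $2T_0$, this is literally correct only when $T_0=\infty$. For finite $T_0$ one needs a routine patch: either assume the kernel decay on an interval of length $2T_0$, run the argument on subintervals of length $T_0$ and piece together, or insert an explicit cutoff $\chi_{|t-s|<T_0}$ into the kernel as the paper does in its Schatten-norm lemma. This does not affect the conclusion, but the step should not be described as automatic.
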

	The procedure to prove the above theorem is classical and some good references are given, for example, by Ginibre-Velo \cite{GV} and Keel-Tao \cite{KT}. For the reader's convenience, we just provide the main steps of the proof in Section \ref{sec8} for the non-endpoints Strichartz estimates. Indeed, we can also prove endpoints Strichartz estimates 
	following procedures as given in  \cite{KT}.  
	
	\begin{remark}\label{Strichartz-Lorentz}
		One of the key ingredients to obtain Strichartz estimates is the Hardy-Littlewood-Sobolev inequality for Lebesgue functions.  However, in order to prove Strichartz inequality  in  
		Lorentz space, we need a refined version of the Hardy-Littlewood-Sobolev inequality for Lebesgue functions. From   \cite{Neil}, we have the following  Littlewood-Sobolev inequality in  
		Lorentz space:  
		
		Let  $\sigma\in (0,1)$  and $p_1,p_2,r_1,r_2\in (1,+\infty)$ such that 
		\begin{equation*}
			\frac{1}{q_1}+\frac{1}{q_2}+\sigma=2 ~~\text{ and } ~~\frac{1}{r_1}+\frac{1}{r_2}\geq1,
		\end{equation*}
		then  
		\begin{equation*}
			\left|\int_\mathcal{I}\int_\mathcal{I}\frac{g(t)h(s)}{|t-s|^\sigma}dsdt\right|\lesssim \|g\|_{L^{q_1,r_1}(\mathcal{I})}\|h\|_{L^{q_2,r_2}(\mathcal{I})}.
		\end{equation*}
	\end{remark}
	Now applying the same procedure as in the proof of  Theorem \ref{Strichartz-single} with  the Littlewood-Sobolev
	inequality in Lorentz space,  we have the following Strichartz estimates for functions in Lorentz spaces.  
	\begin{theorem}\label{Lorentz-single}
		Let  $L$ is a non-negative self-adjoint operator on $L^2(X)$ and satisfies Assumption $(\mathcal{A})$ and $(\mathcal{B})$. Then for any $q,p,r\geq 2$   such that
		\begin{equation*}
			\frac{2}{q}+\frac{n}{p}=\frac{n}{2},
		\end{equation*}
		except for $(q,p,n)=(2,\infty,2)$,  then 
		\begin{equation*}
			\|e^{itL}f\|_{L^{q,r}(\mathcal{I},L^p(X))}\lesssim \|f\|_{L^2(X)}.
		\end{equation*}
	\end{theorem}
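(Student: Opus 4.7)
My plan is to mimic the $TT^*$ proof of Theorem~\ref{Strichartz-single}, but replace the standard Hardy--Littlewood--Sobolev inequality by the Lorentz-space version stated in Remark~\ref{Strichartz-Lorentz}. Concretely, set $Tf(t,x) = e^{itL}f(x)$. By duality of Lorentz spaces (which is available here because $r \ge 2$ forces $r' \le 2$ and since, by exclusion of $(q,p,n)=(2,\infty,2)$, the exponents $q,p$ are finite reflexive Lorentz/Lebesgue), it suffices to bound the $TT^*$ operator
\begin{equation*}
TT^*F(t,x) = \int_{\mathcal{I}} e^{i(t-s)L} F(\cdot,s)(x)\, ds
\end{equation*}
as a map $L^{q',r'}(\mathcal{I},L^{p'}(X)) \to L^{q,r}(\mathcal{I},L^{p}(X))$.

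First, from Assumption $(\mathcal{A})$ and $(\mathcal{B})$ together with the unitarity of $e^{itL}$ on $L^2(X)$, Riesz--Thorin interpolation yields
\begin{equation*}
\|e^{i(t-s)L} g\|_{L^{p}(X)} \lesssim |t-s|^{-\sigma}\, \|g\|_{L^{p'}(X)}, \qquad \sigma := n\Bigl(\tfrac{1}{2}-\tfrac{1}{p}\Bigr)=\tfrac{2}{q},
\end{equation*}
where the last identity uses the scaling relation $\frac{2}{q}+\frac{n}{p}=\frac{n}{2}$. Next, pairing $TT^*F$ against $G \in L^{q',r'}(\mathcal{I},L^{p'}(X))$ and applying Hölder in the spatial variable, the problem reduces to bounding
\begin{equation*}
\int_{\mathcal{I}}\int_{\mathcal{I}} \frac{\|G(t)\|_{L^{p'}(X)} \, \|F(s)\|_{L^{p'}(X)}}{|t-s|^{\sigma}}\, ds\, dt.
\end{equation*}
I would then invoke the Lorentz Hardy--Littlewood--Sobolev inequality from Remark~\ref{Strichartz-Lorentz} with $q_1 = q_2 = q'$ and $r_1 = r_2 = r'$; this requires $\sigma \in (0,1)$, $q' \in (1,\infty)$, and $\tfrac{1}{r_1}+\tfrac{1}{r_2} = \tfrac{2}{r'} \ge 1$. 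All three conditions hold: $\sigma = 2/q \in (0,1)$ once the degenerate case $(q,p,n)=(2,\infty,2)$ is excluded, $q' \in (1,2]$ since $q\ge 2$, and $r \ge 2$ gives $r' \le 2$. The Lorentz HLS delivers
\begin{equation*}
|\langle G, TT^*F\rangle| \lesssim \|G\|_{L^{q',r'}(\mathcal{I},L^{p'}(X))}\, \|F\|_{L^{q',r'}(\mathcal{I},L^{p'}(X))},
\end{equation*}
and taking the supremum over unit-norm $G$ closes the $TT^*$ argument.

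The main technical hurdle is purely bookkeeping of exponent conditions, specifically verifying that the Lorentz HLS admits exactly the range $r \ge 2$ (via $2/r'\ge 1$) and that the scaling identity converts the dispersive singularity $\sigma = n(\tfrac{1}{2}-\tfrac{1}{p})$ into precisely $2/q$, so that the Lorentz HLS hypothesis $\tfrac{1}{q'}+\tfrac{1}{q'}+\sigma = 2$ is automatically satisfied. Subtler is the handling of weak-type corners, in particular $r=\infty$ (weak $L^q$ in time) where Lorentz duality is not fully reflexive; for this case I would argue instead by real interpolation between strong Strichartz estimates from Theorem~\ref{Strichartz-single} at two admissible pairs straddling $(q,p)$, which directly produces the Lorentz-space endpoint without going through $TT^*$.
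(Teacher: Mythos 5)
Your proposal is essentially the paper's own argument: the paper disposes of Theorem \ref{Lorentz-single} in one sentence, saying it follows by running the $TT^*$ proof of Theorem \ref{Strichartz-single} with the Lorentz--Hardy--Littlewood--Sobolev inequality of Remark \ref{Strichartz-Lorentz} in place of the ordinary one, and that is exactly what you do, with the correct exponent bookkeeping $\sigma = n(\tfrac12-\tfrac1p)=2/q$ and the correct identification of $r\geq 2$ as the condition $\tfrac1{r_1}+\tfrac1{r_2}=\tfrac2{r'}\geq 1$.

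One inaccuracy worth fixing: you assert that $\sigma=2/q\in(0,1)$ ``once the degenerate case $(q,p,n)=(2,\infty,2)$ is excluded,'' but that exclusion only forces $q>2$ when $n\leq 2$. For $n\geq 3$ the Keel--Tao endpoint $(q,p)=(2,\tfrac{2n}{n-2})$ is \emph{not} excluded by the theorem's hypotheses, and there $\sigma=1$, which the Lorentz HLS does not cover (its hypothesis requires $\sigma\in(0,1)$ and $q'\in(1,\infty)$). This is the same gap already present in the paper's appendix proof of Theorem \ref{Strichartz-single}, which the authors explicitly patch by invoking Keel--Tao for the endpoint; you should do likewise here rather than attribute the restriction solely to the $(2,\infty,2)$ exclusion. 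Similarly, $r=\infty$ falls outside the stated Lorentz HLS ($r'\in(1,\infty)$ is required), which you do notice; your proposed fix by real interpolation between two Lebesgue Strichartz estimates at different admissible pairs $(q_0,p_0)\neq(q_1,p_1)$ is plausible but not automatic, since $\bigl(L^{q_0}(L^{p_0}),L^{q_1}(L^{p_1})\bigr)_{\theta,r}$ need not coincide with $L^{q,r}(L^p)$ when the inner spaces differ, so that step would need a supporting reference or a more careful argument.
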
 
	\begin{corollary}
		Under the same assumptions as in  Theorem \ref{Strichartz-single}, for $f\in L^2(X)$ and $F\in L^{q_2'}\left(\mathcal{I},L^{p_1'}(X)\right)$,     the solution $u$ of the solution of the Schr\"{o}dinger equation \eqref{Cauchy} belongs to $L^{q_2}\left(\mathcal{I},L^{p_2}(X)\right)$ and satisfies
		\begin{equation*}
			\|u\|_{L^{q_2}\left(\mathcal{I},L^{p_2}(X)\right)}\lesssim \|f\|_{L^2(X)}+\|F\|_{L^{q_2'}\left(\mathcal{I},L^{p_1'}(X)\right)}.
		\end{equation*}
		
	\end{corollary}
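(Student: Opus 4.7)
The plan is to derive this corollary as an immediate consequence of Theorem~\ref{Strichartz-single} via Duhamel's formula and the triangle inequality, so there is no new dispersive input required — everything is already packaged in parts (1) and (3) of that theorem. The only subtlety to flag is the apparent typo in the hypothesis: for the retarded estimate to apply, the forcing term should lie in $L^{q_1'}(\mathcal{I}, L^{p_1'}(X))$ rather than $L^{q_2'}$, and I would state the corollary with $F \in L^{q_1'}(\mathcal{I}, L^{p_1'}(X))$, consistent with the way $(q_1,p_1)$ and $(q_2,p_2)$ are treated as two independent admissible pairs in the theorem.

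First I would recall Duhamel's principle, which writes the solution of \eqref{Cauchy} as
\begin{equation*}
u(x,t) = e^{itL} f(x) - i \int_0^t e^{i(t-s)L} F(x,s)\, ds.
\end{equation*}
Taking the $L^{q_2}(\mathcal{I}, L^{p_2}(X))$ norm and using the triangle inequality immediately gives
\begin{equation*}
\|u\|_{L^{q_2}(\mathcal{I}, L^{p_2}(X))} \leq \|e^{itL} f\|_{L^{q_2}(\mathcal{I}, L^{p_2}(X))} + \left\| \int_0^t e^{i(t-s)L} F(\cdot, s)\, ds \right\|_{L^{q_2}(\mathcal{I}, L^{p_2}(X))}.
\end{equation*}

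Next, I would control each piece separately. Since $(q_2, p_2)$ is an admissible pair in the sense of Theorem~\ref{Strichartz-single}, part (1) of that theorem applied with the pair $(q_2, p_2)$ (rather than $(q_1, p_1)$) yields $\|e^{itL}f\|_{L^{q_2}(\mathcal{I}, L^{p_2}(X))} \lesssim \|f\|_{L^2(X)}$. For the inhomogeneous term, part (3) of Theorem~\ref{Strichartz-single} is tailored precisely to estimates mixing two admissible pairs and gives
\begin{equation*}
\left\| \int_0^t e^{i(t-s)L} F(\cdot,s)\, ds \right\|_{L^{q_2}(\mathcal{I}, L^{p_2}(X))} \lesssim \|F\|_{L^{q_1'}(\mathcal{I}, L^{p_1'}(X))}.
\end{equation*}
Summing these two bounds yields the stated estimate.

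There is really no obstacle here beyond bookkeeping; the entire argument is a one-line combination of Duhamel + the three Strichartz estimates already established. If one wanted to be slightly more careful, the only point worth checking is that the integral $\int_0^t e^{i(t-s)L} F(\cdot, s)\, ds$ is well-defined as an element of $L^{q_2}(\mathcal{I}, L^{p_2}(X))$ a priori — this follows from a standard density argument (approximating $F$ by smooth compactly supported functions in $L^{q_1'}(\mathcal{I}, L^{p_1'}(X))$ and using the above bound to pass to the limit). Apart from this routine density step, the corollary is an immediate consequence of the preceding theorem.
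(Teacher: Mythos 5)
Your argument is correct and is exactly the intended one: the paper states this corollary without proof because it is a one-line Duhamel decomposition followed by parts (1) and (3) of Theorem~\ref{Strichartz-single}. You are also right to flag the exponent mismatch: part~(3) of the theorem (and the abstract duality Lemma~\ref{DuilatyXX} underlying it, which gives $T_2^*T_1 : L^{q_1'}L^{p_1'} \to L^{q_2}L^{p_2}$) forces the forcing term to lie in $L^{q_1'}(\mathcal{I},L^{p_1'}(X))$, so the $q_2'$ appearing in the corollary (and in the displayed equation \eqref{non-retarded} in Section~\ref{sec8}) should read $q_1'$; this is a typo propagated through the paper, and your corrected statement is the one that the proof actually yields.
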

	
	\section{Strichartz inequalities for system of orthonormal functions}\label{sec4}
	This section is devoted to extending Theorem \ref{Strichartz-single} part (1) and Theorem \ref{Lorentz-single} for a single function to a system of orthonormal functions.   Note that $Af(x,t)=e^{itL}f(x)$ is the solution to the linear Schr\"odinger equation (\ref{Cauchy}). Also,  the solution $Af(x,t)=e^{itL}f(x)$  is a bounded operator from $L^2(X)$ to $L^q(\mathcal{I},L^p(X))$ and its adjoint operator $A^*$,  from $L^{q'}(\mathcal{I},L^{p'}(X))$ to $L^2(X)$ is defined by
	\begin{equation*}
		A^*F(x)=\int_\mathcal{I} e^{-isL}F(x,s)ds.
	\end{equation*}
	We define $\mathcal{T}=AA^*$ by
	\begin{equation*}
		\mathcal{T}F(x,t)=\int_\mathcal{I} e^{i(t-s)L}F(x,s)ds=\int_\mathcal{I}\int_XK_{i(t-s)}(x,y)F(y,s)d\mu(y)ds.
	\end{equation*}
	Then we have the following result regarding the boundedness of $\mathcal{T}$ in Schatten spaces. 
	\begin{lemma}\label{Schatten}
		Assume $L$ is a non-negative self-adjoint operator on $L^2(X)$ satisfying $(\mathcal{A})$ and $(\mathcal{B})$. Then for any $p,q\geq 1$ such that
		\begin{equation*}
			\frac{2}{q}+\frac{n}{p}=1 \quad \text{and}\quad n+1<p\leq n+2, 
		\end{equation*}
		we have
		\begin{equation*}
			\big\|W_1\mathcal{T}W_2\big\|_{\mathfrak{S}^p(L^2(\mathcal{I}, L^2(X)))}\leq C\big\|W_1\big\|_{L^q(\mathcal{I}, L^p(X))}\big\|W_2\big\|_{L^q(\mathcal{I}, L^p(X))},
		\end{equation*} 
		for all $W_1,W_2\in L^q(\mathcal{I}, L^p(X))$, where $C>0$ is independent of $W_1$ and $W_2$.
	\end{lemma}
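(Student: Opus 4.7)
The strategy is complex interpolation in Schatten classes (Lemma~\ref{Schatten-bound}) between a Schatten-$\infty$ (operator-norm) endpoint and a Schatten-$2$ (Hilbert--Schmidt) endpoint for the sesquilinear structure $W_1\mathcal{T}W_2$, which I would establish separately and then interpolate.

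For the $\mathfrak{S}^\infty$ endpoint, the plan is to combine the retarded Strichartz estimate from Theorem~\ref{Strichartz-single}(3), which gives $\mathcal{T}\colon L^{\tilde q'}(\mathcal{I},L^{\tilde p'}(X))\to L^{\tilde q}(\mathcal{I},L^{\tilde p}(X))$ along any admissible pair $\frac{2}{\tilde q}+\frac{n}{\tilde p}=\frac{n}{2}$, with two applications of H\"older's inequality. For $F\in L^2(\mathcal{I},L^2(X))$ one controls $\|W_2F\|_{L^{\tilde q'}(L^{\tilde p'})}\leq \|W_2\|_{L^{q_0}(L^{p_0})}\|F\|_{L^2(L^2)}$ and, symmetrically after applying $\mathcal{T}$, absorbs the outer $W_1$. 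Tracking the exponents shows this yields
\[
\|W_1\mathcal{T}W_2\|_{\mathfrak{S}^\infty(L^2(\mathcal{I},L^2(X)))}\lesssim \|W_1\|_{L^{q_0}(L^{p_0})}\|W_2\|_{L^{q_0}(L^{p_0})}
\]
precisely when $(p_0,q_0)$ lies on the target line $\frac{2}{q_0}+\frac{n}{p_0}=1$.

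For the $\mathfrak{S}^2$ endpoint, I would introduce an analytic family of the form
\[
T_z \;:=\; \frac{W_1}{|W_1|}\,|W_1|^{\alpha z+\beta}\;\mathcal{T}\;\frac{W_2}{|W_2|}\,|W_2|^{\alpha z+\beta},
\]
with real constants $\alpha,\beta$ chosen so that the interpolation point recovers $W_1\mathcal{T}W_2$ while at $\mathrm{Re}\,z=a_1$ the real exponent $r:=\alpha a_1+\beta$ is large enough for the direct Hilbert--Schmidt calculation to close. Using Assumption~$(\mathcal{B})$ one gets
\[
\|T_{a_1+is}\|_{\mathfrak{S}^2}^2\;\lesssim\;\iint_{\mathcal{I}\times\mathcal{I}}|t-s|^{-n}\,\|W_1(\cdot,t)\|_{L^{2r}(X)}^{2r}\,\|W_2(\cdot,s)\|_{L^{2r}(X)}^{2r}\,dt\,ds,
\]
and the Hardy--Littlewood--Sobolev inequality in time then produces a bound of the form $\|W_1\|_{L^{q_1}(L^{p_1})}^{2r}\|W_2\|_{L^{q_1}(L^{p_1})}^{2r}$ for a suitable endpoint pair $(p_1,q_1)$. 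The unimodular factor $|W_j|^{i\alpha s}$ supplies the sub-exponential growth in $s$ required by Stein's lemma.

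The final step is to apply Lemma~\ref{Schatten-bound} to $\{T_z\}$, selecting the interpolation parameter $\theta$ so that the interpolated Schatten index equals $p$ and the weight power equals $1$; this produces the desired bound with $L^q(L^p)$ norms on both sides. The main obstacle is the bookkeeping: the constants $\alpha,\beta,a_1$ together with the endpoint pairs $(p_j,q_j)$ must be chosen jointly so that the interpolated space--time norm is exactly $L^q(L^p)$, the weight power at $\theta$ is exactly $1$, and both endpoint estimates remain valid. The restriction $n+1<p\leq n+2$ reflects the simultaneous solvability of these constraints, balancing Strichartz admissibility of the dual pair at the operator-norm endpoint against HLS admissibility of the time kernel $|t-s|^{-n}$ at the Hilbert--Schmidt endpoint.
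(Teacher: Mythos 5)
The crux of your $\mathfrak{S}^2$ endpoint does not close, and the restriction $n+1<p\le n+2$ does not reflect what you claim it does. After using Assumption~$(\mathcal{B})$ and squaring, the Hilbert--Schmidt computation produces the time kernel $|t-s|^{-n}$. Since $n\ge 1$, this singularity is not locally integrable in one dimension, and the one-dimensional Hardy--Littlewood--Sobolev inequality only handles $|t-s|^{-\sigma}$ with $\sigma\in(0,1)$; it gives nothing for $\sigma=n\ge 1$. Your proposed analytic family $T_z = \frac{W_1}{|W_1|}|W_1|^{\alpha z+\beta}\,\mathcal{T}\,\frac{W_2}{|W_2|}|W_2|^{\alpha z+\beta}$ deforms only the weights, not the kernel of $\mathcal{T}$, so the offending factor $|t-s|^{-n}$ is unchanged at every $z$, and there is no choice of $r=\alpha a_1+\beta$ that renders the double integral finite. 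The paper's key idea, which your proposal is missing, is precisely to place the analytic deformation on the \emph{kernel}: they set $\mathcal{K}_{z,\varepsilon}(x,y,t) = t^{z}\chi_{\varepsilon<|t|<T_0}(t)K_{it}(x,y)$, so that after squaring the time kernel becomes $|t-s|^{-(n-2\operatorname{Re}z)}$, and by choosing $\operatorname{Re}z$ close to $n/2$ the exponent $n-2\operatorname{Re}z$ can be pushed into the HLS-admissible range $[0,1)$. This is what produces the Hilbert--Schmidt bound with norms $L^{u}(\mathcal{I},L^2(X))$ and, in turn, the constraint $n+1<p\le n+2$ after interpolating with the other endpoint.

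Secondarily, your $\mathfrak{S}^\infty$ endpoint (Strichartz plus two H\"older applications) is a correct observation for $W_1\mathcal{T}W_2$ itself, but it is not the endpoint the paper uses inside its analytic family: they take $\operatorname{Re}z=-1$ and show $T_{z,\varepsilon}$ is $L^2(\mathcal{I},L^2(X))$-bounded by reducing, via unitarity of $e^{itL}$, to a truncated Hilbert-transform type operator $H_{z,\varepsilon}$ in the time variable, with constant growing at most exponentially in $\operatorname{Im}z$. Your Strichartz--H\"older bound only holds at the \emph{value} $z$ where $T_z$ equals $W_1\mathcal{T}W_2$; it does not extend to the whole vertical line $\operatorname{Re}z=a_0$ of the analytic family you propose, which is what Lemma~\ref{Schatten-bound} requires. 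To repair the argument you would need to adopt the paper's kernel-side analytic family and prove the two endpoint bounds for that family.
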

	\begin{proof}
		For any $\varepsilon>0$, we define
		\begin{equation*}
			T_\varepsilon F(x,t)=\int_X\int_\mathcal{I} \mathcal{K}_\varepsilon (x,y,t-s)F(y,s)dsd\mu(y),
		\end{equation*}
		where $\mathcal{K}_\varepsilon (x,y,t)=\chi_{\varepsilon< |t|< T_0}(t)K_{it}(x,y)$. Then for any $t\in \mathcal{I}$, it follows from $(\mathcal{B})$ that  		\begin{equation}\label{varepsilon-decay}
			\sup_{x,y\in X}|\mathcal{K}_\varepsilon (x,y,t)|\lesssim |t|^{-\lambda_0},\;\text{where}\; \lambda_0=\frac{n}{2}.
		\end{equation}
		To use Stein's analytic complex interpolation, for $z\in\mathbb{C}$ with $Re(z)\in [-1,\lambda_0]$, we define
		\begin{equation*}
			\mathcal{K}_{z,\varepsilon} (x,y,t)=t^z\mathcal{K}_\varepsilon (x,y,t)
		\end{equation*}
		and define the corresponding operator as
		\begin{equation*}
			T_{z,\varepsilon} F(x,t)=\int_X\int_\mathcal{I} \mathcal{K}_{z,\varepsilon} (x,y,t-s)F(y,s)dsd\mu(y).
		\end{equation*}
		Then for any $\varepsilon>0$,   $\{T_{z,\varepsilon}\}_z$ forms an analytic family of operators in the sense of Stein (see 
		Subsection \ref{Stein}). Moreover, from \eqref{varepsilon-decay}, we can deduce that 
		\begin{equation}\label{z-varepsilon-decay}
			\sup_{x,y\in X}|\mathcal{K}_{z,\varepsilon} (x,y,t)|\lesssim |t|^{Re(z)-\lambda_0},\; \forall t\in \mathcal{I}.
		\end{equation}
		Now keeping in mind  that the $\mathfrak{S}^2$-norm is the  usual Hilbert-Schmidt norm, using the Hardy-Littlewood-Sobolev inequality in Lebesgue spaces  along with \eqref{z-varepsilon-decay}, we obtain 
		\begin{align*}
			\big\|W_1 T_{z,\varepsilon} W_2\big\|^2_{\mathfrak{S}^2(L^2(\mathcal{I}, L^2(X)))}&=\int_{\mathcal{I}\times\mathcal{I}}\int_{X\times X}|W_1(x,t)|^2|\mathcal{K}_{z,\varepsilon} (x,y,t-s)|^2|W_2(y,s)|^2d\mu(x)d\mu(y)dtds\\
			&\lesssim \int_{\mathcal{I}\times\mathcal{I}}\int_{X\times X}\frac{|W_1(x,t)|^2|W_2(y,s)|^2}{|t-s|^{2\lambda_0-2Re(z)}}d\mu(x)d\mu(y)dtds\\
			&=\int_{\mathcal{I}\times\mathcal{I}}\frac{\|W_1(\cdot,t)\|_{L^2(X)}^2\|W_2(\cdot,s)\|_{L^2(X)}^2}{|t-s|^{2\lambda_0-2Re(z)}}dtds\\
			&\lesssim \|W_1\|^2_{L^{2\tilde{u}}\left(\mathcal{I},L^2(X)\right)}\|W_2\|^2_{L^{2\tilde{u}}\left(\mathcal{I},L^2(X)\right)},
		\end{align*}
		provided that $0\leq 2\lambda_0-2Re(z)<1$ and $\frac{2}{\tilde{u}}+2\lambda_0-2Re(z)=2$. If we write $u=2\tilde{u}$, then $\frac{1}{u}\in (\frac{1}{4},\frac{1}{2}]$ and
		\begin{equation}\label{Interpolation1}
			\big\|W_1 T_{z,\varepsilon} W_2\big\|_{\mathfrak{S}^2(L^2(\mathcal{I}, L^2(X)))}\lesssim \|W_1\|_{L^u\left(\mathcal{I},L^2(X)\right)}\|W_2\|_{L^u\left(\mathcal{I},L^2(X)\right)},
		\end{equation}
		provided that $\frac{1}{u}=\frac{1}{2}-\frac{1}{2}(\lambda_0-Re(z))$ and $Re(z)\in (\frac{2\lambda_0-1}{2},\lambda_0]$.
		
		On the other hand,  for $Re(z)=-1$, our aim is to show that the operator $T_{z,\varepsilon}: L^2(\mathcal{I}, L^2(X))\rightarrow L^2(\mathcal{I}, L^2(X))$ is bounded with some constant depending only on $n$ and $Im(z)$ exponentially. In fact, we can write
		\begin{align*}
			T_{z,\varepsilon} F(x,t)&=\int_X\int_\mathcal{I} \mathcal{K}_{z,\varepsilon} (x,y,t-s)F(y,s)dsd\mu(y)\\
			&=\int_{\varepsilon< |t|< T_0} s^{-1+iIm(z)} ds \int_X K_{is}(x,y)F(y,t-s)dy\\
			&=\int_{\varepsilon< |t|< T_0} s^{-1+iIm(z)} e^{isL}F(x,t-s)ds.
		\end{align*}
		Since   $e^{-itL}$ is  an     unitary operator on $L^2(X)$, it follows that  
		\begin{align*}
			\|T_{z,\varepsilon} F\|_{L^2(X)}&=\|e^{-itL}T_{z,\varepsilon} F\|_{L^2(X)}\\
			&=\left\|\int_{\varepsilon< |t|< T_0} s^{-1+iIm(z)} G(\cdot,t-s)ds\right\|_{L^2(X)},
		\end{align*}
		where $G(x,t)=e^{-itL}F(x,t)$ and
		\begin{equation}\label{norm}
			\|G(\cdot,t)\|_{L^2(X)}=\|F(\cdot,t)\|_{L^2(X)}.  
		\end{equation} If we define
		\begin{equation*}
			H_{z,\varepsilon}: h(t)\mapsto \int_{\varepsilon< |t|< T_0} s^{-1+iIm(z)} h(t-s)ds,
		\end{equation*}
		then we have
		\begin{equation}\label{norm1}
			\|T_{z,\varepsilon} F\|_{L^2(\mathcal{I},L^2(X))}=\|H_{z,\varepsilon} G\|_{L^2(\mathcal{I},L^2(X))}=\|H_{z,\varepsilon} G\|_{L^2(X,L^2(\mathcal{I}))}.
		\end{equation}
		Since the operator $H_{z,\varepsilon}$ is just Hilbert transform up to $iIm(z)$, we have the bound $H_{z,\varepsilon}: L^2(\mathcal{I})\mapsto L^2(\mathcal{I})$ with some constant depending only on $Im(z)$ exponentially. For a detailed analysis, we refer to   Vega \cite{Vega}. Now  combining   \eqref{norm} and \eqref{norm1}, we obtain
		\begin{align*}
			\|T_{z,\varepsilon} F\|_{L^2(\mathcal{I},L^2(X))}&\leq C(Im(z))\|G\|_{L^2(X,L^2(\mathcal{I}))}\\
			&=C(Im(z))\|G\|_{L^2(\mathcal{I},L^2(X))}\\
			&=C(Im(z))\|F\|_{L^2(\mathcal{I},L^2(X))},
		\end{align*}
		which is our desired bound for $T_{z,\varepsilon}: L^2(\mathcal{I}, L^2(X))\rightarrow L^2(\mathcal{I}, L^2(X))$ when $Re(z)=-1$.
		
		Noting the fact that $\mathfrak{S}^\infty$-norm is the usual operator norm, for $Re(z)=-1$, we get 
		\begin{align}\label{Interpolation2}\nonumber
			\|W_1T_{z,\varepsilon}W_2\|_{\mathfrak{S}^\infty(L^2(\mathcal{I},L^2(X)))}&= \|W_1T_{z,\varepsilon}W_2\|_{L^2(\mathcal{I},L^2(X))\rightarrow L^2(\mathcal{I},L^2(X))}\\\nonumber
			&\leq \|W_1\|_{L^\infty(\mathcal{I},L^\infty(X))}\|T_{z,\varepsilon}\|_{L^2(\mathcal{I},L^2(X))\rightarrow L^2(\mathcal{I},L^2(X))}\|W_2\|_{L^\infty(\mathcal{I},L^\infty(X))}\\ 
			&\leq C(Im(z)) \|W_1\|_{L^\infty(\mathcal{I},L^\infty(X))} \|W_2\|_{L^\infty(\mathcal{I},L^\infty(X))}.
		\end{align}
		Finally, applying the complex interpolation method given in  Lemma \ref{Schatten-bound}, between (\ref{Interpolation1}) and (\ref{Interpolation2}) for $z=0$,  we get 
		\begin{equation}\label{Interpolation3}
			\big\|W_1 T_\varepsilon W_2\big\|_{\mathfrak{S}^p(L^2(\mathbb{I}, L^2(X)))}\leq C\big\|W_1\big\|_{L^q(\mathbb{I}, L^p(X))}\big\|W_2\big\|_{L^q(\mathbb{I}, L^p(X))},
		\end{equation} 
		for some $C>0$ independent of $\varepsilon$ as long as
		\begin{equation*}
			\frac{1}{q}+\frac{\lambda_0}{p}=\frac{1}{2},\; \text{with}\; 2\lambda_0+1<p\leq 2(\lambda_0+1).
		\end{equation*}
		Finally,     letting $\varepsilon\rightarrow 0^+$ in (\ref{Interpolation3}), we complete the proof of the theorem. 
	\end{proof}
	Using the above theorem,  in the following result,  we establish the orthonormal Strichartz estimate in Lebesgue spaces, i.e., orthonormal version of Theorem \ref{Strichartz-single} part (1).  \begin{theorem}\label{OSI}
		Assume $L$ is a non-negative self-adjoint operator on $L^2(X)$ satisfying Assumption $(\mathcal{A})$ and $(\mathcal{B})$. Suppose $p,q\geq 1$ satisfy \begin{equation*}
			\frac{2}{q}+\frac{n}{p}=n. 
		\end{equation*}
		
		(i) If  $  1\leq p<\frac{n+1}{n-1},$  then 
		\begin{equation}\label{eq1}
			\left\|\sum_{j\in J}n_j|e^{itL}f_j|^2\right\|_{L^q(\mathcal{I},L^p(X))} \leq C\left\|\{n_j\}_{j\in J}\right\|_{\ell^\frac{2p}{p+1}},
		\end{equation}
		holds for any (possibly infinite) orthonormal system $\{f_j\}_{j\in J}$ in $L^2(X)$ and any sequence $\{n_j\}_{j\in J}$ in $\mathbb{C}$,
		where $C>0$ is independent of $\{f_j\}_{j\in J}$ and $\{n_j\}_{j\in J}$.\\
		
		(ii) If 
		\begin{align*}
			\begin{cases}  1\leq n<2, \\
				\frac{n+1}{n-1}\leq p\leq \infty,
			\end{cases}or ~~ \begin{cases}  n=2  \\
				3\leq p<\infty,
			\end{cases} or ~~ \begin{cases}  n>2\\
				\frac{n+1}{n-1}\leq p\leq \frac{n}{n-2},
			\end{cases}
		\end{align*}   then for  any $1\leq\lambda<q$
		\begin{equation}\label{eq2}
			\left\|\sum_{j\in J}n_j|e^{itL}f_j|^2\right\|_{L^q(\mathcal{I},L^p(X))} \leq C\left\|\{n_j\}_{j\in J}\right\|_{\ell^\lambda},
		\end{equation}
		holds for any (possibly infinite) orthonormal system $\{f_j\}_{j\in J}$ in $L^2(X)$ and any sequence $\{n_j\}_{j\in J}$ in $\mathbb{C}$,
		where $C>0$ is independent of $\{f_j\}_{j\in J}$ and $\{n_j\}_{j\in J}$.
	\end{theorem}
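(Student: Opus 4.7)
The plan is to dualize the orthonormal inequality into a bilinear Schatten inequality on $\mathcal{T}=AA^{\ast}$, where $Af(x,t)=e^{itL}f(x)$, apply Lemma \ref{Schatten}, and fill in the remaining exponents via complex interpolation in Schatten spaces (Lemma \ref{Schatten-bound}). Setting $\tilde p=\tfrac{2p}{p-1}$ and $\tilde q=\tfrac{2q}{q-1}$, Lemma \ref{duality-principle} shows that
\[
\Bigl\|\sum_{j}n_j|e^{itL}f_j|^2\Bigr\|_{L^q(\mathcal{I},L^p(X))}\lesssim \|\{n_j\}\|_{\ell^{\Lambda}}
\]
is equivalent to the bilinear Schatten bound
\[
\|W\mathcal{T}\overline{W}\|_{\mathfrak{S}^{\Lambda'}(L^2(\mathcal{I},L^2(X)))}\lesssim \|W\|^2_{L^{\tilde q}(\mathcal{I},L^{\tilde p}(X))},
\]
and a direct computation shows that the scaling condition $\tfrac{2}{q}+\tfrac{n}{p}=n$ translates into the admissibility condition $\tfrac{2}{\tilde q}+\tfrac{n}{\tilde p}=1$ required by Lemma \ref{Schatten}.

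For part (i), when $p\in[\tfrac{n+2}{n},\tfrac{n+1}{n-1})$ we have $\tilde p\in(n+1,n+2]$, so Lemma \ref{Schatten} applies directly with $\Lambda'=\tilde p$, and dualization gives $\Lambda=\tfrac{2p}{p+1}$. For the remaining subrange $p\in[1,\tfrac{n+2}{n})$, I would complex-interpolate in the weight exponents via Lemma \ref{Schatten-bound} (using an analytic family of the form $|W|^{1+z}\operatorname{sgn}W$) between the endpoint just obtained and the trivial $\mathfrak{S}^{\infty}$ bound at $p=1$, namely
\[
\|W\mathcal{T}\overline{W}\|_{\mathrm{op}}\lesssim \|W\|^2_{L^2(\mathcal{I},L^\infty(X))},
\]
which is immediate from the unitarity of $e^{isL}$ on $L^2(X)$ together with Minkowski's integral inequality. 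Because both endpoints sit on the same scaling line and dualize to $\ell^{2p/(p+1)}$, the interpolated coefficient norm remains $\ell^{2p/(p+1)}$ throughout $p\in[1,\tfrac{n+1}{n-1})$.

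For part (ii), the dual Schatten exponent $\tilde p=\tfrac{2p}{p-1}$ drops below $n+1$, so Lemma \ref{Schatten} is unavailable. The case $\lambda=1$ is immediate: in each of the three sub-cases the pair $(2p,2q)$ lies within the admissible range of Theorem \ref{Strichartz-single}, and the triangle inequality gives
\[
\Bigl\|\sum_j n_j|e^{itL}f_j|^2\Bigr\|_{L^q(\mathcal{I},L^p(X))}\le \sum_j|n_j|\,\|e^{itL}f_j\|^2_{L^{2q}(\mathcal{I},L^{2p}(X))}\lesssim \|\{n_j\}\|_{\ell^1}.
\]
To reach $\ell^{\lambda}$ for every $1<\lambda<q$ at the same $(p,q)$, I would interpolate (again via Lemma \ref{Schatten-bound}) between this $\ell^1$ estimate and an endpoint estimate of part (i) at $p_0=\tfrac{n+1}{n-1}$; since part (i) is stated with a strict inequality at $p_0$, the endpoint bound is recovered by combining the Lorentz refinement of Strichartz (Theorem \ref{Lorentz-single}) with the Lorentz duality principle (Remark \ref{Duality-Lorentz}), yielding a valid Schatten bound with weak-type time norms. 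Varying the interpolation parameter then sweeps $\lambda$ through the open interval $(1,q)$.

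The main obstacle is precisely the endpoint $p=\tfrac{n+1}{n-1}$: both part (i) and Lemma \ref{Schatten} sit at the edge of their range of validity there. Pushing the Lorentz refinement all the way to this boundary and verifying the at-most-exponential growth hypothesis of Lemma \ref{Schatten-bound} for the resulting analytic family is the most delicate step, though structurally parallel to the argument in the proof of Lemma \ref{Schatten}.
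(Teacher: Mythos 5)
Your part (i) matches the paper's argument essentially step for step: dualize via Lemma \ref{duality-principle} to a bilinear Schatten bound on $\mathcal{T}=AA^{\ast}$, observe that Lemma \ref{Schatten} covers $\tilde p\in(n+1,n+2]$ (i.e. $p\in[\tfrac{n+2}{n},\tfrac{n+1}{n-1})$), and interpolate with the trivial $\mathfrak{S}^\infty$ bound at $p=1$ (which dualizes to $W\in L^2(\mathcal{I},L^\infty(X))$) to fill in $p\in[1,\tfrac{n+2}{n})$. The only cosmetic difference is that the paper invokes the abstract complex interpolation of Bergh--L\"ofstr\"om for this last step rather than building an explicit analytic family, and observes the dual Schatten exponent tracks $\ell^{2p/(p+1)}$ along the scaling line; your check that both endpoints dualize correctly is the same computation.

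Part (ii) as written has a genuine gap, and the Lorentz side-step you propose does not repair it. You propose to interpolate between the $\ell^1$ bound \emph{at the target} $(p,q)$ and a bound of part (i) at $p_0=\tfrac{n+1}{n-1}$. In the dualized picture these two endpoints have different weight exponents $(\tilde p,\tilde q)$, both lying on the scaling line $\tfrac{2}{\tilde q}+\tfrac{n}{\tilde p}=1$, so the interpolated bound lives at an intermediate $(\tilde p_\theta,\tilde q_\theta)$ --- you only recover your target $(p,q)$ at $\theta=0$, where $\lambda=1$. Lemma \ref{Schatten-bound} has no mechanism for holding the weight exponents fixed while sliding only the Schatten index. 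Moreover the Lorentz refinements in the paper (Theorem \ref{Lorentz-single} and the Lorentz remark after Theorem \ref{OSI}) still impose the strict inequality $p<\tfrac{n+1}{n-1}$, so they do not supply an endpoint Schatten bound at $p_0$ either. The paper's actual route is different in a way that matters: one interpolates between the $\ell^1$ bound at the \emph{Keel--Tao endpoint} of the scaling line --- $(q,p)=(1,\tfrac{n}{n-2})$ for $n>2$, with analogous choices for $n=2$ and $1\le n<2$ --- and part (i) bounds at points $p_\varepsilon<p_0$ with $\lambda_\varepsilon=\tfrac{2p_\varepsilon}{p_\varepsilon+1}$. This is a two-parameter family: for each target $(p,q)$ in (ii) one first picks $p_\varepsilon$ close to $p_0$ and then picks the interpolation parameter $\theta$ so that $(p_\theta,q_\theta)=(p,q)$; letting $p_\varepsilon\to p_0^-$ pushes $\lambda_\theta$ arbitrarily close to (but strictly below) $q$, and since $\ell^{\lambda'}\hookrightarrow\ell^\lambda$ for $\lambda'<\lambda$, the bound for every $\lambda<q$ follows. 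So you have the right ingredients (the $\ell^1$ endpoint, part (i), an interpolation), but the endpoints of your interpolation must be the two extreme points of the scaling line, not the target itself, and the Lorentz detour can simply be dropped.
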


	\begin{proof} 
		For the time being, assuming part $(i)$, one can notice that,  using the   triangle inequality and  the Strichartz estimate for a single function in  Part (1) of Theorem  \ref {Strichartz-single}, 
		\begin{equation}\label{Larger}
			\left\|\sum_{j\in J}n_j|e^{itL}f_j|^2\right\|_{L^q(\mathcal{I},L^p(X))} \leq  \sum_{j\in J}|n_j| \left\||e^{itL}f_j|^2\right\|_{L^q(\mathcal{I},L^p(X))} \leq C\sum_{j\in J}|n_j|,
		\end{equation}
		which gives  \eqref{eq2} for $\lambda=1$  without making use of the orthogonality of the system $\{f_j\}_{j\in J}$ in $L^2(X)$. However, by capitalizing on the orthogonality of the $f_j$,  in part $(i)$, we see that it holds for $\lambda=\frac{2p}{p+1}>1$.   Of course, the above trivial inequality \eqref{Larger} can be proved for a larger range of $p$ than the range $ 1\leq p< \frac{n+1}{n-1} $ as in part $(i)$. In fact, it holds for all such exponential pairs $(q, p)$  that satisfy part (1) of  Theorem  \ref{Strichartz-single}, i.e., Strichartz estimates for a single function associate with $L$. Particularly,  it holds for  the Keel-Tao endpoint $(q, p)=\left(1, \frac{n}{n-2}\right)$ for $n>2 $  and $(q, p)=\left(  \frac{2}{n}, \infty\right)$ for $1\leq n<2$ with $\lambda=1$.
		Now, as pointed out in \cite{FS2016}, 
		\begin{itemize}
			\item for $n>2$, the estimates in $(ii)$ come out from those in $(i)$ by interpolation between $(q,p)=(1,\frac{n}{n-2})$ (with $\lambda=1$)  and points arbitrarily close to $(q,p)=(\frac{n+1}{n},\frac{n+1}{n-1})$  (with $\lambda=\frac{2p}{p+1}$);
			\item for $n=2$, the estimates in $(ii)$ come out from those in $(i)$ by interpolation between points arbitrarily close to $(q,p)=(1,\infty)$   (with $\lambda=1$) and points arbitrarily close to $(q,p)=(\frac{n+1}{n},\frac{n+1}{n-1})$ (with $\lambda=\frac{2p}{p+1}$);  
			\item   for $1\leq n<2$, the estimates in $(ii)$ come out from those in $(i)$ by interpolation between $(q,p)=(\frac{2}{n},\infty)$  (with $\lambda=1$) and points arbitrarily close to $(q,p)=(\frac{n+1}{n},\frac{n+1}{n-1})$ (with $\lambda=\frac{2p}{p+1}$). 
		\end{itemize}   From the above discussion, in order to prove part $ (ii),$ it's enough to prove  part $(i)$. From the fact that the operator $e^{itL}$ is unitary on $L^2(X)$, it follows that
		\begin{equation*}
			\left\|\sum_{j\in J}n_j|e^{itL}f_j|^2\right\|_{L^\infty(\mathcal{I},L^1(X))} \leq \sum_{j\in J}|n_j|,
		\end{equation*}
		for any (possibly infinite) orthonormal system $\{f_j\}_{j\in J}$ in $L^2(X)$ and any sequence $\{n_j\}_{j\in J}$ in $\mathbb{C}$. That means equation (\ref{eq1}) holds for $(q, p)=(\infty, 1)$ and  by  Lemma \ref{duality-principle}, equivalently, we can say that  the operator
		\begin{equation*}
			W\in L^2(\mathcal{I},L^\infty(X))\mapsto W\mathcal{T}\overline{W} \in\mathfrak{S}^\infty(L^2(\mathcal{I},L^\infty(X)))
		\end{equation*}
		is bounded. Again, by  Theorem \ref{Strichartz-single}, Lemma \ref{duality-principle} and \ref{Schatten}, the operator 
		\begin{equation*}
			W\in L^{n+2}(\mathcal{I},L^{n+2}(X))\mapsto W\mathcal{T}\overline{W} \in\mathfrak{S}^{n+2}(L^2(\mathcal{I},L^2(X)))
		\end{equation*}
		is also bounded. Applying the complex interpolation method in  \cite[Chapter 4]{BL}, the operator
		\begin{equation*}
			W\in L^q(\mathcal{I},L^p(X))\mapsto W\mathcal{T}\overline{W} \in\mathfrak{S}^p(L^2(\mathcal{I},L^2(X)))
		\end{equation*}
		is bounded provided that $p,q\geq 1$ such that
		\begin{equation*}
			\frac{2}{q}+\frac{n}{p}=n,\;\text{with}\; n+2\leq p\leq\infty.
		\end{equation*}
		Combined with Lemma \ref{Schatten}, we have 
		\begin{equation*}
			\big\|W_1\mathcal{T}W_2\big\|_{\mathfrak{S}^p(L^2(\mathbb{I}, L^2(X)))}\leq C\big\|W_1\big\|_{L^q(\mathbb{I}, L^p(X))}\big\|W_2\big\|_{L^q(\mathbb{I}, L^p(X))},
		\end{equation*} 
		holds true for any $p,q\geq 1$ satisfying
		\begin{equation*}
			\frac{2}{q}+\frac{n}{p}=n,\;\text{with}\; n+1< p\leq\infty.
		\end{equation*}
		Finally, by the duality principle Lemma \ref{duality-principle}, we obtain our desired result $(i)$.
	\end{proof}
	
	\begin{remark} Using the duality principle in Lorentz spaces in Remark \ref{Duality-Lorentz}, the Hardy-Littlewood-Sobolev inequality, and Strichartz estimates for a single function in Lorentz spaces in Remark \ref{Strichartz-Lorentz}, argued similarly as the proof of Theorem \ref{OSI}, we can also prove orthonormal Strichartz versions in Lorentz spaces,  which is in fact crucial to obtain the orthonormal Strichartz estimates of initial data with regularity for the classical Schr\"odinger operator in \cite{BHLNS} when $L$ is the classical Laplacian operator.
		
		Assume $L$ is a non-negative self-adjoint operator on $L^2(X)$ satisfying Assumption $(\mathcal{A})$ and $(\mathcal{B})$. Suppose $p,q,r\geq 1$ satisfies
		\begin{equation*}
			\frac{2}{q}+\frac{n}{p}=n.
		\end{equation*}
		(i) If $1\leq p<\frac{n+1}{n-1}$, then 
		\begin{equation*}
			\left\|\sum_{j\in J}n_j|e^{itL}f_j|^2\right\|_{L^{q,r}(\mathcal{I},L^p(X))} \leq C\left\|\{n_j\}_{j\in J}\right\|_{\ell^\frac{2p}{p+1}},
		\end{equation*}
		holds for any (possibly infinite) orthonormal system $\{f_j\}_{j\in J}$ in $L^2(X)$ and any sequence $\{n_j\}_{j\in J}$ in $\mathbb{C}$,
		where $C>0$ is independent of $\{f_j\}_{j\in J}$ and $\{n_j\}_{j\in J}$.\\
		(ii) If $1\leq n<2$ and $\frac{n+1}{n-1}\leq p\leq \infty$, $n=2$ and $3\leq p<\infty$, or $n>2$ and $\frac{n+1}{n-1}\leq p\leq \frac{n}{n-2}$, then for $1\leq\lambda<q$
		\begin{equation*}
			\left\|\sum_{j\in J}n_j|e^{itL}f_j|^2\right\|_{L^{q,r}(\mathcal{I},L^p(X))} \leq C\left\|\{n_j\}_{j\in J}\right\|_{\ell^\lambda},
		\end{equation*}
		holds for any (possibly infinite) orthonormal system $\{f_j\}_{j\in J}$ in $L^2(X)$ and any sequence $\{n_j\}_{j\in J}$ in $\mathbb{C}$,
		where $C>0$ is independent of $\{f_j\}_{j\in J}$ and $\{n_j\}_{j\in J}$.
	\end{remark}
	
	\begin{remark} Indeed, our method is general and can also be applied to obtain orthonormal Strichartz estimates for a general class of dispersive semigroup associated with $L$. Let $\phi: \mathbb{R}^+\rightarrow \mathbb{R}$ be smooth and $\psi\in C_c^\infty([\frac{1}{2},2])$. We consider the truncated (or called frequency localized) dispersive semigroup $U(t)=e^{it\phi(L)}\psi(\sqrt{L})$. It is easy to see that the operator obeys the energy estimate:
		\begin{equation*}
			\|U(t)f\|_{L^2(X)}\lesssim_\psi \|f\|_{L^2(X)},
		\end{equation*}
		for all $t\in\mathbb{R}$ and $f\in L^2(X)$. We ssume that
		
		\underline{Assumption $(\mathcal{A}')$} $U(t)$ is  an integral with kernel $K_{it}(x,y)$, i.e., 
		\begin{equation*}
			U(t)f(x)=\int_X K_{it}(x,y)f(y)d\mu(y), \quad   \text{a.e.}~x\in X,
		\end{equation*}
		for every $f\in L^2(X)$. 
		
		\underline{Assumption $(\mathcal{B}')$} The kernel $K_{it}(x,y)$of the operator $U(t)$ satisfies an uniform decay estimate:
		\begin{equation*}
			\sup_{x,y\in X}|K_{it}(x,y)|\lesssim_{\phi,\psi} |t|^{-\frac{n}{2}},\,|t|<T_0, \text{ for some }n\geq1,
		\end{equation*}
		where $T_0\in(0,+\infty]$. 
		
		By Keel-Tao \cite{KT}, we obtain the Strichartz estimate for $U(t)$ involving a single function. 
		\begin{equation*}
			\|U(t)f\|_{L^{2q}(\mathcal{I},L^{2p}(X))}\lesssim_{\phi,\psi} \|f\|_{L^2(X)},
		\end{equation*}
		holds for any $p,q\geq1$ satisfying $\frac{2}{q}+\frac{n}{p}=n$ except for $(q,p,n)=(1,\infty,2)$.   Argued the same as in the proof of Theorem \ref{OSI}, we establish the following orthonormal Strichartz estimates. 
		\begin{theorem}\label{truncated}
			Assume $L$ is a non-negative self-adjoint operator on $L^2(X)$ and $U(t)=e^{it\phi(L)}\psi(\sqrt{L})$ satisfies Assumption $(\mathcal{A}')$ and $(\mathcal{B}')$. Suppose $p,q\geq 1$ satisfy
			\begin{equation*}
				\frac{2}{q}+\frac{n}{p}=n.
			\end{equation*}
			
			(i) If $1\leq p<\frac{n+1}{n-1}$, then 
			\begin{equation}\label{Eq1}
				\left\|\sum_{j\in J}n_j|U(t)f_j|^2\right\|_{L^q(\mathcal{I},L^p(X))} \lesssim_{\phi,\psi} \left\|\{n_j\}_{j\in J}\right\|_{\ell^\frac{2p}{p+1}},
			\end{equation}
			holds for any (possibly infinite) orthonormal system $\{f_j\}_{j\in J}$ in $L^2(X)$ and any sequence $\{n_j\}_{j\in J}$ in $\mathbb{C}$.\\
			
			(ii) If  \begin{align*}
				\begin{cases}  1\leq n<2, \\
					\frac{n+1}{n-1}\leq p\leq \infty,
				\end{cases}or ~~ \begin{cases}  n=2  \\
					3\leq p<\infty,
				\end{cases} or ~~ \begin{cases}  n>2\\
					\frac{n+1}{n-1}\leq p\leq \frac{n}{n-2},
				\end{cases}
			\end{align*} 
			then for $1\leq\lambda<q$
			\begin{equation}\label{Eq2}
				\left\|\sum_{j\in J}n_j|U(t) f_j|^2\right\|_{L^q(\mathcal{I},L^p(X))} \lesssim_{\phi,\psi} \left\|\{n_j\}_{j\in J}\right\|_{\ell^\lambda},
			\end{equation}
			holds for any (possibly infinite) orthonormal system $\{f_j\}_{j\in J}$ in $L^2(X)$ and any sequence $\{n_j\}_{j\in J}$ in $\mathbb{C}$.
		\end{theorem}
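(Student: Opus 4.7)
The plan is to mimic the proof of Theorem \ref{OSI} nearly line-by-line, replacing $e^{itL}$ by $U(t)$, so the main structural ingredient I need is the analogue of Lemma \ref{Schatten} for $\widetilde{\mathcal{T}} = \widetilde{A}\widetilde{A}^*$ with $\widetilde{A}f(x,t) = U(t)f(x)$. As before, I would truncate the kernel by setting $\mathcal{K}_\varepsilon(x,y,t) = \chi_{\varepsilon<|t|<T_0}(t)\,K_{it}(x,y)$, form the analytic family $\mathcal{K}_{z,\varepsilon}(x,y,t) = t^z\mathcal{K}_\varepsilon(x,y,t)$ with its operator $T_{z,\varepsilon}$, and interpolate between two endpoints in $Re(z)$ via Stein's theorem.

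On the Hilbert-Schmidt side, Assumption $(\mathcal{B}')$ yields the pointwise bound $|\mathcal{K}_{z,\varepsilon}(x,y,t)|\lesssim |t|^{Re(z)-n/2}$, and the Hardy-Littlewood-Sobolev inequality produces the desired $\mathfrak{S}^2$-bound in terms of $L^u(\mathcal{I},L^2(X))$-norms of $W_1,W_2$, with $\tfrac{1}{u}=\tfrac{1}{2}-\tfrac{1}{2}(n/2-Re(z))$, for $Re(z)$ in a suitable subinterval of $((n-1)/2,n/2]$. This step is identical to its counterpart in Lemma \ref{Schatten}, as it relies only on the dispersive decay of the kernel.

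The main obstacle, and the one step that genuinely requires adaptation, is the $Re(z)=-1$ endpoint: in Lemma \ref{Schatten} the unitarity of $e^{-itL}$ was used to reduce the $L^2(\mathcal{I},L^2(X))\to L^2(\mathcal{I},L^2(X))$ operator bound to a Hilbert-transform estimate on $L^2(\mathcal{I})$, whereas $U(t)$ is only bounded on $L^2(X)$. I would bypass this by exploiting the spectral factorisation: since $e^{is\phi(L)}$ and $\psi(\sqrt{L})$ are both functions of $L$ they commute, so
\begin{equation*}
T_{z,\varepsilon} F(x,t) = \psi(\sqrt{L}) \int_{\varepsilon<|s|<T_0} s^{-1+i\,Im(z)}\,e^{is\phi(L)} F(\cdot,t-s)(x)\,ds.
\end{equation*}
Because $\phi$ is real-valued, $e^{it\phi(L)}$ is unitary on $L^2(X)$ by the spectral theorem; so the change of variables $G(x,u)=e^{-iu\phi(L)}F(x,u)$ (which is an $L^2$-isometry fiber-wise in $u$) reduces the $L^2(\mathcal{I},L^2(X))$-boundedness of the inner integral to the same Hilbert-transform estimate on $L^2(\mathcal{I})$ used in Lemma \ref{Schatten}, with constant of at most exponential growth in $|Im(z)|$. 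Combining this with the uniform $L^2(X)\to L^2(X)$ boundedness of the spectral multiplier $\psi(\sqrt{L})$ (which inherits $\|\psi\|_\infty\lesssim 1$) yields the $\mathfrak{S}^\infty$-bound needed for Stein interpolation.

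Stein's analytic interpolation (Lemma \ref{Schatten-bound}) between the two endpoints, followed by $\varepsilon\to 0^+$, delivers
\begin{equation*}
\|W_1\widetilde{\mathcal{T}}W_2\|_{\mathfrak{S}^p(L^2(\mathcal{I},L^2(X)))} \lesssim \|W_1\|_{L^q(\mathcal{I},L^p(X))} \|W_2\|_{L^q(\mathcal{I},L^p(X))}
\end{equation*}
for $\tfrac{2}{q}+\tfrac{n}{p}=1$ and $n+1<p\leq n+2$. I then extend the range to $n+2\leq p\leq\infty$ by interpolating with the trivial $(q,p)=(\infty,1)$ endpoint, which follows from the energy bound $\|U(t)f\|_{L^2(X)}\lesssim \|f\|_{L^2(X)}$ together with the Keel-Tao single-function Strichartz bound \eqref{gebneral} (and which excludes $(q,p,n)=(1,\infty,2)$). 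Applying the duality principle Lemma \ref{duality-principle} converts this Schatten estimate into \eqref{Eq1}, which is part (i). Part (ii) is then obtained exactly as in Theorem \ref{OSI}, by interpolating part (i) at points close to $p=(n+1)/(n-1)$ against the trivial $\lambda=1$ estimate coming from the triangle inequality applied to \eqref{gebneral} at the three Keel-Tao endpoints distinguished by the sub-cases on $n$, where for $n=2$ one must take points close to (but excluding) $(1,\infty)$ to respect the forbidden triple.
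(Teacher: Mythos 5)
Your proposal is essentially correct and follows the same route that the paper intends (the paper gives no explicit proof of Theorem \ref{truncated}, stating only that the argument is the same as for Theorem \ref{OSI}). You correctly isolate the single step that requires real adaptation, namely the $Re(z)=-1$ endpoint of the Stein interpolation, where the original proof used unitarity of $e^{-itL}$ on $L^2(X)$; your spectral factorisation through the unitary group $e^{is\phi(L)}$ together with the $L^2$-boundedness of the spectral multiplier is exactly the right fix, and the treatment of the $(q,p)=(\infty,1)$ endpoint via the uniform $L^2$-bound $\|U(t)f\|_{L^2}\lesssim\|f\|_{L^2}$ in place of unitarity is also correct.

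One subtlety is worth flagging explicitly. Since $U(t)U(s)^*=e^{i(t-s)\phi(L)}|\psi(\sqrt{L})|^2\neq U(t-s)$, the operator $\widetilde{\mathcal{T}}=\widetilde{A}\widetilde{A}^*$ that the duality principle (Lemma \ref{duality-principle}) asks you to bound has integral kernel $\widetilde{K}_{i(t-s)}(x,y)$, where $\widetilde{K}_{it}$ is the kernel of $\widetilde{U}(t):=e^{it\phi(L)}|\psi(\sqrt{L})|^2$, not the kernel $K_{it}$ of $U(t)$ itself. Thus the analytic family should be built from $\widetilde{K}_{it}$, the factorisation at $Re(z)=-1$ produces $|\psi(\sqrt{L})|^2$ (rather than $\psi(\sqrt{L})$) in front of the Hilbert-transform integral, and the dispersive decay you invoke on the $\mathfrak{S}^2$ side must be a decay for $\widetilde{K}_{it}$. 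Assumption $(\mathcal{B}')$ as stated only covers the kernel of $U(t)$, so strictly one needs the (natural, but separate) assumption that $e^{it\phi(L)}\eta(\sqrt{L})$ satisfies the same decay for the cutoff $\eta=|\psi|^2\in C_c^\infty([\frac{1}{2},2])$, or, equivalently, that the decay estimate holds uniformly over such cutoffs. This does not affect the $L^2$-boundedness endpoint (where $\||\psi(\sqrt{L})|^2\|_{L^2\to L^2}\leq\|\psi\|_\infty^2$ works equally well), but it should be recorded as part of the argument.
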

		Indeed, these frequency localized estimates \eqref{Eq1} and \eqref{Eq2} are quite useful. For instance, when $L=-\Delta$,  the classical Laplacian on $X=\mathbb{R}^n$ and $\phi(r)=r$, Bez-Hong-Lee-Nakamura-Sawano \cite{BHLNS} proved the frequency localized estimates for the Schr\"odinger operator $e^{it\Delta}$ (see  \cite[Theorem 1.4]{BHLNS}), which was crucial to establish the orthonormal Strichartz estimates of initial data with regularity. Hence, the frequency localized estimates given in \eqref{Eq1} and \eqref{Eq2} can be regarded as a first step towards the orthonormal Strichartz estimates of initial data with regularity for a general class of dispersive semigroup associated with $L$.
	\end{remark}

	Now	consider  the    operator
	\begin{align}\label{z}
		\gamma :=\sum_{j\in J} n_{j}\left|f_{j}\right\rangle\left\langle f_{j}\right|
	\end{align} 
	which acts on $L^{2}\left(X, \mu\right) .$ Here  the  Dirac's notation \(|u\rangle\langle v|\) stands  for the
	rank-one operator $f \mapsto\langle v, f\rangle u$.  Since  the sequence \(f_{j}\) form an orthonormal system therefore,
	the scalers, \(n_{j}\) are precisely the eigenvalues of the operator $\gamma$. The evolved operator
	$$\gamma(t) :=e^{i t L} \gamma e^{-i t L}=\sum_{j\in J} n_{j}\left|e^{i t L} f_{j}\right\rangle\left\langle e^{i t L} f_{j}\right| $$
	solves (in a weak sense) the von-Neumann Schrödinger equation associated with the operator $L$ 
	$$i \dot{\gamma}(t)=[L, \gamma(t)], \quad  \gamma(0)=\gamma ,$$
	where  $[A, B]$ denotes the commutator between two operators $A$ and $B$.  Let us introduce the density  
	\begin{align}\label{y}
		\rho_{\gamma(t)} :=\sum_{j\in J} n_{j}\left|e^{i t L} f_{j}\right|^{2}.
	\end{align} 
	Then with respect to the notions defined above, the inequality   (\ref{eq1}) can be  rewritten  as
	\begin{align}\label{7}
		\left\|\rho_{\gamma(t)}\right\|_{L^{q}\left(\mathcal{I}, L^{p}\left(X\right)\right)} \leqslant C_S \|\gamma\|_{\mathfrak{S}^{\frac{2 p}{p+1}}},\end{align}
	where the norm 
	$$
	\|\gamma\|_{\mathfrak{S}^{\frac{2 p}{p+1}}} :=\left(\sum_{j\in J}\left|n_{j}\right|^{\frac{2 p}{p+1}}\right)^{\frac{p+1}{2 p}}
	$$
	is called the Schatten norm of the operator \(\gamma\).  
	
	Again, for any  multiplication operator $V$   on $L^2(X)$,  
	$V(x)\gamma(t)$ is an integral operator with kernel is given $$K(x, y)=V(x)\sum_{j\in J} n_j e^{itL}u_j(x) \overline{e^{itL}u_j(y)}.$$
	Then 
	\begin{align*}
		\operatorname{Tr}(V(x)\gamma(t))&=\int_{ {X}} K(x, x) dx\\
		&=\int_{ {X}} V(x)\sum_{j\in J} n_j \left|e^{itL}u_j(x)\right|^2  dx\\
		&=\int_{ {X}} V(x)\rho_{\gamma}(x) ~dx.
	\end{align*}  
	Moreover,  for a time-dependent potential $V(t, x) $, we therefore obtain
	$$
	\begin{aligned}
		\left|\operatorname{Tr}\left( \gamma \int_{\mathcal{I}} e^{-i t L
		} V(t, x) e^{i t L} d t\right) \right|&=\left|\int_{\mathcal{I}}  \operatorname{Tr}\left(V(t, x) e^{i t L} \gamma e^{-i t L}\right) d t\right| \\
		&=\left|\int_{\mathcal{I}}  \operatorname{Tr}\left(V(t, x)   \gamma (t)\right) d t\right| \\		& =\left|\int_{\mathcal{I}} \int_{X} V(t, x) \rho_{\gamma(t)}(x) d x d t\right|\\ &\leq\|V\|_{L^{q^{\prime}}\left(\mathcal{I}, L^{p^{\prime}}\left(X\right)\right)}\left\|\rho_{\gamma(t)}\right\|_{L^q\left(\mathcal{I}, L^p\left(X\right)\right)}.
	\end{aligned}
	$$
	Thus  by duality argument, \eqref{7}   turns out to be equivalent as
	\begin{align}\label{duali}
		\left\| \int_{\mathcal{I}} e^{-i t L}V(t, x) e^{i t L} d t\right\|_{\mathfrak{S}^{2p'}} \leq C_S \|V\|_{L^{q^{\prime}}\left(\mathcal{I}, L^{p^{\prime}}\left(X\right)\right)}
	\end{align}   
	under the condition 	$$
	\frac{2}{q'}+\frac{n}{p'}=2,\quad \frac{n+1}{2}<p'\leq \infty.
	$$

	The inequality  (\ref{duali})  can be use  to prove an inhomogeneous  Strichartz inequality. Consider the  inhomogeneous system
	\begin{align}\label{Inhomogeneous}
		\left\{\begin{array}{l}
			i \partial_t {\gamma}(t)=[L, \gamma(t)]+i R(t), \\
			\gamma |_{t=0}=0,
		\end{array}\right.
	\end{align}
	where $R(t)$ is a self-adjoint operator on $L^2\left(X\right)$ and   bounded for almost every $t$. Then by   Duhamel’s principle, the  solution  to the inhomogeneous system (\ref{Inhomogeneous}) can be written as
	$$
	\gamma(t)= \int_{0}^t e^{i(t-s) L} R(s) e^{i(s-t) L} d s .$$

	\begin{theorem}[Inhomogeneous Strichartz inequality]\label{Innn}
		Assume $L$ is a non-negative self-adjoint operator on $L^2(X)$ satisfying Assumption $(\mathcal{A})$ and $(\mathcal{B})$. Assume that $p, q, n \geq 1$ satisfy
		$$
		1\leq p <\frac{n+1}{n-1}\text { and } \frac{2}{q}+\frac{n}{p}=n.
		$$
		Then  the solution to the  inhomogeneous system (\ref{Inhomogeneous})
		satisfies
		$$
		\left\|\rho_{\gamma(t)}\right\|_{L^{q}\left(\mathcal{I}, L^{p}\left(X\right)\right)} \leqslant C_S \left\|\int_{\mathcal{I}} e^{-i s L} R(s)  e^{i s L} d s\right\|_{\mathfrak{S}^{\frac{2 p}{p+1}}}
		$$
		for a constant $C_S>0$ independent of $R$.
	\end{theorem}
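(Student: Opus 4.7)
By duality in $L^{q}(\mathcal{I},L^{p}(X))$ the inequality is equivalent to the bilinear bound
\[
\Bigl|\int_{\mathcal{I}}\int_{X}V(t,x)\,\rho_{\gamma(t)}(x)\,dx\,dt\Bigr|
\leq C_{S}\,\|V\|_{L^{q'}(\mathcal{I},L^{p'}(X))}\,\bigl\|\tilde R\bigr\|_{\mathfrak{S}^{2p/(p+1)}}
\]
for every test potential $V$, where $\tilde R:=\int_{\mathcal{I}}e^{-isL}R(s)e^{isL}\,ds$ is the operator appearing on the right-hand side of the theorem. Rewriting the pairing as $\int_{\mathcal{I}}\operatorname{Tr}(V(t)\gamma(t))\,dt$, substituting the Duhamel formula for $\gamma(t)$, and using cyclicity of the trace recasts the left-hand side as
\[
\int_{\mathcal{I}}dt\int_{0}^{t}ds\;\operatorname{Tr}\!\bigl(\tilde V(t)\,\tilde R(s)\bigr),\qquad
\tilde V(t):=e^{-itL}V(t)e^{itL},\ \ \tilde R(s):=e^{-isL}R(s)e^{isL}.
\]

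I would first handle the untruncated analogue. Extending the inner integral to all of $\mathcal{I}$ collapses the double integral, by Fubini and linearity of the trace, to $\operatorname{Tr}(\tilde V\cdot\tilde R)$ where $\tilde V=\int_{\mathcal{I}}\tilde V(t)\,dt$. Schatten--H\"older with the conjugate pair $(2p',\,2p/(p+1))$, which indeed satisfies $\tfrac{1}{2p'}+\tfrac{p+1}{2p}=1$, gives
\[
|\operatorname{Tr}(\tilde V\,\tilde R)|\le\|\tilde V\|_{\mathfrak{S}^{2p'}}\|\tilde R\|_{\mathfrak{S}^{2p/(p+1)}},
\]
while \eqref{duali}, which is equivalent to the homogeneous orthonormal Strichartz inequality via Lemma~\ref{duality-principle}, furnishes $\|\tilde V\|_{\mathfrak{S}^{2p'}}\le C_{S}\|V\|_{L^{q'}(L^{p'})}$. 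Combining these two bounds delivers the desired control on the untruncated bilinear form.

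The remaining and most delicate step is to restore the retarded truncation $\chi_{\{0<s<t\}}$. My plan is to apply a Christ--Kiselev-type argument: swapping the order of integration writes the retarded form as $\int_{\mathcal{I}}\operatorname{Tr}(\tilde R(s)\,\Psi(s))\,ds$ with $\Psi(s):=\int_{s}^{T_{0}}\tilde V(t)\,dt$, and the strict hypothesis $p<(n+1)/(n-1)$ places us in the non-endpoint regime so that the characteristic function $\chi_{\{s<t\}}$ costs only a constant when the pairing is estimated against the $\mathfrak{S}^{2p'}$-valued quantity $\Psi$. Making this rigorous is the principal technical obstacle, since the textbook statement of Christ--Kiselev is phrased for scalar kernels acting on Lebesgue spaces, whereas one side of our pairing lives in a Schatten class; a dyadic decomposition of the $t$-integral in $\Psi(s)$, with each dyadic piece bounded through \eqref{duali} and the pieces then resummed, should supply the required conclusion. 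Once this step is in place, taking the supremum over $V$ completes the proof; all other ingredients --- duality, trace cyclicity, Schatten H\"older, and \eqref{duali} --- are formal consequences of material already established in the paper.
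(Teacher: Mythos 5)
Your argument runs parallel to the paper's own proof step by step: pass to the dual pairing $\int_{\mathcal{I}}\operatorname{Tr}(V(t)\gamma(t))\,dt$, substitute the Duhamel formula, use cyclicity of the trace to pull out $\tilde V(t)=e^{-itL}V(t)e^{itL}$ and $\tilde R(s)=e^{-isL}R(s)e^{isL}$, bound the untruncated bilinear form by Schatten--H\"older with exponents $(2p',\,2p/(p+1))$, and invoke \eqref{duali} for the $\mathfrak{S}^{2p'}$ control on $\int_{\mathcal{I}}\tilde V(t)\,dt$. So far this is exactly what the paper does, not a different route.

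Where you diverge is in how the retardation $\chi_{\{0<s<t\}}$ is to be removed. The paper disposes of this in a single displayed line, passing directly from
\[
\left|\int_{\mathcal{I}}\int_0^t\operatorname{Tr}\bigl(\tilde V(t)\,\tilde R(s)\bigr)\,ds\,dt\right|
\quad\text{to}\quad
\operatorname{Tr}\Bigl(\Bigl(\int_{\mathcal{I}}\tilde V(t)\,dt\Bigr)\Bigl(\int_{\mathcal{I}}\tilde R(s)\,ds\Bigr)\Bigr),
\]
with no further comment. Since the integrand $\operatorname{Tr}(\tilde V(t)\tilde R(s))$ is signed (indeed complex-valued in general), this single step is not a literal inequality and does need justification; your instinct that this is where the real work hides, and that a Christ--Kiselev-type argument is the natural device, is correct and in fact more careful than the paper's presentation. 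What your proposal lacks is the actual execution of that step: you observe, rightly, that the textbook Christ--Kiselev lemma is phrased for scalar kernels on Lebesgue spaces, and you sketch a dyadic decomposition but do not carry it through. Be aware also that the numerology is not entirely comfortable: from $\tfrac{2}{q}+\tfrac{n}{p}=n$ with $1\le p<\tfrac{n+1}{n-1}$ one gets $q>\tfrac{n+1}{n}$, and for $n>1$ this allows $q<2$, so the standard scalar Christ--Kiselev hypothesis $q>q'$ fails in part of the range and the operator-valued formulation would need to be adapted accordingly. In short, your proof proposal is the paper's own argument, made more honest about its weakest step but left incomplete precisely there.
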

	\begin{proof}
		We prove this result using the duality argument.  
		Now  for a time-dependent potential $V(t, x) $, we have
		$$
		\begin{aligned}
			\left|  \int_{\mathcal{I}} \int_{X} V(t,  x) \rho_{\gamma(t)}(x) d x d t \right|&=	\left|  \int_{\mathcal{I}} \operatorname{Tr}(V(t, x) \gamma(t)) d t \right| \\
			& =\left| \int_{\mathcal{I}}    \int_{0}^t  \operatorname{Tr}\left(  V(t, x) e^{i t L} e^{-i s L} R(s) e^{i s L} e^{-i t L}\right) d s d t\right| \\
			& =\left| \int_{\mathcal{I}}    \int_{0}^t  \operatorname{Tr}\left(e^{-i t L} V(t, x) e^{i t L} e^{-i s L} R(s) e^{i s L}\right) d s d t\right| \\
			& \leq \operatorname{Tr}\left(\left(\int_{\mathcal{I}} e^{-i t L} V(t, x)  e^{i t L} d t\right)\left(\int_{\mathcal{I}}  e^{-i s L} R(s) e^{i s L} d s\right)\right),\\
			& \leq  \left\| \int_{\mathcal{I}} e^{-i t L} V(t, x) e^{i t L} d t\right\|_{\mathfrak{S}^{2p'}}  \left\| \int_{\mathcal{I}} e^{-i sL} R(s) e^{i s L} d s\right\|_{\mathfrak{S}^{\frac{2 p}{p+1}}},
		\end{aligned}
		$$
		where in the last line we used  Hölder's inequality for traces.  Now using the orthonormal Strichartz estimates for the homogeneous version (\ref{duali}),  we get  
		$$
		\begin{aligned}
			\left|  \int_{\mathcal{I}} \int_{X} V(t,  x) \rho_{\gamma(t)}(x) d x d t \right| 
			& \leq  C_S\left\|  V \right\|_{L^{q'}\left(\mathcal{I}, L^{p'}\left(X\right)\right)}   \left\| \int_{\mathcal{I}} e^{-i sL} R(s) e^{i s L} d s\right\|_{\mathfrak{S}^{\frac{2 p}{p+1}}}.
		\end{aligned}
		$$  Finlly using the duality argument, we get 
		$$	\left\|\rho_{\gamma(t)}\right\|_{L^{q}\left(\mathcal{I}, L^{p}\left(X\right)\right)} \leq   C_S \left\| \int_{\mathcal{I}} e^{-i sL} R(s)  e^{i s L} d s\right\|_{\mathfrak{S}^{\frac{2 p}{p+1}}}.$$
	\end{proof}

	Now consider the general inhomogeneous system
	\begin{align}\label{inhomogeneous}
		\left\{\begin{array}{l}
			i \partial_t {\gamma}(t)=[L, \gamma(t)]+i R(t), \\
			\gamma |_{t=0}=\gamma_0,
		\end{array}\right.
	\end{align}
	where $R(t)$ is a self-adjoint operator on $L^2\left(X\right)$ which is bounded for almost every $t$. Then by   Duhamel’s principle, the  solution  to the inhomogeneous system (\ref{inhomogeneous}) can be written as
	$$
	\gamma(t)=e^{i t L} \gamma_0 e^{-i t L}+\int_{0}^t e^{i(t-s) L} R(s) e^{i(s-t) L} d s .$$
	Thus using (\ref{7}) and Theorem \ref{Innn}, we immediately  can conclude that 	\begin{theorem}\label{In}
		Assume $L$ is a non-negative self-adjoint operator on $L^2(X)$ satisfying Assumption $(\mathcal{A})$ and $(\mathcal{B})$. Assume that $p, q, n \geq 1$ satisfy
		$$
		1\leq p <\frac{n+1}{n-1} \text { and } \frac{2}{q}+\frac{n}{p}=n
		$$
		and let $\gamma_0\in {\mathfrak{S}^{\frac{2p}{p+1}}}$. Then  the solution to the  inhomogeneous system (\ref{inhomogeneous})
		satisfies
		$$
		\left\|\rho_{\gamma(t)}\right\|_{L^{q}\left(\mathcal{I}, L^{p}\left(X\right)\right)} \leqslant C_S\left( \|\gamma_0\|_{\mathfrak{S}^{\frac{2 p}{p+1}}}+ \left\|\int_{\mathcal{I}} e^{-i s L} R(s)  e^{i s L} d s\right\|_{\mathfrak{S}^{\frac{2 p}{p+1}}}\right)
		$$
		for a constant $C_S>0$ independent of $\gamma_0$ and $R$.
	\end{theorem}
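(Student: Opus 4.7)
The plan is to combine the homogeneous orthonormal Strichartz inequality with the inhomogeneous estimate already proved in Theorem \ref{Innn}. First, I would invoke Duhamel's principle to split the solution of \eqref{inhomogeneous} as
\begin{equation*}
\gamma(t)=\gamma_{1}(t)+\gamma_{2}(t), \qquad \gamma_{1}(t)=e^{itL}\gamma_{0}e^{-itL},\quad \gamma_{2}(t)=\int_{0}^{t}e^{i(t-s)L}R(s)e^{i(s-t)L}\,ds,
\end{equation*}
observing that $\gamma_{1}$ solves the homogeneous von Neumann--Schr\"odinger equation with datum $\gamma_{0}$, while $\gamma_{2}$ solves the inhomogeneous problem with zero initial data, so the uniqueness of the decomposition matches the setup of Theorem \ref{Innn}.

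The second step is to transfer this splitting to the density. Since the map $\gamma\mapsto \rho_{\gamma}$ is linear (by definition \eqref{y}, or equivalently as the diagonal of the integral kernel), one has $\rho_{\gamma(t)}=\rho_{\gamma_{1}(t)}+\rho_{\gamma_{2}(t)}$, and hence by the triangle inequality in the mixed Lebesgue norm,
\begin{equation*}
\bigl\|\rho_{\gamma(t)}\bigr\|_{L^{q}(\mathcal{I},L^{p}(X))} \leq \bigl\|\rho_{\gamma_{1}(t)}\bigr\|_{L^{q}(\mathcal{I},L^{p}(X))} + \bigl\|\rho_{\gamma_{2}(t)}\bigr\|_{L^{q}(\mathcal{I},L^{p}(X))}.
\end{equation*}

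For the first term, I would diagonalize $\gamma_{0}=\sum_{j\in J}n_{j}|f_{j}\rangle\langle f_{j}|$ using the spectral theorem (since $\gamma_{0}\in \mathfrak{S}^{\frac{2p}{p+1}}$ is compact and may be assumed self-adjoint, or else reduced to the self-adjoint case via polar decomposition) so that $\rho_{\gamma_{1}(t)}=\sum_{j}n_{j}|e^{itL}f_{j}|^{2}$; then Theorem \ref{OSI}(i), i.e.\ the reformulation \eqref{7}, yields the bound by $C_{S}\|\gamma_{0}\|_{\mathfrak{S}^{2p/(p+1)}}$. For the second term, Theorem \ref{Innn} applies directly and gives the bound by $C_{S}\|\int_{\mathcal{I}}e^{-isL}R(s)e^{isL}\,ds\|_{\mathfrak{S}^{2p/(p+1)}}$. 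Adding these two bounds produces the stated inequality.

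I do not expect any serious obstacle: the key analytic content (the duality-Schatten interpolation in Lemma \ref{Schatten}, the kernel dispersive estimate, and the inhomogeneous estimate) is entirely encapsulated in \eqref{7} and Theorem \ref{Innn}. The only minor point worth spelling out is the linearity of $\gamma\mapsto \rho_{\gamma}$ and the justification that $\gamma_{1}$ has the diagonalization used above, which follows from the compactness of $\gamma_{0}$ in $\mathfrak{S}^{\frac{2p}{p+1}}$; everything else is a clean application of the triangle inequality.
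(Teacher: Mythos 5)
Your proposal is correct and follows essentially the same route as the paper: the paper deduces Theorem \ref{In} directly from the Duhamel decomposition $\gamma(t)=e^{itL}\gamma_0 e^{-itL}+\int_0^t e^{i(t-s)L}R(s)e^{i(s-t)L}\,ds$, applying the homogeneous orthonormal bound \eqref{7} to the first term and Theorem \ref{Innn} to the second, exactly as you do. You have simply spelled out the linearity of $\gamma\mapsto\rho_\gamma$, the triangle inequality, and the diagonalization step that the paper treats as immediate.
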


	\section{Examples}\label{sec5}
	In this section, we will apply the obtained results to study Strichartz estimates and their orthonormal version  
	for Laplacian, Hermite, twisted Laplacians, Laguerre operators, and $(k, a)$-generalized operators. It is worth noticing that the list of applications is not exhaustive, since we just intend to show the generality of our theory. Apart from these applications, one can find more applications in other setting such as Schrödinger operators with smooth potentials. 
	
	\subsection{Laplacian operator:}
	Let $L=-\Delta$ be the classical Laplacian operator on $X=\mathbb{R}^d$ with $d\geq 1$.  Let $K_{it}(x, y)$ denote the kernel of the  Schr\"odinger semigroup $e^{-it \Delta}$. Then it is well-known that  (see for example \cite{RS}) 
	$$ \sup_{x,y\in \mathbb{R}^d}|K_{it}(x,y)|\lesssim |t|^{-\frac{d}{2}}$$
	with $T_0=\R.$  
	Therefore  $K_{it}(x, y)$  satisfies Assumption $(\mathcal{B})$  with $n=d$. In this case, Theorem \ref{OSI} reduces to Strichartz estimates associated with the classical Laplacian operator for a system of orthonormal functions as follows:
	\begin{theorem}\label{D}
		Let  $p,q, d\geq 1$ satisfy \begin{equation*}
			\frac{2}{q}+\frac{d}{p}=d. 
		\end{equation*}
		(i) If  $  1\leq p<\frac{d+1}{d-1},$  then 
		\begin{equation}\label{Equ111111}
			\left\|\sum_{j\in J}n_j|e^{-it\Delta}f_j|^2\right\|_{L^q(\R,L^p(\R^n))} \leq C\left\|\{n_j\}_{j\in J}\right\|_{\ell^\frac{2p}{p+1}},
		\end{equation}
		holds for any (possibly infinite) orthonormal system $\{f_j\}_{j\in J}$ in $L^2(\R^d)$ and any sequence $\{n_j\}_{j\in J}$ in $\mathbb{C}$,
		where $C>0$ is independent of $\{f_j\}_{j\in J}$ and $\{n_j\}_{j\in J}$.\\
		
		(ii) If 
		\begin{align*}
			\begin{cases}  d=1, \\
				p=\infty,
			\end{cases}or ~~ \begin{cases}  d=2  \\
				3\leq p<\infty,
			\end{cases} or ~~ \begin{cases}  d\geq3\\
				\frac{d+1}{d-1}\leq p\leq \frac{d}{d-2},
			\end{cases}
		\end{align*}   then for  any $1\leq\lambda<q$
		\begin{equation}\label{eq22222}
			\left\|\sum_{j\in J}n_j|e^{-it\Delta}f_j|^2\right\|_{L^q(\R,L^p(\R^d))} \leq C\left\|\{n_j\}_{j\in J}\right\|_{\ell^\lambda},
		\end{equation}
		holds for any (possibly infinite) orthonormal system $\{f_j\}_{j\in J}$ in $L^2(\R^d)$ and any sequence $\{n_j\}_{j\in J}$ in $\mathbb{C}$,
		where $C>0$ is independent of $\{f_j\}_{j\in J}$ and $\{n_j\}_{j\in J}$.
	\end{theorem}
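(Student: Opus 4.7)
This theorem is essentially a direct specialization of the abstract Theorem \ref{OSI} to $L = -\Delta$ on $X = \mathbb{R}^d$ equipped with Lebesgue measure. The plan is therefore to verify that the classical Laplacian fits into the abstract framework with dimensional parameter $n = d$ and time window $\mathcal{I} = \mathbb{R}$ (so $T_0 = +\infty$), and then to transcribe the dichotomy of Theorem \ref{OSI} into the dimensional dichotomy of Theorem \ref{D}.

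First I would record Assumption $(\mathcal{A})$. Starting from the Fourier multiplier representation $e^{-it\Delta} = \mathcal{F}^{-1} e^{it|\xi|^2}\mathcal{F}$ and completing the square in the Gaussian oscillatory integral, one obtains the standard explicit Schrödinger kernel
\begin{equation*}
K_{it}(x,y) = (4\pi i t)^{-d/2} e^{-i|x-y|^2/(4t)}, \qquad t\neq 0,
\end{equation*}
with the appropriate branch of the complex square root. This representation (valid first on Schwartz functions and then extended via density) shows that $e^{-it\Delta}$ is an integral operator in the sense of Assumption $(\mathcal{A})$.

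Assumption $(\mathcal{B})$ then drops out of the explicit formula: for every $t\neq 0$,
\begin{equation*}
\sup_{x,y\in\mathbb{R}^d}|K_{it}(x,y)| = (4\pi|t|)^{-d/2} \lesssim |t|^{-d/2},
\end{equation*}
so the uniform dispersive estimate holds with parameter $n = d$ and any $T_0\in(0,\infty]$. Choosing $T_0 = +\infty$ makes $\mathcal{I} = \mathbb{R}$, which is the relevant time interval in the statement.

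With both assumptions verified, the remaining step is to invoke Theorem \ref{OSI} with $n = d$ and read off the exponents. The admissibility condition $\frac{2}{q} + \frac{n}{p} = n$ becomes $\frac{2}{q} + \frac{d}{p} = d$. Part (i) of Theorem \ref{OSI} in the range $1 \leq p < (d+1)/(d-1)$ yields precisely \eqref{Equ111111}. Part (ii) splits into the three dimensional cases $1 \leq n < 2$, $n = 2$, and $n > 2$, which correspond respectively to $d = 1$ (where $(n+1)/(n-1)$ is interpreted formally as $+\infty$, so only $p = \infty$ remains), $d = 2$ with $3 \leq p < \infty$, and $d \geq 3$ with $(d+1)/(d-1) \leq p \leq d/(d-2)$; these match exactly the three cases listed in part (ii) of Theorem \ref{D} and produce \eqref{eq22222}. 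No genuine new obstacle arises: all of the dispersive work is encapsulated in the explicit kernel bound, and the rest is direct bookkeeping of ranges; the hardest point is merely confirming that the endpoint conventions at $d = 1$ in Theorem \ref{OSI}(ii) correctly reduce to the $p = \infty$ endpoint here.
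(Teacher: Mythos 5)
Your proposal is correct and matches the paper's argument almost verbatim: the paper likewise verifies the dispersive bound $\sup_{x,y}|K_{it}(x,y)|\lesssim|t|^{-d/2}$ for the explicit Schr\"odinger kernel (citing Strichartz's paper for it), takes $T_0=\infty$ so $\mathcal{I}=\mathbb{R}$, and then applies Theorem \ref{OSI} with $n=d$, reading off the three cases of part (ii) exactly as you do. The only difference is that you spell out the Gaussian oscillatory-integral derivation of the kernel and the $d=1$ endpoint bookkeeping, which the paper leaves implicit.
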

	Note that the above orthonormal  Strichartz inequality for the Laplacian  can be found in   \cite{frank,BHLNS, FS}. Furthermore,  in \cite{BHLNS} the authors also proved that  \eqref{Equ111111} does not hold for other exponent $\lambda>\frac{2p}{p+1}$. Moreover, when $p=\frac{d+1}{d-1}$, the estimate \eqref{Equ111111} holds for all exponent $\lambda<\frac{2p}{p+1}$ and fails for $\lambda=\frac{2p}{p+1}$.

	\begin{remark}
		Let $L=-\Delta$ be the classical Laplacian operator on $X=\mathbb{R}^d$ with $d\geq 2$.  Let $K_{it}(x, y)$ denote the kernel of the truncated wave semigroup $e^{it \sqrt{-\Delta}}\psi(\sqrt{-\Delta})$, where $\psi\in C_c^\infty([\frac{1}{2},2])$. Then it is well-known that  (see for example \cite{GV, GPW2008})
		$$ \sup_{x,y\in \mathbb{R}^d}|K_{it}(x,y)|\lesssim_\psi |t|^{-\frac{d-1}{2}}$$
		with $T_0=\R.$  
		Therefore  $K_{it}(x, y)$  satisfies Assumption $(\mathcal{B'})$  with $n=d-1$. In this case, Theorem \ref{truncated} reduces to the frequency localized Strichartz estimates associated with the classical wave operator for a system of orthonormal functions as follows:
		\begin{theorem}\label{wave}
			Suppose $p,q\geq 1$ and $d\geq2$ satisfy
			\begin{equation*}
				\frac{2}{q}+\frac{d-1}{p}=d-1.
			\end{equation*}
			
			(i) If $1\leq p<\frac{d}{d-2}$, then 
			\begin{equation*}
				\left\|\sum_{j\in J}n_j|e^{it \sqrt{-\Delta}}\psi(\sqrt{-\Delta})f_j|^2\right\|_{L^q(\mathbb{R},L^p(\mathbb{R}^d))} \lesssim_\psi \left\|\{n_j\}_{j\in J}\right\|_{\ell^\frac{2p}{p+1}},
			\end{equation*}
			holds for any (possibly infinite) orthonormal system $\{f_j\}_{j\in J}$ in $L^2(\mathbb{R}^d)$ and any sequence $\{n_j\}_{j\in J}$ in $\mathbb{C}$.\\
			
			(ii) If  \begin{align*}
				\begin{cases}  d=2, \\
					p=\infty,
				\end{cases}or ~~ \begin{cases}  d=3 \\
					3\leq p<\infty,
				\end{cases} or ~~ \begin{cases}  d\geq4\\
					\frac{d}{d-2}\leq p\leq \frac{d-1}{d-3},
				\end{cases}
			\end{align*} 
			then for $1\leq\lambda<q$
			\begin{equation*}
				\left\|\sum_{j\in J}n_j|e^{it \sqrt{-\Delta}}\psi(\sqrt{-\Delta}) f_j|^2\right\|_{L^q(\mathbb{R},L^p(\mathbb{R}^d))}\lesssim_\psi \left\|\{n_j\}_{j\in J}\right\|_{\ell^\lambda},
			\end{equation*}
			holds for any (possibly infinite) orthonormal system $\{f_j\}_{j\in J}$ in $L^2(\mathbb{R}^d)$ and any sequence $\{n_j\}_{j\in J}$ in $\mathbb{C}$.
		\end{theorem}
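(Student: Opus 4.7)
The plan is to recognize Theorem \ref{wave} as a direct specialization of Theorem \ref{truncated} to the particular choice $L=-\Delta$ on $X=\mathbb{R}^d$ and $\phi(r)=r$. Under these choices the abstract frequency-localized dispersive semigroup $U(t)=e^{it\phi(L)}\psi(\sqrt{L})$ becomes exactly the truncated wave propagator $e^{it\sqrt{-\Delta}}\psi(\sqrt{-\Delta})$, and the admissibility relation $\frac{2}{q}+\frac{n}{p}=n$ with the identification $n=d-1$ reproduces $\frac{2}{q}+\frac{d-1}{p}=d-1$ as stated. So the task reduces to checking that Assumptions $(\mathcal{A}')$ and $(\mathcal{B}')$ hold in this setting with the correct value of $n$, after which the two parts follow by a verbatim invocation of Theorem \ref{truncated}.

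First I would verify $(\mathcal{A}')$. Since $\psi\in C_c^\infty([\tfrac12,2])$, the Fourier multiplier with symbol $e^{it|\xi|}\psi(|\xi|)$ has rapidly decaying inverse Fourier transform in the spatial variable, so $e^{it\sqrt{-\Delta}}\psi(\sqrt{-\Delta})$ is an integral operator on $L^2(\mathbb{R}^d)$ with kernel
\begin{equation*}
K_{it}(x,y)=(2\pi)^{-d}\int_{\mathbb{R}^d}e^{i(x-y)\cdot\xi+it|\xi|}\psi(|\xi|)\,d\xi,
\end{equation*}
which depends only on $x-y$ and is bounded for each $t$. Next I would verify $(\mathcal{B}')$ with $n=d-1$ and $T_0=+\infty$, that is, the dispersive bound
\begin{equation*}
\sup_{x,y\in\mathbb{R}^d}|K_{it}(x,y)|\lesssim_\psi |t|^{-(d-1)/2},\qquad t\in\mathbb{R}.
\end{equation*}
This is a classical estimate for the frequency-localized wave propagator: after polar coordinates, the frequency cutoff on $|\xi|\sim 1$ reduces the matter to an oscillatory integral on the sphere with phase $\omega\mapsto (x-y)\cdot\omega+t$, which has $d-1$ nondegenerate critical directions, so stationary phase yields the $|t|^{-(d-1)/2}$ decay uniformly in $x,y$. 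This bound is recorded in the references \cite{GV, GPW2008} cited just before the statement, and I would simply quote it.

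With both assumptions in hand and $n=d-1$, Theorem \ref{truncated} applies directly and yields the two parts after the purely arithmetic translation of the ranges of $p$: part (i) gives $1\leq p<\frac{n+1}{n-1}=\frac{d}{d-2}$, and the three regimes $1\leq n<2$, $n=2$, $n>2$ of part (ii) correspond respectively to $d=2$, $d=3$, $d\geq 4$, with the endpoints transforming as $\frac{n+1}{n-1}=\frac{d}{d-2}$ and $\frac{n}{n-2}=\frac{d-1}{d-3}$. I do not expect a genuine obstacle: the whole analytical content was already absorbed into Theorem \ref{truncated}, and the only nontrivial input specific to this example is the dispersive kernel bound, which is standard. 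The proof is therefore essentially bookkeeping once the kernel estimate is cited.
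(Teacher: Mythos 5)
Your proposal matches the paper's argument exactly: the paper obtains Theorem~\ref{wave} by specializing Theorem~\ref{truncated} to $L=-\Delta$, $\phi(r)=r$, citing the same references \cite{GV, GPW2008} for the kernel bound $\sup_{x,y}|K_{it}(x,y)|\lesssim_\psi |t|^{-(d-1)/2}$ (so $n=d-1$, $T_0=\infty$) and then translating the exponent ranges. Your bookkeeping for the ranges in parts (i) and (ii) under the substitution $n=d-1$ is also correct, so there is nothing to add.
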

	\end{remark}
	\subsection{Hermite operator:}
	Let $L=H=-\Delta+|x|^2$ be the Hermite operator on $X=\mathbb{R}^d$ with $d\geq 1$.  Let $K_{it}(x, y)$ denote the kernel of the Hermite semigroup $e^{-it H}$. Then it is well-known that  (see for example \cite{NR2005}) 
	$$ \sup_{x,y\in \mathbb{R}^d}|K_{it}(x,y)|\lesssim |t|^{-\frac{d}{2}},\,|t|<\pi.$$  
	Therefore  $K_{it}(x, y)$  satisfies Assumption $(\mathcal{B})$  with $n=d$. In this case, Theorem \ref{OSI} reduces to Strichartz estimates associated with the Hermite operator for a system of orthonormal functions as follows:
	\begin{theorem}\label{H}		
		Let  $p,q$ satisfy \begin{equation*}
			\frac{2}{q}+\frac{d}{p}=d. 
		\end{equation*}
		(i) If  $  1\leq p<\frac{d+1}{d-1},$  then 
		\begin{equation*}
			\left\|\sum_{j\in J}n_j|e^{itH}f_j|^2\right\|_{L^q((-\pi, \pi),L^p(\R^n))} \leq C\left\|\{n_j\}_{j\in J}\right\|_{\ell^\frac{2p}{p+1}},
		\end{equation*}
		holds for any (possibly infinite) orthonormal system $\{f_j\}_{j\in J}$ in $L^2(\R^d)$ and any sequence $\{n_j\}_{j\in J}$ in $\mathbb{C}$,
		where $C>0$ is independent of $\{f_j\}_{j\in J}$ and $\{n_j\}_{j\in J}$.\\
		
		(ii) If 
		\begin{align*}
			\begin{cases}  d=1, \\
				p=\infty,
			\end{cases}or ~~ \begin{cases}  d=2  \\
				3\leq p<\infty,
			\end{cases} or ~~ \begin{cases}  d\geq3\\
				\frac{d+1}{d-1}\leq p\leq \frac{d}{d-2},
			\end{cases}
		\end{align*}   then for  any $1\leq\lambda<q$
		\begin{equation*}
			\left\|\sum_{j\in J}n_j|e^{itH}f_j|^2\right\|_{L^q((-\pi, \pi),L^p(\R^d))} \leq C\left\|\{n_j\}_{j\in J}\right\|_{\ell^\lambda},
		\end{equation*}
		holds for any (possibly infinite) orthonormal system $\{f_j\}_{j\in J}$ in $L^2(\R^d)$ and any sequence $\{n_j\}_{j\in J}$ in $\mathbb{C}$,
		where $C>0$ is independent of $\{f_j\}_{j\in J}$ and $\{n_j\}_{j\in J}$.
	\end{theorem}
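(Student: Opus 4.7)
The proof is essentially an application of the general abstract Theorem \ref{OSI} once one verifies that the Hermite semigroup fits into the framework; so my plan has three short steps.

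First, I would recall that the Hermite operator $H = -\Delta + |x|^2$ is non-negative and essentially self-adjoint on $C_c^\infty(\mathbb{R}^d)$, with an orthonormal basis of eigenfunctions given by the Hermite functions. Spectrally, $e^{-itH}$ is unitary on $L^2(\mathbb{R}^d)$. To verify Assumption $(\mathcal{A})$ on $X = \mathbb{R}^d$ with Lebesgue measure, I would invoke Mehler's formula, which gives the explicit kernel
\begin{equation*}
K_{-it}(x,y) = (2\pi i \sin(2t))^{-d/2} \exp\!\left( \tfrac{i}{2\sin(2t)}\bigl((|x|^2+|y|^2)\cos(2t) - 2 x\cdot y\bigr)\right),
\end{equation*}
valid for $0 < |t| < \pi/2$ (and extended to $|t|<\pi$ with appropriate branch conventions), see for instance \cite{NR2005}.

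Second, taking absolute values in Mehler's formula gives
\begin{equation*}
\sup_{x,y\in\mathbb{R}^d} |K_{-it}(x,y)| \lesssim |\sin(2t)|^{-d/2} \lesssim |t|^{-d/2}, \qquad |t|<\pi,
\end{equation*}
which is exactly Assumption $(\mathcal{B})$ with $n=d$ and $T_0 = \pi$. (One can either handle the full range $|t|<\pi$ at once by combining Mehler's formula with the relation $e^{-i(t+\pi/2)H} = e^{-i\pi H/2}e^{-itH}$ and the fact that $e^{-i\pi H/2}$ is a metaplectic operator preserving $L^\infty$-bounds up to symmetry, or simply restrict to the open interval where Mehler's formula applies directly and note the singularity of $\sin(2t)$ at $t=\pi/2$ is the only scale to control.)

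Third, with Assumptions $(\mathcal{A})$ and $(\mathcal{B})$ verified and $\mathcal{I} = (-\pi,\pi)$, I would apply Theorem \ref{OSI} directly. Parts (i) and (ii) of the Hermite result follow word for word from the two parts of Theorem \ref{OSI} specialized to $n = d$. There is essentially no obstacle beyond checking the kernel bound, so the only mildly delicate point is the interval of validity of the dispersive estimate near $t = \pi/2$; this is a matter of bookkeeping with Mehler's formula rather than a substantial analytic step. Accordingly, the proof is a one-line application of Theorem \ref{OSI} after Mehler's formula is recorded.
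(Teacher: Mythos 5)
Your approach is identical in structure to the paper's: both verify Assumptions $(\mathcal{A})$ and $(\mathcal{B})$ for the Hermite propagator and then invoke the abstract Theorem \ref{OSI} with $n=d$. The paper handles the dispersive estimate by citing \cite{NR2005}; you supply Mehler's formula instead, which is a reasonable way to make the citation explicit.

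There is, however, a gap in the verification of Assumption $(\mathcal{B})$ on the full interval $\mathcal{I}=(-\pi,\pi)$, which you flag but do not correctly close. Taking absolute values in Mehler's formula gives $\sup_{x,y}|K_{-it}(x,y)| = (2\pi|\sin 2t|)^{-d/2}$, and the comparison $|\sin 2t|^{-d/2}\lesssim |t|^{-d/2}$ holds only on $|t|\le T_0$ for $T_0<\pi/2$. Near $t=\pm\pi/2$ the kernel blows up like $|t\mp\pi/2|^{-d/2}$ while $|t|^{-d/2}$ stays bounded, so the uniform bound Assumption $(\mathcal{B})$ demands with $T_0=\pi$ is simply false. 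Your proposed repair via $e^{-i(t+\pi/2)H}=e^{-i\pi H/2}e^{-itH}$, together with $e^{-i\pi H/2}f=e^{-i\pi d/2}f(-\cdot)$, only shows the kernel singularity near $\pi/2$ has profile $|t-\pi/2|^{-d/2}$; it does not upgrade it to $|t|^{-d/2}$. (The parity operator is an $L^\infty$-isometry as an operator, but its integral kernel is a Dirac mass, so the composition does not yield a pointwise kernel bound of the assumed form.) The correct route to the stated interval $(-\pi,\pi)$ is to apply Theorem \ref{OSI} with $T_0=\pi/4$, where $|\sin 2t|\gtrsim |t|$, and then extend by covering $(-\pi,\pi)$ with finitely many translates $(\theta-\pi/4,\theta+\pi/4)$, observing that $e^{-i\theta H}$ is unitary on $L^2(\R^d)$ and that unitary conjugation preserves orthonormality of $\{f_j\}_{j\in J}$ and the $\ell^\lambda$-norm of $\{n_j\}_{j\in J}$. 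This extra step is elementary but should be recorded; the paper itself glosses over it by leaning entirely on the citation.
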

	Note that part (i) of the above orthonormal  Strichartz inequality for the Hermite operator can be found in Mondal-Swain \cite{MS} and Bez-Hong-Lee-Nakamura-Sawano \cite{BHLNS} with the Schatten exponent $\frac{2p}{p+1}$ is optimal, however, result obtained in the part (ii) is not considered yet in the literature. 
	
	\subsection{Twisted Laplacians:}
	Let $L=\mathcal{L}=\frac{1}{2} \sum_{j=1}^{d}\left(Z_{j} \bar{Z}_{j}+\bar{Z}_{j} Z_{j}\right),
	$ be the twisted Laplacian  operator on $X=\mathbb{C}^d$, where $$Z_{j}=\frac{\partial}{\partial \zeta_{j}}+\frac{1}{2} \bar{\zeta}_{j}, ~~\bar{Z}_{j}=-\frac{\partial}{\partial \bar{\zeta}_{j}}+\frac{1}{2} \zeta_{j},\quad  j=1,2, \ldots d.$$  
	
	Then   $e^{-it \mathcal{L}}$  is a twisted convolution operator with 
	kernel  $K_{it}(\zeta), \zeta\in \mathbb{C}^d$, that satisfies   (see for example \cite{R2008})  
	$$ \sup_{\zeta \in  \mathbb{C}^d}|K_{it}(\zeta)|\lesssim |t|^{-d},\,|t|<\pi.$$  
	This shows that  $K_{it}(\zeta)$  satisfies Assumption $(\mathcal{B})$  with $n=2d$. In this case, Theorem \ref{OSI} reduces to Strichartz estimates for a system of orthonormal functions as follows:
	\begin{theorem}
		Let  $p,q$ satisfy \begin{equation*}
			\frac{1}{q}+\frac{d}{p}=d. 
		\end{equation*}
		(i) If  $ 1\leq p < \frac{2d+1}{2d-1},$  then 
		\begin{equation*}
			\left\|\sum_{j\in J} n_{j}\left|e^{i t \mathcal{L} } f_{j}\right|^{2}\right\|_{L^q((-\pi, \pi),L^p(\mathbb{C}^{d}))} \leq C\left\|\{n_j\}_{j\in J}\right\|_{\ell^\frac{2p}{p+1}},
		\end{equation*}
		holds for any (possibly infinite) orthonormal system $\{f_j\}_{j\in J}$ in $L^2(\C^d)$ and any sequence $\{n_j\}_{j\in J}$ in $\mathbb{C}$,
		where $C>0$ is independent of $\{f_j\}_{j\in J}$ and $\{n_j\}_{j\in J}$.\\
		
		(ii) If 
		\begin{align*}
			\begin{cases}  d=1  \\
				3\leq p<\infty,
			\end{cases} or ~~ \begin{cases}  d\geq2\\
				\frac{2d+1}{2d-1}\leq p\leq \frac{d}{d-1},
			\end{cases}
		\end{align*}   then for  any $1\leq\lambda<q$
		\begin{equation*}
			\left\|\sum_{j\in J} n_{j}\left|e^{i t \mathcal{L} } f_{j}\right|^{2}\right\|_{L^q((-\pi, \pi),L^p(\mathbb{C}^{d}))}\leq C\left\|\{n_j\}_{j\in J}\right\|_{\ell^\lambda},
		\end{equation*}
		holds for any (possibly infinite) orthonormal system $\{f_j\}_{j\in J}$ in $L^2(\C^d)$ and any sequence $\{n_j\}_{j\in J}$ in $\mathbb{C}$,
		where $C>0$ is independent of $\{f_j\}_{j\in J}$ and $\{n_j\}_{j\in J}$.	
	\end{theorem}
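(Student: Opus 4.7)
The proof is essentially a direct application of the general Theorem \ref{OSI} once the required kernel estimates are verified. The plan is to check Assumptions $(\mathcal{A})$ and $(\mathcal{B})$ for the Schr\"odinger propagator $e^{it\mathcal{L}}$, match the exponent conditions, and then invoke the master theorem.

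First, I would recall that the twisted Laplacian $\mathcal{L}$ is a non-negative self-adjoint operator on $L^2(\mathbb{C}^d)$ (this is classical, since $\mathcal{L}$ has a well-known discrete spectrum given by the special Hermite functions, and hence $e^{it\mathcal{L}}$ is a unitary operator on $L^2(\mathbb{C}^d)$). Next, I would use the fact that $e^{it\mathcal{L}}$ acts by twisted convolution with the kernel $K_{it}(\zeta)$ given explicitly (by the formula from \cite{R2008}); consequently, the integral kernel in the sense of Assumption $(\mathcal{A})$ takes the form $K_{it}(\zeta_1,\zeta_2) = K_{it}(\zeta_1 - \zeta_2) e^{i\Phi(\zeta_1,\zeta_2)}$ for some real phase function $\Phi$, so that Assumption $(\mathcal{A})$ holds and the pointwise bound $\sup_{\zeta_1,\zeta_2 \in \mathbb{C}^d} |K_{it}(\zeta_1,\zeta_2)| = \sup_{\zeta \in \mathbb{C}^d}|K_{it}(\zeta)|$ reduces to the twisted convolution kernel estimate stated in the excerpt:
\begin{equation*}
\sup_{\zeta \in \mathbb{C}^d}|K_{it}(\zeta)| \lesssim |t|^{-d}, \quad |t|<\pi.
\end{equation*}
Since $\mathbb{C}^d$ is identified with $\mathbb{R}^{2d}$, this is exactly Assumption $(\mathcal{B})$ with $n = 2d$ and $T_0 = \pi$.

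Having verified both assumptions, I would then translate the admissibility condition from Theorem \ref{OSI}, namely $\frac{2}{q} + \frac{n}{p} = n$, which with $n = 2d$ becomes $\frac{1}{q} + \frac{d}{p} = d$ after dividing by $2$. Part (i) follows from part (i) of Theorem \ref{OSI} since the range $1 \leq p < \frac{n+1}{n-1} = \frac{2d+1}{2d-1}$ translates directly. For part (ii), I would note that $n = 2d \geq 2$, so only the cases $n = 2$ (equivalently $d = 1$, giving $3 \leq p < \infty$) and $n > 2$ (equivalently $d \geq 2$, giving $\frac{2d+1}{2d-1} \leq p \leq \frac{n}{n-2} = \frac{d}{d-1}$) can occur, and these are exactly the cases appearing in the statement.

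No serious obstacle arises here: the proof is a bookkeeping exercise on exponents, and the only thing to be slightly careful about is the reduction from the integral kernel $K_{it}(\zeta_1,\zeta_2)$ required by Assumption $(\mathcal{A})$ to the twisted convolution kernel $K_{it}(\zeta)$ whose estimate is available in \cite{R2008}; the key point is that the extra phase from the twisted convolution has modulus one and therefore does not affect the supremum bound. Once this is noted, Theorem \ref{OSI} immediately yields both parts of the result with the Schatten-type norms $\ell^{\frac{2p}{p+1}}$ and $\ell^\lambda$ ($1 \leq \lambda < q$) as stated.
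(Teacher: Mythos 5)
Your proposal is correct and follows essentially the same route the paper takes: verify Assumptions $(\mathcal{A})$ and $(\mathcal{B})$ with $n=2d$ and $T_0=\pi$ using the twisted-convolution kernel bound from \cite{R2008}, then specialize the exponent conditions in Theorem \ref{OSI}. Your remark that the unimodular phase relating the twisted-convolution kernel $K_{it}(\zeta)$ to the actual integral kernel $K_{it}(\zeta_1,\zeta_2)$ does not affect the supremum is a small clarification the paper leaves implicit, but otherwise the arguments coincide.
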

	Note that, part (i) of the above orthonormal  Strichartz inequality for the twisted Laplacian operator has been proved by  Ghosh-Mondal-Swain in \cite{GMS}. They   also proved that the Schatten exponent $\frac{2p}{p+1}$ is optimal.

	\subsection{Lagurre operator:}
	For $\alpha=(\alpha_1, \cdots \alpha_d)\in (-\frac{1}{2}, \infty)^d$, let $$L=L_{\alpha}=-\Delta+\sum_{j=1}^d\left(\frac{2\alpha_j+1}{x_j}\frac{\partial}{\partial x_j}\right) $$ be the Lagurre operator on $X=\mathbb{R}^d$ with the measure $d\mu(x)=dw_\alpha(x)=x^{2\alpha+1}dx$.   Also let  $K_{it}(x, y)$ denote the kernel of the Lagurre operator $e^{-it L_{\alpha}}$. Then it is well-known that  from  \cite{GM} 
	$$ \sup_{x,y\in \mathbb{R}^d}|K_{it}(x,y)|\lesssim |t|^{-\left(d+\sum_{j=1}^d \alpha_j\right)},\,|t|\leq \frac{\pi}{2}.$$  
	Therefore  $K_{it}(x, y)$  satisfies Assumption $(\mathcal{B})$  with $n=2\left(d+\sum_{j=1}^d \alpha_j\right)$. In this case, Theorem \ref{OSI} reduces to  Strichartz estimates associated with the  Laguerre
	operator for the system of orthonormal functions as follows:
	\begin{theorem}
		Let $p, q, d \geq 1$ satisfy
		$$
		\frac{1}{q}+\frac{d+\sum_{j=1}^d \alpha_j}{p}=d+\sum_{j=1}^d \alpha_j.
		$$
		
		(i) If  $	1 \leq p<\frac{2\left(d+\sum_{j=1}^d \alpha_j\right)+1}{2\left(n+\sum_{j=1}^d \alpha_j\right)-1} ,$  then 
		\begin{equation*}
			\left\|\sum_{j \in J} n_j\left|e^{i t L_\alpha} f_j\right|^2\right\|_{L^q\left(\left[-\frac{\pi}{2}, \frac{\pi}{2}\right], L^p\left(\mathbb{R}_{+}^d, dw_\alpha\right)\right)}  \leq C\left\|\{n_j\}_{j\in J}\right\|_{\ell^\frac{2p}{p+1}},
		\end{equation*}
		holds for any (possibly infinite) orthonormal system $\{f_j\}_{j\in J}$ in $L^2\left(\mathbb{R}_{+}^d, dw_\alpha\right)$ and any sequence $\{n_j\}_{j\in J}$ in $\mathbb{C}$,
		where $C>0$ is independent of $\{f_j\}_{j\in J}$ and $\{n_j\}_{j\in J}$.\\
		
		(ii) If 
		\begin{align*}
			&	\begin{cases}  1\leq 2\left(d+\sum_{j=1}^d \alpha_j\right)<2, \\
				\frac{2\left(d+\sum_{j=1}^d \alpha_j\right)+1}{2\left(d+\sum_{j=1}^d \alpha_j\right)-1}\leq p\leq \infty,
			\end{cases}or ~~ \begin{cases}   d+\sum_{j=1}^d \alpha_j=1  \\
				3\leq p<\infty,
			\end{cases} \\& or ~~ \begin{cases}   d+\sum_{j=1}^d \alpha_j >1\\
				\frac{2\left(d+\sum_{j=1}^d \alpha_j\right)+1}{2\left(d+\sum_{j=1}^d \alpha_j\right)-1}\leq p\leq \frac{d+\sum_{j=1}^d \alpha_j}{\left(d+\sum_{j=1}^d \alpha_j\right)-1},
			\end{cases}
		\end{align*}   then for  any $1\leq\lambda<q$
		\begin{equation*}
			\left\|\sum_{j \in J} n_j\left|e^{i t L_\alpha} f_j\right|^2\right\|_{L^q\left(\left[-\frac{\pi}{2}, \frac{\pi}{2}\right], L^p\left(\mathbb{R}_{+}^d, dw_\alpha\right)\right)}   \leq C\left\|\{n_j\}_{j\in J}\right\|_{\ell^\lambda},
		\end{equation*}
		holds for any (possibly infinite) orthonormal system $\{f_j\}_{j\in J}$ in $L^2\left(\mathbb{R}_{+}^d, dw_\alpha\right)$ and any sequence $\{n_j\}_{j\in J}$ in $\mathbb{C}$,
		where $C>0$ is independent of $\{f_j\}_{j\in J}$ and $\{n_j\}_{j\in J}$.
	\end{theorem}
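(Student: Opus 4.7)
The plan is to deduce this Laguerre-operator result as an immediate instance of the abstract Theorem \ref{OSI} applied to $L=L_\alpha$ on the measure space $(X,\mu)=(\mathbb{R}_+^d, dw_\alpha)$. First I would verify the two structural hypotheses. The Laguerre operator $L_\alpha$ is a non-negative, essentially self-adjoint operator on $L^2(\mathbb{R}_+^d, dw_\alpha)$ with a discrete spectrum given by its Laguerre-function expansion, so the Schr\"odinger propagator $e^{-itL_\alpha}$ is well-defined and unitary. Using the Mehler-type kernel formula for the Laguerre semigroup, one obtains the integral representation required by Assumption $(\mathcal{A})$, with an explicit kernel $K_{it}(x,y)$ expressible through a product of one-dimensional Mehler factors.

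Next I would invoke the pointwise dispersive bound for this kernel already recorded from \cite{GM}, namely
\begin{equation*}
\sup_{x,y\in\mathbb{R}_+^d}|K_{it}(x,y)|\lesssim |t|^{-(d+\sum_{j=1}^d \alpha_j)}, \qquad |t|\leq \tfrac{\pi}{2}.
\end{equation*}
Matching this with Assumption $(\mathcal{B})$ forces the choice $n=2\bigl(d+\sum_{j=1}^d\alpha_j\bigr)$ and $T_0=\pi/2$, so $\mathcal{I}=(-\pi/2,\pi/2)$. With this identification, the scaling relation $\tfrac{2}{q}+\tfrac{n}{p}=n$ of Theorem \ref{OSI} becomes exactly $\tfrac{1}{q}+\tfrac{d+\sum_j\alpha_j}{p}=d+\sum_j\alpha_j$, which is the admissibility condition in the statement.

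Finally, I would apply Theorem \ref{OSI} verbatim with this $n$. Part (i) of the target follows from part (i) of Theorem \ref{OSI}, since the range $1\leq p<\frac{n+1}{n-1}$ translates into $1\leq p<\frac{2(d+\sum_j\alpha_j)+1}{2(d+\sum_j\alpha_j)-1}$, and the Schatten exponent $\tfrac{2p}{p+1}$ is preserved. Part (ii) follows from part (ii) of Theorem \ref{OSI} by splitting into the three regimes $1\leq n<2$, $n=2$, and $n>2$, which in terms of $d+\sum_j\alpha_j$ give precisely the three displayed subcases, and the threshold $p\leq \frac{n}{n-2}$ becomes $p\leq \frac{d+\sum_j\alpha_j}{(d+\sum_j\alpha_j)-1}$.

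The only step requiring any substantive verification beyond citation is ensuring that the dispersive bound holds uniformly in $(x,y)$ on the full product space $\mathbb{R}_+^d\times\mathbb{R}_+^d$ for $|t|$ in a symmetric neighborhood of $0$; this is where the expected main obstacle lies for a fully self-contained presentation, but it is already supplied by \cite{GM}. Once that is in hand, no further interpolation or duality arguments are needed, as all the heavy lifting (Stein complex interpolation in Schatten classes, the duality principle of Frank-Sabin, and Hardy-Littlewood-Sobolev) has been absorbed into Theorem \ref{OSI}.
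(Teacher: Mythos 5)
Your proposal is correct and follows exactly the same approach the paper takes: cite the dispersive kernel bound $\sup_{x,y}|K_{it}(x,y)|\lesssim |t|^{-(d+\sum_j\alpha_j)}$ for $|t|\le\pi/2$ from \cite{GM}, identify $n=2(d+\sum_j\alpha_j)$ and $T_0=\pi/2$ in Assumption $(\mathcal{B})$, and then read off both parts of the statement as the corresponding cases of the abstract Theorem \ref{OSI}. The paper does precisely this and no more; the minor wrinkle you correctly resolved is that the displayed range in part (i) contains a typographical $n$ that must be read as $d$ for the substitution $n=2(d+\sum_j\alpha_j)$ to give $\frac{n+1}{n-1}$.
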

	We refer to Feng-Song   \cite{GM} for a detailed study on the Laguerre operator and the proof of part (i) of the above orthonormal  Strichartz inequality associated with the Laguerre operator.

	\subsection{$ (k, a)$-generalized Laguerre operator:}
	Let   $a>0$ to be a deformation parameter and  $k$ be a multiplicity function. Then  consider  the $ (k, a)$-generalized Laguerre operator
	$$
	L=\Delta_{k, a}:=\frac{1}{a}\left(\|x\|^{a}-\|x\|^{2-a} \Delta_{k}\right),
	$$
	where $\Delta_k$ is the Dunkl Laplacian operator  on $X=\mathbb{R}^d.$ For a notations and detailed study on the Dunkl theory, we refer to \cite{MM} and references therein.
	
	Let $K_{it}(x, y)$ denote the kernel of the $ (k, a)$-generalized Laguerre   semigroup $e^{-it \Delta_{k, a}}$. Then from \cite{}, it is well-known that 
	$$ \sup_{x,y\in \mathbb{R}^d}|K_{it}(x,y)|\lesssim |t|^{-\frac{2\gamma+d+a-2}{a}},\,|t|<\frac{\pi}{2},$$   
	This shows that   $K_{it}(x, y)$  satisfies Assumption $(\mathcal{B})$  with $n=\frac{2(2\gamma+d+a-2)}{a}$. In this case, Theorem \ref{OSI} reduces to Strichartz estimates associated with $ (k, a)$-generalized Laguerre  operator for a system of orthonormal functions as follows:
	\begin{theorem}  Suppose $a=1,2$ and $k$ is a non-negative multiplicity function such that
		$
		a+2 \gamma+d-2>0.
		$ If $p, q, d \geqslant$ 1 such that
		$$
		\frac{1}{q}+\frac{2 \gamma+d+a-2}{pa}=\frac{2 \gamma+d+a-2}{a}.
		$$
		
		(i) If  $  	1 \leqslant p<\frac{4\gamma+2d+3a-4}{  4\gamma+2d+a-4},$  then 
		\begin{equation*}
			\left\|\sum_{  j \in J} n_{j }\left|e^{i t \Delta_{k, a}}f_{j }\right|^{2}\right\|_{L^q ((-\frac{\pi}{2},\frac{\pi}{2}), L_{k,a}^p(\mathbb{R}^d))}  \leq C\left\|\{n_j\}_{j\in J}\right\|_{\ell^\frac{2p}{p+1}},
		\end{equation*}
		holds for any (possibly infinite) orthonormal system $\{f_j\}_{j\in J}$ in $L_{k,a}^{2}\left(\mathbb{R}^d\right)$ and any sequence $\{n_j\}_{j\in J}$ in $\mathbb{C}$,
		where $C>0$ is independent of $\{f_j\}_{j\in J}$ and $\{n_j\}_{j\in J}$.\\
		
		(ii) If 
		\begin{align*}
			\begin{cases}  a\leq {2(2\gamma+d+a-2)}<2a, \\
				\frac{4\gamma+2d+3a-4}{  4\gamma+2d+a-4}\leq p\leq \infty,
			\end{cases}or ~~ \begin{cases} 2\gamma+d=2  \\
				3\leq p<\infty,
			\end{cases} or ~~ \begin{cases}  2\gamma+d>2\\
				\frac{4\gamma+2d+3a-4}{  4\gamma+2d+a-4}\leq p\leq \frac{ {2\gamma+d+a-2}}{{2\gamma+d-2}},
			\end{cases}
		\end{align*}   then for  any $1\leq\lambda<q$
		\begin{equation*}
			\left\|\sum_{  j \in J} n_{j }\left|e^{i t \Delta_{k, a}}f_{j }\right|^{2}\right\|_{L^q ((-\frac{\pi}{2},\frac{\pi}{2}), L_{k,a}^p(\mathbb{R}^d))}  \leq C\left\|\{n_j\}_{j\in J}\right\|_{\ell^\lambda},
		\end{equation*}
		holds for any (possibly infinite) orthonormal system $\{f_j\}_{j\in J}$ in $L_{k,a}^{2}\left(\mathbb{R}^d\right)$ and any sequence $\{n_j\}_{j\in J}$ in $\mathbb{C}$,
		where $C>0$ is independent of $\{f_j\}_{j\in J}$ and $\{n_j\}_{j\in J}$.
	\end{theorem}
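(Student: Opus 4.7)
The plan is to deduce the theorem directly from the abstract orthonormal Strichartz inequality of Theorem \ref{OSI} applied to the non-negative self-adjoint operator $L = \Delta_{k,a}$ on $L^2_{k,a}(\mathbb{R}^d)$. The whole proof therefore reduces to verifying the two structural assumptions $(\mathcal{A})$ and $(\mathcal{B})$ with the correct dispersive exponent, and then matching the resulting ranges of $(p,q)$ with those written in the theorem.

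For Assumption $(\mathcal{A})$, I would recall from the Ben Sa\"id--Kobayashi--{\O}rsted construction (and the follow-up analysis used in \cite{MM}) that, for the admissible values $a = 1, 2$ and for non-negative multiplicity functions $k$ with $a + 2\gamma + d - 2 > 0$, the semigroup $e^{it\Delta_{k,a}}$ admits an explicit Mehler-type kernel $K_{it}(x,y)$ with respect to the reference measure $dw_{k,a}$; this realizes $e^{it\Delta_{k,a}}$ as an integral operator. For Assumption $(\mathcal{B})$, the relevant input is the uniform dispersive bound
\begin{equation*}
\sup_{x,y\in\mathbb{R}^d}|K_{it}(x,y)| \lesssim |t|^{-\frac{2\gamma+d+a-2}{a}}, \qquad |t| < \frac{\pi}{2},
\end{equation*}
which is already recorded in the literature (as cited just before the statement). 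This fixes the dispersive parameter as $n/2 = (2\gamma+d+a-2)/a$, i.e.\ $n = 2(2\gamma+d+a-2)/a$.

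Once both assumptions are in place, I would simply invoke Theorem \ref{OSI} with this value of $n$ on the time interval $\mathcal{I} = (-\pi/2, \pi/2)$. A direct substitution shows that the admissibility condition $\frac{2}{q} + \frac{n}{p} = n$ becomes
\begin{equation*}
\frac{1}{q} + \frac{2\gamma + d + a - 2}{ap} = \frac{2\gamma + d + a - 2}{a},
\end{equation*}
which is exactly the relation imposed in the statement. For part (i) one checks $(n+1)/(n-1) = (4\gamma + 2d + 3a - 4)/(4\gamma + 2d + a - 4)$, recovering the threshold written there, with the Schatten exponent $\frac{2p}{p+1}$. For part (ii), the three cases $1 \leq n < 2$, $n = 2$, and $n > 2$ translate (upon clearing $a$) to $a \leq 2(2\gamma+d+a-2) < 2a$, $2\gamma + d = 2$, and $2\gamma + d > 2$ respectively, while $n/(n-2) = (2\gamma+d+a-2)/(2\gamma+d-2)$; so the three displayed subcases and the $\ell^\lambda$ bounds for $1 \leq \lambda < q$ follow verbatim.

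The only genuine obstacle is the dispersive kernel bound used for $(\mathcal{B})$; all remaining work is bookkeeping. That bound is not elementary---it rests on a stationary-phase analysis of the Mehler-type formula for the $(k,a)$-generalized Laguerre semigroup together with sharp control on the Dunkl intertwining kernel---but since it is already available in \cite{MM} for the admissible parameters $a = 1, 2$, we are entitled to use it as a black box. With that single input secured, the orthonormal extension is automatic from the abstract framework.
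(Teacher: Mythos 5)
Your proposal matches the paper's own argument exactly: verify Assumptions $(\mathcal{A})$ and $(\mathcal{B})$ for $L=\Delta_{k,a}$ on $\mathcal{I}=(-\pi/2,\pi/2)$ using the known dispersive bound $|K_{it}(x,y)|\lesssim|t|^{-(2\gamma+d+a-2)/a}$, identify $n=2(2\gamma+d+a-2)/a$, and then read off both parts of the theorem directly from Theorem \ref{OSI}. Your arithmetic translating $\frac{2}{q}+\frac{n}{p}=n$, $\frac{n+1}{n-1}$, the case split on $n$, and $\frac{n}{n-2}$ into the stated quantities is correct and is the same bookkeeping the paper performs.
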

	Note that part (i) of the above orthonormal  Strichartz inequality for  $ (k, a)$-generalized Laguerre operator has been proved by   Mondal-Song in \cite{MM}. 	Using the classical Strichartz estimates for the free Schr\"odinger propagator $e^{-i t \Delta_{k, a}} $  for orthonormal systems of initial data and the kernel relation between the semigroups $e^{-i t \Delta_{k, a}}$ and $e^{i \frac{t}{a}\|x\|^{2-a} \Delta_{k}},$ they also prove  Strichartz estimates for orthonormal systems of initial data associated with the Dunkl Laplacian operator $ \Delta_k $ on $\mathbb{R}^n$.

	\section{Application: Hartree equation}\label{sec6}
	The following global well-posedness for the Hartree equation in Schatten spaces can be obtained as an application of the orthonormal Strichartz inequalities.

	Consider $M$ couple of nonlinear  Hartree equations which describes the dynamics of $M$ fermions interacting via a  potential $w$  as follows
	$$
	\left\{\begin{array}{rlrl}
		i \partial_t u_1 & =\left(L+w * \rho\right) u_1, & & \left.u_1\right|_{t=0}=f_1 \\
		& \vdots \\
		i \partial_t u_M & =\left(L+w * \rho\right) u_M, & & \left.u_M\right|_{t=0}=f_M,
	\end{array}\right.
	$$
	where $(x, t) \in  X \times \mathbb{R},\left\{f_j\right\}_{j=1}^M$ is an orthonormal system in $L^2\left(X\right)$ and $\rho$ is a density function defined by $\rho(x, t)=\sum_{j=1}^M\left|u_j(x, t)\right|^2$.  One of the natural questions is to investigate the behaviour of solution of the system as $M \rightarrow \infty$.  In this case, we naturally arrive at the following operator-valued equivalent formulation 
	$$
	\left\{\begin{array}{l}
		i \partial_t \gamma=\left[L+w * \rho_\gamma, \gamma\right], \\
		\left.\gamma\right|_{t=0}=\gamma_0.
	\end{array}\right.
	$$
	Here $\gamma_0$ and $\gamma=\gamma(t)$ are bounded and self-adjoint operators on $L^2\left(X\right)$.

	Now using the inhomogeneous Strichartz inequality, we have the following well-posedness result for the Hartree equation.
	\begin{theorem}
		Assume $L$ is a non-negative self-adjoint operator on $L^2(X)$ that satisfies Assumption $(\mathcal{A})$ and $(\mathcal{B})$. Let $w\in L^{p^\prime}(X)$ and $p,q\geq 1$ satisfy
		\begin{equation*}
			1\leq p<\frac{n+1}{n-1}\;\text{and}\; \frac{2}{q}+\frac{n}{p}=n
		\end{equation*}

		\begin{enumerate}
			\item  (Local well-posedness)    For any $\gamma_0  \in  {\mathfrak{S}^{\frac{2 p}{p+1}}}$ with $R=\|\gamma_0 \|_{ {\mathfrak{S}^{\frac{2 p}{p+1}}}}<\infty$, there exists $T=T(R, \|w\|_{L^{p^\prime}(X)})$ (without loss of generality, we can assume $T< T_0$)  and a unique 
			$\gamma \in  C^0([0, T], {\mathfrak{S}^{\frac{2 p}{p+1}}} )$  
			satisfying
			$\rho_{\gamma} \in  L^q   \left([0, T], L^p(X)\right)$
			and
			
			\begin{align}\label{eq22}
				\begin{cases}
					i\partial_t \gamma = [L+w*\rho_\gamma, \gamma],\\
					\gamma|_{t=0 }= \gamma_0.
				\end{cases} 
			\end{align}	 
			\item  (Almost global well-posedness) For each $T>0$ and for  any $\gamma_0  \in  {\mathfrak{S}^{\frac{2 p}{p+1}}}$ with $\|\gamma_0 \|_{ {\mathfrak{S}^{\frac{2 p}{p+1}}}}\leq R_T$,  where  $R_T=R_T( \|w\|_{L^{p^\prime}(X)})$, then there exists a solution 	$\gamma \in  C^0([0, T], {\mathfrak{S}^{\frac{2 p}{p+1}}} )$  satisfying (\ref{eq22})   and   $\rho_{\gamma} \in  L^q   \left([0, T], L^p(X)\right)$.
		\end{enumerate}   
	\end{theorem}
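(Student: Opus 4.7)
The plan is to recast \eqref{eq22} via Duhamel's formula as a fixed-point equation and solve it by Banach's contraction principle in a space tuned to the orthonormal Strichartz estimates. Writing $V_\gamma(t):=w*\rho_{\gamma(t)}$, a solution of \eqref{eq22} is formally
\begin{equation*}
\Phi(\gamma)(t):=e^{-itL}\gamma_0\,e^{itL}-i\int_0^t e^{-i(t-s)L}\bigl[V_\gamma(s),\gamma(s)\bigr]e^{i(t-s)L}\,ds,
\end{equation*}
and I would work in the Banach space
\begin{equation*}
X_T:=\bigl\{\gamma\in C\bigl([0,T];\mathfrak{S}^{\frac{2p}{p+1}}\bigr):\rho_\gamma\in L^q([0,T],L^p(X))\bigr\}
\end{equation*}
equipped with the norm $\|\gamma\|_{X_T}:=\|\gamma\|_{L^\infty_T\mathfrak{S}^{\frac{2p}{p+1}}}+\|\rho_\gamma\|_{L^q_TL^p}$, seeking a fixed point of $\Phi$ on a closed ball of radius $\sim R:=\|\gamma_0\|_{\mathfrak{S}^{\frac{2p}{p+1}}}$.

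The nonlinear estimates rest on two ingredients. First, Theorem \ref{In} applied with source $R(s)=-i[V_\gamma(s),\gamma(s)]$ yields
\begin{equation*}
\|\rho_{\Phi(\gamma)}\|_{L^q_TL^p}\leq C_S\|\gamma_0\|_{\mathfrak{S}^{\frac{2p}{p+1}}}+C_S\Bigl\|\int_0^T e^{-isL}[V_\gamma(s),\gamma(s)]e^{isL}\,ds\Bigr\|_{\mathfrak{S}^{\frac{2p}{p+1}}}.
\end{equation*}
Combining Minkowski, the operator Schatten--H\"older bound $\|V\gamma\|_{\mathfrak{S}^r}\leq\|V\|_{L^\infty(X)}\|\gamma\|_{\mathfrak{S}^r}$, Young's convolution inequality $\|V_\gamma(s)\|_{L^\infty(X)}\leq\|w\|_{L^{p'}}\|\rho_\gamma(s)\|_{L^p}$, and H\"older in time with exponents $(q,q')$ then gives
\begin{equation*}
\Bigl\|\int_0^T e^{-isL}[V_\gamma,\gamma]e^{isL}\,ds\Bigr\|_{\mathfrak{S}^{\frac{2p}{p+1}}}\lesssim T^{1/q'}\|w\|_{L^{p'}}\|\rho_\gamma\|_{L^q_TL^p}\|\gamma\|_{L^\infty_T\mathfrak{S}^{\frac{2p}{p+1}}}.
\end{equation*}
Applied directly to $\Phi(\gamma)(t)$, using unitarity of $e^{-itL}$ on $\mathfrak{S}^{\frac{2p}{p+1}}$, the same chain bounds $\sup_t\|\Phi(\gamma)(t)\|_{\mathfrak{S}^{\frac{2p}{p+1}}}$ by $\|\gamma_0\|_{\mathfrak{S}^{\frac{2p}{p+1}}}+CT^{1/q'}\|w\|_{L^{p'}}\|\gamma\|_{X_T}^2$. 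Linearity of the density map (so $\rho_{\gamma_1}-\rho_{\gamma_2}=\rho_{\gamma_1-\gamma_2}$) and the decomposition $[V_{\gamma_1},\gamma_1]-[V_{\gamma_2},\gamma_2]=[V_{\gamma_1-\gamma_2},\gamma_1]+[V_{\gamma_2},\gamma_1-\gamma_2]$ let the same argument produce the Lipschitz bound
\begin{equation*}
\|\Phi(\gamma_1)-\Phi(\gamma_2)\|_{X_T}\lesssim T^{1/q'}\|w\|_{L^{p'}}\bigl(\|\gamma_1\|_{X_T}+\|\gamma_2\|_{X_T}\bigr)\|\gamma_1-\gamma_2\|_{X_T}.
\end{equation*}
Choosing $T=T(R,\|w\|_{L^{p'}})$ so that $T^{1/q'}\|w\|_{L^{p'}}R\ll 1$ makes $\Phi$ a contraction on the ball of radius $2CR$ in $X_T$, and Banach's theorem yields item (1).

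For item (2) I would iterate the local theory. The crucial observation is that, viewing $V_\gamma(t)$ as a given time-dependent real potential, the unique solution writes $\gamma(t)=U(t,0)\gamma_0 U(t,0)^*$, where $U(t,s)$ is the unitary propagator of the self-adjoint operator $L+V_\gamma(t)$; consequently $\|\gamma(t)\|_{\mathfrak{S}^{\frac{2p}{p+1}}}\equiv R$ is conserved along the trajectory and the local existence time $T_\ast\sim(R\|w\|_{L^{p'}})^{-q'}$ is uniform across re-starts. Partitioning $[0,T]$ into $\lceil T/T_\ast\rceil$ pieces of length $T_\ast$ and gluing the local solutions produces a solution on $[0,T]$, provided the accumulated density bound of size $\sim (T/T_\ast)^{1/q}R$ still fits the contraction threshold --- this is precisely what pins down $R_T=R_T(T,\|w\|_{L^{p'}})$. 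The main obstacle, and the reason this delivers only an \emph{almost} global result, is exactly this $L^q$-in-time summation loss when re-assembling the density across subintervals; bypassing it would demand a finer duality-based bound on $\bigl\|\int_0^T e^{-isL}[V_\gamma,\gamma]e^{isL}\,ds\bigr\|_{\mathfrak{S}^{\frac{2p}{p+1}}}$ in the spirit of the proof of Theorem \ref{Innn}, rather than the crude Minkowski--operator--H\"older chain used here.
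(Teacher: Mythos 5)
For part (1) your approach is essentially the paper's: Duhamel plus contraction in a space built out of the $C^0_T\mathfrak{S}^{\frac{2p}{p+1}}$ norm and the $L^q_TL^p$ density norm, with the same chain of nonlinear estimates (Schatten--H\"older, Young $\|w*\rho\|_{L^\infty}\le\|w\|_{L^{p'}}\|\rho\|_{L^p}$, H\"older in time, and the inhomogeneous orthonormal Strichartz estimate from Theorem~\ref{In}). The one structural difference is cosmetic: the paper runs the contraction on \emph{pairs} $(\gamma,\rho)$ in $C^0_T\mathfrak{S}^{\frac{2p}{p+1}}\times L^q_TL^p$, with the map $\Phi(\gamma,\rho)=(\Psi(\gamma,\rho),\rho[\Psi(\gamma,\rho)])$, whereas you fold the density norm into the norm on $\gamma$ directly. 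Both are workable; the paper's product-space formulation avoids having to check that the subset of $C^0_T\mathfrak{S}^{\frac{2p}{p+1}}$ with $\rho_\gamma\in L^q_TL^p$ is complete under your norm, so it is slightly cleaner, but your version can be made rigorous.

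Part (2) is where you and the paper genuinely diverge, and your route has real problems. The paper does not iterate the local theory at all. It simply fixes $T>0$, observes that the same two estimates
\begin{equation*}
\|\Psi(\gamma,\rho)\|_{X_T}\le(1+C_S)\bigl[\|\gamma_0\|_{\mathfrak{S}^{\frac{2p}{p+1}}}+T^{1/q'}\|w\|_{L^{p'}}\|(\gamma,\rho)\|_{X_T}^2\bigr]
\end{equation*}
hold on $[0,T]$, and then chooses $R_T$ so small that one can find $M>0$ with $(1+C_S)[R_T+T^{1/q'}\|w\|_{L^{p'}}M^2]\le M$; the contraction then closes on the ball of radius $M$ in $X_T$ directly, with no gluing. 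Your proposed gluing argument, by contrast, is internally inconsistent. You claim that $\|\gamma(t)\|_{\mathfrak{S}^{2p/(p+1)}}\equiv R$ is conserved by the (putative) unitary propagator $U(t,s)$ of $L+V_\gamma(t)$ and that the local existence time $T_*\sim(R\|w\|_{L^{p'}})^{-q'}$ depends only on $R$ and $\|w\|_{L^{p'}}$. If both of those claims were true, iterating would give \emph{global} existence for every $R$, with no need to shrink to $R_T$; the ``accumulated $L^q$ density loss'' you invoke to explain why the result is only almost-global is not a real obstruction in that framework (each restart re-derives a fresh local density bound from the conserved Schatten norm). Moreover, the conservation claim itself requires constructing and controlling the time-dependent propagator $U(t,s)$, which the contraction argument gives you no handle on. The actual reason the theorem is only ``almost global'' is the quadratic nature of the bound: the factor $T^{1/q'}$ forces the size of the admissible initial data to shrink as $T$ grows, exactly as in the paper's direct argument. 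You should abandon the iteration idea for part (2) and instead redo the fixed-point estimate on $[0,T]$ for the fixed $T$, absorbing the growth of $T^{1/q'}$ by taking $R_T$ small.
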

	\begin{proof}
		(1)	 Let $R=\left\|\gamma_0\right\|_{  {\mathfrak{S}^{\frac{2 p}{p+1}}}}<\infty$ and $T=T(R)>0$ to be chosen later. We define the evolution space 
		$$
		\begin{aligned}
			X_T:=\left\{(\gamma, \rho) \in C^0\left([0, T], {\mathfrak{S}^{\frac{2 p}{p+1}}}\right) \times L^q\left([0, T], L^p\left(X\right)\right): \|(\gamma, \rho) \|_{X_T}<   {\max\{1, C_S \} } R\right\},
		\end{aligned}
		$$
		where $$
		\|(\gamma, \rho) \|_{X_T} =\|\gamma\|_{ C^0\left([0, T], {\mathfrak{S}^{\frac{2 p}{p+1}}} \right)} +\|\rho\|_{ L^q\left([0, T], L^p\left(X\right) \right)} . $$ 
		Let us define the  map $\Psi$ as
		$$
		\Psi(\gamma, \rho)(t)=e^{i t L} \gamma_0 e^{-i t L}-i \int_0^t e^{i(t-s) L}[w * \rho(s), \gamma(s)] e^{i(s-t) L} d s .
		$$
		We also define a map
		$$
		\Phi(\gamma, \rho)=\left(\Psi(\gamma, \rho), \rho\left[\Psi(\gamma, \rho)\right]\right),
		$$
		where  we used the notation $\rho\left[\gamma\right]=\rho_\gamma$ and in this formulation, (\ref{eq22}) is equivalent to $(\gamma, \rho_\gamma)=\Phi(\gamma, \rho_\gamma)$. 
		
		First, we evalute  $ 
		\left\|\Psi(\gamma, \rho)\right\|_{C^0\left([0, T], {\mathfrak{S}^{\frac{2 p}{p+1}}} \right)} 
		$ for any $t\in [0, T]. $ Now 
		\begin{align} \label{integral}
			\left\|\Psi(\gamma, \rho)(t)\right\|_{ {\mathfrak{S}^{\frac{2 p}{p+1}}} } &\leq \|e^{i t L} \gamma_0 e^{-i t L}\|_{ {\mathfrak{S}^{\frac{2 p}{p+1}}} }+ \int_0^t \| e^{i(t-s) L}[w * \rho(s), \gamma(s)] e^{i(s-t) L} \|_{ {\mathfrak{S}^{\frac{2 p}{p+1}}} }d s 
		\end{align}
		For the  first term in the RHS of  (\ref{integral}),     since  $\left(f_j\right)_j$ is orthonormal in $L^2(X)$, then $\left(e^{i t L} f_j\right)_j$ is as well for each $t$  and thus 
		$$
		\left\|e^{i t L} \gamma_0 e^{-i t L}\right\|_{{\mathfrak{S}^{\frac{2 p}{p+1}}} }=\left\|\gamma_0\right\|_{\mathfrak{S}^{\frac{2 p}{p+1}}}=R .
		$$
		For the second term of (\ref{integral}), we use   Hölder's inequality for Schatten spaces  
		$$
		\begin{aligned}
			\| e^{i(t-s) L}[w * \rho(s), \gamma(s)] e^{i(s-t) L} \|_{ {\mathfrak{S}^{\frac{2 p}{p+1}}} }& \leq   \|w * \rho(s) \|_{L^\infty(X)} \| \gamma (s) \|_{ {\mathfrak{S}^{\frac{2 p}{p+1}}} }\\
			& \leq   \|w \|_{L^{p'}(X)}\| \rho(s) \|_{_{L^{p}(X)}} \| \gamma (s) \|_{ {\mathfrak{S}^{\frac{2 p}{p+1}}} }
		\end{aligned}
		$$
		Thus for any $(\gamma, \rho)\in X_T$,  from (\ref{integral}), we get 
		\begin{align}\label{eq11}\nonumber
			\left\|\Psi(\gamma, \rho)\right\|_{_{C^0\left([0, T], {\mathfrak{S}^{\frac{2 p}{p+1}}} \right)} } &\leq R+ \int_0^T    \|w \|_{L^{p'}(X)}\| \rho(s) \|_{_{L^{p}(X)}} \| \gamma (s) \|_{ {\mathfrak{S}^{\frac{2 p}{p+1}}} } d s \\\nonumber
			&\leq R+ T^{\frac{1}{q'}} \|w \|_{L^{p'}(X)}   \| \rho \|_{_{L^q\left([0, T], L^{p}(X)\right)}} \| \gamma \|_{_{C^0\left([0, T], {\mathfrak{S}^{\frac{2 p}{p+1}}} \right)} }\\
			&\leq R+ T^{\frac{1}{q'}}   {\max\{1, C_S^2 \} } R^2 \|w \|_{L^{p'}(X)} .
		\end{align}
		Now our  aim is to estimate    $ 
		\left\|\rho[ \Psi(\gamma, \rho)]\right\|_{_{ L^q\left([0, T], L^p\left(X\right) \right)}  } 
		$. From the inhomogeneous Strichartz estimate in Theorem \ref{In}, we get 
		\begin{align}\label{eq12}\nonumber
			&\left\|\rho[ \Psi(\gamma, \rho)]\right\|_{_{ L^q\left([0, T], L^p\left(X\right) \right)}  }\\\nonumber
			&\leq 	 C_S\|\gamma_0\|_{\mathfrak{S}^{\frac{2 p}{p+1}}}+ \left\|\int_{\mathbb{R}} e^{-i s L}e^{i t L}|[w * \rho(s), \gamma(s)] | e^{-i t L}e^{i s L} d s\right\|_{\mathfrak{S}^{\frac{2 p}{p+1}}}\\
			&\leq 	 C_S R+ C_S T^{\frac{1}{q'}}   {\max\{1, C_S^2 \} } R^2 \|w \|_{L^{p'}(X)} ,
		\end{align}
		where for the first term of the above estimate,  we used $\left\|\gamma_0\right\|_{\mathfrak{S}^{\frac{2 p}{p+1}}}=R$ and for the second term, we employed the same argument as we used in  (\ref{eq11}). Thus from (\ref{eq11}) and \eqref{eq12}, we get 
		
		\begin{align}
			\|\Psi(\gamma, \rho)\|_{X_T}\leq (1+C_S)\left[ 	R+ T^{\frac{1}{q'}}   {\max\{1, C_S^2 \} }R^2 \|w \|_{L^{p'}(X)} \right].
		\end{align}
		Now, choose  $T=T(R)>0$ small enough so that  \begin{align*}
			\|\Psi(\gamma, \rho)\|_{X_T}\leq  \max\{1, C_S \} R. 
		\end{align*}
		This implies that 	$\Phi(\gamma, \rho)\in X_T$  for $(\gamma, \rho)\in X_T$, i.e., $\Phi $ maps $X_T$ to itself. 
		We can give a similar argument to show that $\Phi$ is a contraction on $X_T$.  Since $X_T$ is a Banach space and  $\Phi $ is a contraction map on  $X_T$, by Banach's fixed point theory,  $\Phi$ has a unique fixed point on $X_T$ which is a solution to the Hartree equation (\ref{eq22}) on $[0, T]$.\\
		
		(2) Now,  we first fix an arbitrary $T>0$.   Then estimates   (\ref{eq11}) and \eqref{eq12}    yield that
		\begin{align}
			\|\Psi(\gamma, \rho)\|_{X_T} \leq (1+C_S) \left[ 	 \|\gamma_0\|_{\mathfrak{S}^{\frac{2 p}{p+1}}}+ T^{\frac{1}{q'}}   \|w \|_{L^{p'}(X)}  \|(\gamma, \rho)\|_{X_T}^2 \right].
		\end{align}
		Keeping  this in mind, we choose $R_T=R_T\left( \|w \|_{L^{p'}(X)} \right)$ small enough  so that we can find $M>0$ such that for any $y \in[0, M]$, it holds
		$$
		(1+C_S) \left[ 	 \|\gamma_0\|_{\mathfrak{S}^{\frac{2 p}{p+1}}}+ T^{\frac{1}{q'}}   \|w \|_{L^{p'}(X)} y^2 \right] \leq M
		$$
		as long as $\|\gamma_0\|_{\mathfrak{S}^{\frac{2 p}{p+1}}} \leq R_T$. Therefore, if we define the space $X_{T, M}$ by
		$$
		X_{T, M}:=\left\{(\gamma, \rho) \in X_T:\|(\gamma, \rho)\|_{X_T} \leq M\right\},
		$$
		then wecan  see that $\Phi$ maps $ X_{T, M} $ to $ X_{T, M}$ itself. By choosing $R_T$ even further smaller, we also can show that $\Phi$ is a contraction map on $X_{T, M}$ in a similar way as we discussed earlier. Thus  by the Banach  fixed point theorem, we find a solution 	$\gamma \in  C^0([0, T], {\mathfrak{S}^{\frac{2 p}{p+1}}} )$  
		satisfying
		$\rho_{\gamma} \in  L^q   \left([0, T], L^p(X)\right)$  for   fix an arbitrary $T > 0$.
	\end{proof}

	\section{Generalized Strichartz results by Kenig-Ponce-Vega}\label{sec7}
	This section is devoted to extending the Strichartz (or ``global smoothing") estimates considered in \cite{Kenig-Ponce-Vega} involving a single function to a system of orthonormal functions using the techniques used in the proof Theorem \ref{OSI}.
	\subsection{Orthonormal Strichartz inequalities on $\mathbb{R}$}
	First, we recall the notion of the following class $\mathcal{A}$ of phase functions introduced by Kenig-Ponce-Vega \cite{Kenig-Ponce-Vega}.
	\begin{definition}\label{Definition}
		We say $\phi\in\mathcal{A}$ if
		\begin{itemize}
			\item $\phi: \Omega\rightarrow\mathbb{R}$ for some open $\Omega\in\mathbb{R}$, $\phi\in C^3(\Omega)$, and $\Omega$ is a finite union of intervals,
			\item the set $S_\phi=\{\xi\in \bar{\Omega}\cup \{\pm\infty\}: \phi''(\xi)=0 \text{ or }\lim\limits_{\bar{\xi}\rightarrow\xi} \phi''(\bar{\xi})=\phi''(\xi)=\pm\infty\}$ is finite,
			\item if $\xi_0\in S_\phi$ with $\xi_0\neq \pm\infty$, then there exist constants $\varepsilon,c_1,c_2$ and $\alpha\neq0$ such that for $|\xi-\xi_0|<\varepsilon$,
			\begin{equation*}
				c_1|\xi-\xi_0|^{\alpha-2}\leq |\phi''(\xi)|\leq c_2|\xi-\xi_0|^{\alpha-2},
			\end{equation*}
			\item if $\xi_0=\pm\infty\in S_\phi$, then there exist constants $\varepsilon,c_1,c_2$ and $\alpha\neq0$ such that for $|\xi|>\frac{1}{\varepsilon}$,
			\begin{equation*}
				c_1|\xi|^{\alpha-2}\leq |\phi''(\xi)|\leq c_2|\xi|^{\alpha-2},
			\end{equation*}
			\item $\phi''$ has a finite number of changes of monotonicity.
		\end{itemize}
	\end{definition}
	\begin{remark} It is easily checked that if $\phi$ is not linear and $\phi(\xi)=R(\xi)^\alpha$, where $R(\xi)$ is a rational function and $\alpha\in\mathbb{R}$, then $\phi\in\mathcal{A}$.
	\end{remark}
	Let $\phi\in\mathcal{A}$. Then for $\nu\geq 0$ and $(x,t)\in\mathbb{R}\times\mathbb{R},$, the operator $\mathcal{W}_\nu(t) $  is     defined  as
	\begin{equation*}
		\mathcal{W}_\nu(t) f(x)=\int_{\Omega} e^{i\left(t\phi(\xi)+x\xi\right)}|\phi''(\xi)|^\frac{\nu}{2} \hat{f}(\xi) d\xi.
	\end{equation*}
	In \cite{Kenig-Ponce-Vega}, Kenig-Ponce-Vega obtained the following Strichartz estimate for the operator $\mathcal{W}_\nu$ involving a single function.
	\begin{theorem}\label{One dim}
		Let $\phi\in\mathcal{A}$. Then for any $\nu\in [0,1]$ and $(p,q)=\left(\frac{2}{1-\nu},\frac{4}{\nu}\right)$,
		\begin{align*}
			\|\mathcal{W}_\frac{\nu}{2}(t) f\|_{L^q(\mathbb{R},L^p(\mathbb{R}))}&\leq C\|f\|_{L^2(\mathbb{R})},\\
			\left\|\int_{\mathbb{R}}\mathcal{W}_\nu(t-s)F(\cdot, s)ds\right\|_{L^q(\mathbb{R},L^p(\mathbb{R}))}&\leq C\|F\|_{L^{q'}(\mathbb{R},L^{p'}(\mathbb{R}))},
		\end{align*}
		for a constant $C>0$ independent of $f$ and $F$.
	\end{theorem}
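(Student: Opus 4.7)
The plan is to reduce both displayed estimates to a single $TT^\ast$ bound and to obtain that bound from a dispersive inequality followed by Hardy--Littlewood--Sobolev. Setting $T_\nu(t) = \mathcal{W}_{\nu/2}(t)$, a short Fourier-side computation (using that the weight $|\phi''(\xi)|^{\nu/4}$ is real) shows $T_\nu(s)^\ast = T_\nu(-s)$, and hence the composition identity
\[
T_\nu(t)\, T_\nu(s)^\ast \;=\; c\,\mathcal{W}_\nu(t-s).
\]
By the standard $TT^\ast$ principle, the first displayed inequality $\|T_\nu(t)f\|_{L^q_t L^p_x} \lesssim \|f\|_{L^2_x}$ is equivalent to the bound
\[
\left\| \int_{\mathbb R} \mathcal{W}_\nu(t-s)\,F(\cdot,s)\,ds \right\|_{L^q_t L^p_x} \lesssim \|F\|_{L^{q'}_t L^{p'}_x},
\]
which is exactly the second displayed inequality. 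Hence it will suffice to prove this inhomogeneous estimate.

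The next step is to derive $L^{p'}\to L^p$ decay for $\mathcal{W}_\nu(\tau)$ for $\nu\in[0,1]$. At $\nu = 0$, Plancherel makes $\mathcal{W}_0(\tau) = e^{i\tau\phi(D)}$ unitary on $L^2$. At $\nu = 1$, the kernel
\[
K^1_\tau(x,y) = \int e^{i(\tau\phi(\xi) + (x-y)\xi)}\, |\phi''(\xi)|^{1/2}\, d\xi
\]
satisfies $|K^1_\tau(x,y)| \lesssim |\tau|^{-1/2}$ uniformly, by the dispersive lemma implicit in $\phi\in\mathcal{A}$ (the decay lemma invoked in the preceding remark); heuristically, stationary phase cancels the amplitude $|\phi''(\xi_0)|^{1/2}$ against the factor $|\tau\phi''(\xi_0)|^{-1/2}$ arising from the second derivative of the phase, producing a weight-free bound. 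Embedding both endpoints into the analytic family $\{\mathcal{W}_z(\tau)\}$ on the strip $0 \leq \operatorname{Re}(z)\leq 1$ defined by the Fourier multiplier $e^{i\tau\phi(\xi)}|\phi''(\xi)|^{z/2}$ (which has unit modulus when $\operatorname{Re}(z) = 0$), Stein's complex interpolation in the spirit of Subsection \ref{Stein} then delivers
\[
\|\mathcal{W}_\nu(\tau) g\|_{L^p(\mathbb R)} \;\lesssim\; |\tau|^{-\nu/2}\,\|g\|_{L^{p'}(\mathbb R)}, \qquad p = \tfrac{2}{1-\nu}.
\]

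Finally, Minkowski in $x$ followed by the one-dimensional Hardy--Littlewood--Sobolev inequality in $t$ applied to the kernel $|t-s|^{-\nu/2}$ produces the desired $L^{q'}_t L^{p'}_x \to L^q_t L^p_x$ bound, the scaling relation $1/q' - 1/q = 1 - \nu/2$ recovering precisely $q = 4/\nu$. This establishes the second displayed inequality, and hence the first by $TT^\ast$ duality.

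The chief technical obstacle is fitting the weighted multiplier $|\phi''(\xi)|^{z/2}$ into Stein's analytic interpolation framework, given that $\phi''$ may vanish or blow up on the finite exceptional set $S_\phi$. The controlled asymptotics $|\phi''(\xi)|\sim|\xi-\xi_0|^{\alpha-2}$ near each $\xi_0\in S_\phi$ built into the definition of class $\mathcal{A}$ are exactly what is required: they guarantee that the $L^2$ multiplier norm on $\operatorname{Re}(z)=0$ grows at most exponentially in $|\operatorname{Im}(z)|$, and they allow one to absorb the stationary-phase contributions near $S_\phi$ into the uniform kernel bound at $\operatorname{Re}(z)=1$. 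Once these are verified, the interpolation and the HLS step are routine.
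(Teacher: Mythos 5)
Your proposal is correct and follows essentially the approach the paper itself indicates for this result: the paper does not give a proof of Theorem~\ref{One dim} (it is cited from Kenig--Ponce--Vega), but the remark preceding Theorem~\ref{SS1} explicitly sketches exactly your route --- exploiting the pseudo-semigroup identity $\mathcal{W}_{\nu/2}(t)\,\mathcal{W}_{\nu/2}(s)=\mathcal{W}_{\nu}(t+s)$ to set up $TT^*$, invoking the kernel decay of Lemma~\ref{decay-1} at $\operatorname{Re}(z)=1$, interpolating against the $L^2$-unitary line $\operatorname{Re}(z)=0$ via Stein's analytic interpolation, and closing with Hardy--Littlewood--Sobolev in time. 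Your computations of the composition identity, the exponent relations $p=2/(1-\nu)$, $q=4/\nu$, and the HLS scaling are all correct, so the proposal accurately reconstructs the intended argument.
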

	The above Strichartz estimates on $\R$ can be interpreted as restriction estimates on certain hypersurfaces of $\mathbb{R}^2$. In fact, let $S$ be the noncompact hypersurface of $\mathbb{R}^2$ defined by
	$$S=\{(\xi, \phi(\xi): \xi\in\mathbb{R}\},$$
	endowed with the measure
	$$d\sigma_\nu(\xi,\phi(\xi))=\frac{|\phi''(\xi)|^\frac{\nu}{2}}{\sqrt{1+|\phi'(\xi)|^2}}d\xi.$$
	One has the following identity. For all $F\in L^1(S,d\sigma_\nu)\cap L^2(S,d\sigma_\nu)$ and $(x,t)\in\mathbb{R}^2$, we have
	\begin{align*}
		E^\nu_SF(x,t)&=\frac{1}{2\pi}\int_S e^{i(x,t)\cdot (\xi,\phi(\xi))} F(\xi,\phi(\xi))d\sigma_\nu(\xi,\phi(\xi))\\
		&=\frac{1}{2\pi}\int_\mathbb{R}e^{i\left(t\phi(\xi)+x\xi\right)} F(\xi,\phi(\xi))|\phi''(\xi)|^\frac{\nu}{2} d\xi.
	\end{align*}
	In particular, choose $f:\mathbb{R}\rightarrow \mathbb{C}$ such that $\hat{f}(\xi)=|\phi''(\xi)|^\frac{\nu}{4}F(\xi,\phi(\xi))$. Then we see that
	$\|f\|_{L^2(\mathbb{R})}=\|F\|_{L^2(S,d\sigma_\nu)}$ and
	\begin{equation}\label{relation}
		E^\nu_SF(x,t)=\frac{1}{2\pi}\mathcal{W}_\frac{\nu}{2}(t) f(x).
	\end{equation}
	Since $\mathcal{W}_\frac{\nu}{2}(t)$ is bounded from $L^2(\mathbb{R})$ to $L^q(\mathbb{R}, L^p(\mathbb{R}))$, we have the following restriction estimate.
	\begin{theorem}Suppose $\phi\in\mathcal{A}$, $\nu\in [0,1]$ and $(p,q)=\left(\frac{2}{1-\nu},\frac{4}{\nu}\right)$.
		Let $S=\{(\xi, \phi(\xi): \xi\in\mathbb{R}\}$
		be the noncompact hypersurface of $\mathbb{R}^2$ endowed with the measure $d\sigma_\nu(\xi,\phi(\xi))=\frac{|\phi''(\xi)|^\frac{\nu}{2}}{\sqrt{1+|\phi'(\xi)|^2}}d\xi$. Then for any $F\in L^2(S,d\sigma_\nu)$,
		\begin{equation*}
			\|E^\nu_SF\|_{L^q(\mathbb{R}, L^p(\mathbb{R}))\lesssim \|F\|_{L^2(S,d\sigma_\nu)}.}
		\end{equation*}
	\end{theorem}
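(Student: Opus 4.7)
The plan is to derive the restriction estimate directly from the Strichartz bound for the operator $\mathcal{W}_{\nu/2}(t)$ that was recorded as Theorem \ref{SS1}, using the concrete identity (\ref{relation}) that identifies the extension operator $E^\nu_S$ with a single-particle ``propagator'' on the real line. No Schatten/duality machinery is needed here since the statement is for a single function on $S$.

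First, given $F\in L^2(S,d\sigma_\nu)$, I would define $f:\mathbb{R}\to\mathbb{C}$ by specifying its Fourier transform
\begin{equation*}
    \hat f(\xi) \;=\; |\phi''(\xi)|^{\nu/4}\,F(\xi,\phi(\xi)).
\end{equation*}
A direct substitution into the definition of $\mathcal{W}_{\nu/2}(t)$ shows
\begin{equation*}
    \mathcal{W}_{\nu/2}(t)f(x) \;=\; \int_{\mathbb{R}} e^{i(t\phi(\xi)+x\xi)}|\phi''(\xi)|^{\nu/4}\hat f(\xi)\,d\xi \;=\; \int_{\mathbb{R}} e^{i(t\phi(\xi)+x\xi)}|\phi''(\xi)|^{\nu/2}F(\xi,\phi(\xi))\,d\xi,
\end{equation*}
which, up to the fixed constant $1/(2\pi)$, is precisely $E^\nu_S F(x,t)$. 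This is exactly the content of (\ref{relation}); I would simply reproduce that half-line computation for completeness.

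Second, by Plancherel's theorem,
\begin{equation*}
    \|f\|_{L^2(\mathbb{R})}^2 \;=\; \|\hat f\|_{L^2(\mathbb{R})}^2 \;=\; \int_{\mathbb{R}}|\phi''(\xi)|^{\nu/2}|F(\xi,\phi(\xi))|^2\,d\xi,
\end{equation*}
which matches $\|F\|_{L^2(S,d\sigma_\nu)}^2$ after the corresponding change of variables on $S$, as already noted in the paragraph immediately preceding the theorem.

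Third, I would apply Theorem \ref{SS1}/Theorem \ref{One dim} with the same exponents $(p,q)=\bigl(\tfrac{2}{1-\nu},\tfrac{4}{\nu}\bigr)$ to obtain
\begin{equation*}
    \|E^\nu_S F\|_{L^q(\mathbb{R},L^p(\mathbb{R}))} \;=\; \tfrac{1}{2\pi}\|\mathcal{W}_{\nu/2}(t)f\|_{L^q(\mathbb{R},L^p(\mathbb{R}))} \;\leq\; C\,\|f\|_{L^2(\mathbb{R})} \;\lesssim\; \|F\|_{L^2(S,d\sigma_\nu)},
\end{equation*}
which is the desired restriction inequality. Since the difficult dispersive/interpolation analysis has already been carried out in the proof of Theorem \ref{SS1}, the only thing to be careful about is the bookkeeping of constants and the identification of norms via Plancherel; this is the one mild technical point to verify, but it does not constitute a substantive obstacle.
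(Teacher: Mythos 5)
Your proof is exactly the paper's argument: you set $\hat f(\xi)=|\phi''(\xi)|^{\nu/4}F(\xi,\phi(\xi))$, observe via Plancherel that $\|f\|_{L^2(\mathbb{R})}=\|F\|_{L^2(S,d\sigma_\nu)}$, use the identity $E^\nu_S F=\tfrac{1}{2\pi}\mathcal{W}_{\nu/2}(t)f$, and then invoke the single-function Strichartz bound of Theorem \ref{One dim}, which is precisely what the paper does in the lines preceding the theorem. The argument is correct and complete.
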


	For $\phi\in\mathcal{A}$ and  $Re(z)\geq 0$,  we define 
	\begin{equation*}
		I_z(x,t)=\int_{\Omega} e^{i\left(t\phi(\xi)+x\xi\right)}|\phi''(\xi)|^\frac{z}{2} d\xi, \quad (x,t)\in\mathbb{R}\times\mathbb{R}.
	\end{equation*}
	Based on \cite[Lemma 2.7]{Kenig-Ponce-Vega}, we can estimate decay rates as follows.
	\begin{lemma}\label{decay-1}
		Let $\phi\in\mathcal{A}$. Then for $Re(z)=1$,
		\begin{equation*}
			|I_z(x,t)|\leq C(1+|Im(z)|)|t|^{-\frac{1}{2}},
		\end{equation*}
		for a constant $C>0$ independent of $x$ and $t$.
	\end{lemma}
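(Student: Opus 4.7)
The plan is to establish this as a one-dimensional stationary phase / van der Corput estimate, with the phase $\psi(\xi) = t\phi(\xi) + x\xi$ satisfying $\psi''(\xi) = t\phi''(\xi)$. The heuristic is that near a nondegenerate critical point $\xi_c$ (where $\phi'(\xi_c) = -x/t$), stationary phase contributes $|t\phi''(\xi_c)|^{-1/2}$, which exactly cancels the amplitude factor $|\phi''(\xi_c)|^{1/2}$ coming from $|\phi''|^{z/2}$ with $\mathrm{Re}(z)=1$, producing $|t|^{-1/2}$. The real work is verifying this uniformly in $x$ and in the presence of the degenerate points of~$S_\phi$.

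First, I would exploit the structural hypotheses on the class $\mathcal{A}$: since $S_\phi$ is finite and $\phi''$ has finitely many changes of monotonicity, $\Omega$ decomposes into a finite union of open subintervals $I_j$ on each of which $\phi''$ is of constant sign, monotonic, and $C^1$. It suffices to bound $\int_{I_j} e^{i\psi(\xi)}|\phi''(\xi)|^{z/2}\,d\xi$ uniformly. On intervals whose closure is disjoint from $S_\phi$, $|\phi''|$ is bounded above and below, so the second-derivative form of van der Corput's lemma applies directly: with $|\psi''| \gtrsim |t|$ and amplitude $g(\xi) = |\phi''(\xi)|^{z/2}$, writing $|\phi''|^{iy/2} = e^{i(y/2)\log|\phi''|}$ for $y = \mathrm{Im}(z)$ shows that $\|g\|_\infty$ is bounded and the total variation of $g$ is controlled by $1 + |y|$ (the factor $y$ appearing from differentiating the logarithm). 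Van der Corput then yields the bound $C(1+|y|)|t|^{-1/2}$.

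The substantive case is a neighborhood of a point $\xi_0 \in S_\phi$, where $c_1|\xi-\xi_0|^{\alpha-2} \le |\phi''(\xi)| \le c_2|\xi-\xi_0|^{\alpha-2}$. Here I would perform a dyadic decomposition with respect to the distance to $\xi_0$: on the shell $A_k = \{\xi : |\xi-\xi_0| \sim 2^{-k}\}$, the amplitude satisfies $|\phi''|^{1/2} \sim 2^{-k(\alpha-2)/2}$, the variation of $|\phi''|^{z/2}$ on $A_k$ is controlled by $(1+|y|)\,2^{-k(\alpha-2)/2}$, and $|\psi''| \gtrsim |t|\,2^{-k(\alpha-2)}$. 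Van der Corput on $A_k$ gives a contribution of size $(1+|y|)\,|t|^{-1/2}$, while the trivial $L^1$-bound (integrating the amplitude against the length of the shell) gives $\lesssim 2^{-k\alpha/2}$. Taking the minimum of these two bounds on each shell and summing the resulting geometric series (the crossover shell corresponds to the scale $2^{-k} \sim |t|^{-1/\alpha}$) yields the desired total bound $C(1+|y|)|t|^{-1/2}$; the case of $\xi_0 = \pm\infty$ is handled analogously using the large-$|\xi|$ polynomial bound. A similar but simpler argument handles endpoints of $\Omega$ that lie in $S_\phi \cap \{\pm\infty\}$.

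The main obstacle will be ensuring that the dyadic sum over shells really converges to $|t|^{-1/2}$ rather than producing a logarithmic loss, which requires the crossover between the trivial and oscillatory regimes to occur at a scale that makes both geometric series strictly summable. This depends on the sign and size of $\alpha$, and forces a case analysis: $\alpha > 2$ (zero of $\phi''$) versus $\alpha \in (0,2)$ (integrable blow-up of $\phi''$). A second technical point is the uniformity in $x$: the critical point $\xi_c$ depends on $x/t$, and may fall inside a dyadic shell near some $\xi_0$, but the van der Corput bound used on each shell is independent of where $\xi_c$ lies, which is exactly what makes the argument uniform in $x$. Once these details are in place, summation over the finitely many intervals $I_j$ and singular points produces the estimate $|I_z(x,t)| \le C(1+|\mathrm{Im}(z)|)|t|^{-1/2}$ as claimed.
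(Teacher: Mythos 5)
The paper does not supply its own proof of this lemma; it is quoted directly from Kenig--Ponce--Vega \cite[Lemma~2.7]{Kenig-Ponce-Vega}, so what you are doing is reconstructing their argument. Your overall strategy (van der Corput plus a decomposition adapted to $S_\phi$) is the right family of ideas, but the central dyadic step, as you have set it up, has a genuine gap that you yourself flag as ``the main obstacle'' without actually closing it.

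The problem is this: on the dyadic annulus $A_k=\{|\xi-\xi_0|\sim 2^{-k}\}$ you compute that the van der Corput bound is $\sim(1+|y|)|t|^{-1/2}$, \emph{uniformly in $k$}, precisely because the amplitude $|\phi''|^{1/2}\sim 2^{-k(\alpha-2)/2}$ exactly cancels the curvature factor $(|t|\,2^{-k(\alpha-2)})^{-1/2}$. This is not a geometric series in $k$; it is a constant. Taking the minimum with the trivial bound $2^{-k\alpha/2}$ and summing gives $\sim k_0\,(1+|y|)|t|^{-1/2}$ with $2^{-k_0}\sim|t|^{-1/\alpha}$, i.e.\ a factor of $\log|t|$. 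The remedy you propose --- a case analysis in $\alpha$ --- does not address this, because the offending van der Corput term does not depend on $\alpha$ at all: the cancellation between amplitude and curvature happens for every $\alpha\neq 0$.

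The missing idea is that you are decomposing around the wrong point. The logarithmic loss comes from applying the second-derivative test on \emph{every} annulus, but the stationary point $\xi_c$ of $\psi(\xi)=t\phi(\xi)+x\xi$ (where $\phi'(\xi_c)=-x/t$) lies in at most one annulus. On that single annulus, van der Corput gives $(1+|y|)|t|^{-1/2}$ once. On the remaining annuli near $\xi_0$ one should instead use the first-derivative test (non-stationary phase / integration by parts), where $|\psi'(\xi)|=|t|\,|\phi'(\xi)-\phi'(\xi_c)|$ grows as $\xi$ recedes from $\xi_c$ through the annuli, because $\phi''$ has constant sign there; this produces a bound that decays geometrically in the distance from the annulus containing $\xi_c$, and summing gives $|t|^{-1/2}$ without a logarithm. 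Equivalently, one can first prove the scale-invariant van der Corput estimate $\bigl|\int_a^b e^{i(t\phi+x\xi)}|\phi''|^{1/2}\,d\xi\bigr|\le C\,|t|^{-1/2}$ for $\phi''$ monotone and of constant sign on $(a,b)$, with $C$ absolute, and then invoke it once per monotonicity interval of $\phi''$ --- this is exactly the role of the preparatory lemmas in Kenig--Ponce--Vega's Section~2. Without one of these two devices, the proof as written does not deliver $C(1+|Im(z)|)|t|^{-1/2}$ but only $C(1+|Im(z)|)|t|^{-1/2}\log(2+|t|)$.

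A secondary point: for the ``good'' intervals whose closure avoids $S_\phi$, your claim that $|\phi''|$ is bounded above and below needs the interval to be compact (or $\pm\infty\notin\bar I_j$); otherwise the ratio $\sup|\phi''|/\inf|\phi''|$ entering the van der Corput constant may be infinite, and you would again need the dyadic/scale-invariant argument there as well. Since there are only finitely many such intervals this is easily patched, but it should be said.
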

	Using the aforementioned decay estimate and considering the approach employed to derive  Theorem \ref{OSI}, we now extend Strichartz estimates for the operator $\mathcal{W}_\nu$ from a single function to a system of orthonormal initial data.  
	\begin{theorem}\label{real line}
		Let $\phi\in\mathcal{A}$, $\nu\in [\frac{2}{3},1)$ and $(p,q)=\left(\frac{1}{1-\nu},\frac{2}{\nu}\right)$. 
		For any (possibly infinite) orthonormal system $\{f_j\}_{j\in J}$ in $L^2(\mathbb{R})$ and any sequence $\{n_j\}_{j\in J}$ in $\mathbb{C}$, we have
		\begin{equation*}
			\left\|\sum_{j\in J}n_j|\mathcal{W}_\frac{\nu}{2}(t)f_j|^2\right\|_{L^q(\mathcal{\mathbb{R}},L^p(\mathbb{R}))} \leq C\left\|\{n_j\}_{j\in J}\right\|_{\ell^\frac{2p}{p+1}},
		\end{equation*}
		where $C>0$ is independent of $\{f_j\}_{j\in J}$ and $\{n_j\}_{j\in J}$.
	\end{theorem}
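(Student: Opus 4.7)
The plan is to adapt the strategy of Theorem \ref{OSI}, replacing the semigroup $e^{itL}$ by the family $\mathcal{W}_{\nu/2}(t)$ and using the dispersive decay of $I_z$ at $\operatorname{Re}(z)=1$ from Lemma \ref{decay-1} together with the composition identity $\mathcal{W}_\mu(t)\mathcal{W}_\nu(s)=\mathcal{W}_{\mu+\nu}(t+s)$ (which in particular gives $\mathcal{W}_{\nu/2}(t)\mathcal{W}_{\nu/2}(-s)=\mathcal{W}_\nu(t-s)$). First, I would set $Af(x,t)=\mathcal{W}_{\nu/2}(t)f(x)$, so that $A^*F(x)=\int_{\mathbb{R}}\mathcal{W}_{\nu/2}(-s)F(\cdot,s)\,ds$ and $\mathcal{T}:=AA^*$ acts by $\mathcal{T}F(x,t)=\int_{\mathbb{R}}\mathcal{W}_\nu(t-s)F(\cdot,s)\,ds$. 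By the duality principle (Lemma \ref{duality-principle}) applied with $(p,q)=\bigl(\frac{1}{1-\nu},\frac{2}{\nu}\bigr)$ and $\lambda'=\frac{2p}{p+1}$, the target estimate is equivalent to the Schatten bound
\begin{equation*}
\|W_1\mathcal{T}W_2\|_{\mathfrak{S}^{2/\nu}(L^2(\mathbb{R}\times\mathbb{R}))}\le C\,\|W_1\|_{L^{4/(2-\nu)}(\mathbb{R},L^{2/\nu}(\mathbb{R}))}\,\|W_2\|_{L^{4/(2-\nu)}(\mathbb{R},L^{2/\nu}(\mathbb{R}))}.
\end{equation*}

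Next, following Lemma \ref{Schatten}, I would introduce a regularized analytic family of operators, for example
\begin{equation*}
T_{z,\varepsilon}F(x,t)=\int_{\varepsilon<|t-s|<1/\varepsilon}\mathcal{W}_z(t-s)\,F(\cdot,s)\,ds,
\end{equation*}
possibly augmented by a $(t-s)^{z-1}$ factor as in the proof of Lemma \ref{Schatten} in order to generate a Hilbert transform structure in $t$. At the endpoint $\operatorname{Re}(z)=1$, I would use Lemma \ref{decay-1}, namely $|I_z(x,t)|\lesssim(1+|\operatorname{Im}z|)|t|^{-1/2}$, to control the Hilbert-Schmidt norm of $W_1T_{z,\varepsilon}W_2$ via the Hardy-Littlewood-Sobolev inequality in time (in Lorentz form as in Remark \ref{Strichartz-Lorentz} to handle the borderline case). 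At the endpoint $\operatorname{Re}(z)=0$, I would exploit the fact that $\bigl||\phi''(\xi)|^{iy/2}\bigr|=1$, so $\mathcal{W}_{iy}(t)$ is an $L^2$-isometry; combining Plancherel with the Hilbert transform bound in $t$ (as in Vega's argument invoked in Lemma \ref{Schatten}) would then provide an operator-norm (i.e.\ $\mathfrak{S}^\infty$) bound for $W_1T_{z,\varepsilon}W_2$.

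Stein's analytic interpolation (Lemma \ref{Schatten-bound}) applied between these two endpoints and evaluated at $z=\nu$ delivers the required $\mathfrak{S}^{2/\nu}$ bound uniformly in $\varepsilon$, since the dependence of both endpoint constants on $|\operatorname{Im}z|$ is at most polynomial. Passing to the limit $\varepsilon\to 0^+$ and invoking Lemma \ref{duality-principle} finishes the proof. The restriction $\nu\in[\tfrac23,1)$ is natural for this scheme: $\nu<1$ is needed so that the interpolation stays strictly between the dispersive and unitary endpoints, while $\nu\ge\tfrac23$ corresponds to $2/\nu\in(2,3]$, the Schatten range coming directly out of the Hilbert--Schmidt endpoint (mirroring the range $n+1<p\le n+2$ in Lemma \ref{Schatten} with $n=1$).

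The main obstacle I expect is arranging the two endpoints of the analytic family so that the decay estimate for $I_z$ and the $L^2$-isometry property of $\mathcal{W}_{iy}$ interact correctly with the Hilbert transform trick in time. Unlike the semigroup $e^{itL}$ used in Lemma \ref{Schatten}, the family $\mathcal{W}_\nu(t)$ is not a one-parameter group in $t$ for fixed $\nu$; one must work instead with the composition formula $\mathcal{W}_\mu(t)\mathcal{W}_\nu(s)=\mathcal{W}_{\mu+\nu}(t+s)$, which forces the analytic family to vary the exponent $z$ rather than treat $\mathcal{W}_\nu$ as a fixed unitary propagator. Once this adjustment is made, the remaining steps follow the same pattern as in Lemma \ref{Schatten} and Theorem \ref{OSI}.
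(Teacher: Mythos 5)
Your proposal reproduces the paper's proof essentially step for step: duality (Lemma \ref{duality-principle}) reduces the claim to a $\mathfrak{S}^{2/\nu}$-bound on $W_1\mathcal{T}_\nu W_2$ with $\mathcal{T}_\nu F=\int\mathcal{W}_\nu(t-s)F(\cdot,s)\,ds$, one builds a Stein-analytic family $T_{z,\varepsilon}$ interpolating between a Hilbert--Schmidt bound at $\operatorname{Re}(z)=1$ (from Lemma \ref{decay-1} plus Hardy--Littlewood--Sobolev) and an operator-norm bound at $\operatorname{Re}(z)=0$ (from the $L^2$-unitarity of $\mathcal{W}_{iy}(t)$ together with the Hilbert-transform-in-time argument), and one lets $\varepsilon\to 0^+$. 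The only detail to correct is your tentative auxiliary factor: the paper uses $(t-s)^{-1+z/\nu}$ rather than $(t-s)^{z-1}$, which is needed so that at $z=\nu$ the family reduces exactly to $\mathcal{T}_\nu$ while at $\operatorname{Re}(z)=0$ the modulus is $|t-s|^{-1}$, producing the Hilbert transform kernel; with $(t-s)^{z-1}$ neither endpoint behaves correctly.
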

	\begin{proof}
		Thanks to the duality principle Theorem \ref{duality-principle}, the desired estimate holds if and only if
		\begin{equation}\label{1/2}	\left\|W_1\mathcal{T}_\nu W_2\right\|_{\mathfrak{S}^\frac{2}{\nu}(L^2(\mathbb{R}, L^2(\mathbb{R})))}\lesssim \left\|W_1\right\|_{L^\frac{4}{2-\nu}(\mathbb{R}, L^\frac{2}{\nu}(\mathbb{R}))}  \left\|W_2\right\|_{L^\frac{4}{2-\nu}(\mathbb{R}, L^\frac{2}{\nu}(\mathbb{R}))}  
		\end{equation}
		for all $W_1,W_2\in L^\frac{4}{2-\nu}(\mathbb{R}, L^\frac{2}{\nu}(\mathbb{R}))$, where $\mathcal{T}_\nu=\mathcal{W}_\frac{\nu}{2}(t)\left(\mathcal{W}_\frac{\nu}{2} (t) \right)^*$.
		
		Indeed, 
		\begin{align*}
			\mathcal{T}_\nu F(x,t)&=\int_{\mathbb{R}} {\W}_{\nu}(t-s)F(\cdot,s)ds\\
			&=\int_{\mathbb{R}}\int_{\mathbb{R}}I_\nu (x-y,t-s)F(y,s)dyds.
		\end{align*}
		For any $\varepsilon>0$, we define the operator $T_\varepsilon$  (similar as in  Lemma \ref{Schatten}) by  
		\begin{equation*}
			T_\varepsilon F(x,t)=\int_{\mathbb{R}}\int_{\mathbb{R}} \mathcal{K}_\varepsilon (x-y,t-s)F(y,s)\; dy ds,
		\end{equation*}
		where $\mathcal{K}_\varepsilon (x,t)=\chi_{|t|>\varepsilon}(t)I_\nu (x,t)$. 
		Once we prove
		\begin{equation}\label{varepsilon}	\left\|W_1T_\varepsilon W_2\right\|_{\mathfrak{S}^\frac{2}{\nu}(L^2(\mathbb{R}, L^2(\mathbb{R})))}\lesssim \left\|W_1\right\|_{L^\frac{4}{2-\nu}(\mathbb{R}, L^\frac{2}{\nu}(\mathbb{R}))}  \left\|W_2\right\|_{L^\frac{4}{2-\nu}(\mathbb{R}, L^\frac{2}{\nu}(\mathbb{R}))},
		\end{equation}
		for some $C>0$ independent of $\varepsilon$, then \eqref{1/2} follow by taking $\varepsilon\rightarrow 0$.
		
		In order to obtain \eqref{varepsilon},  we use Stein's analytic complex interpolation. We first construct an analytic family of operators for it. For any $\varepsilon>0$, we consider the analytic family of operators $T_{z,\varepsilon}$ defined by
		\begin{equation*}
			T_{z,\varepsilon}F(x,t)=\int_{\mathbb{R}}\int_{\mathbb{R}} \mathcal{K}_{z,\varepsilon} (x-y,t-s)F(y,s)\; dy ds,
		\end{equation*}
		where $z\in\mathbb{C}$ with $Re(z)\in [0,1]$ and
		\begin{equation*}
			\mathcal{K}_{z,\varepsilon}(x,t)=t^{-1+\frac{z}{\nu}}\chi_{|t|>\varepsilon}(t)I_z(x,t).
		\end{equation*}
		Clearly  $T_\varepsilon =T_{\nu,\varepsilon}$ and  it follows from Lemma \ref{decay-1} that for $Re(z)=1$
		\begin{equation}\label{Decay-1}
			\sup_{x\in \mathbb{R}}|\mathcal{K}_{z,\varepsilon}(x,t)|\leq C(1+|Im(z)|)|t|^{-\frac{3}{2}+\frac{Re(z)}{\nu}}.
		\end{equation}
		Once we prove the following two estimates
		\begin{equation}\label{T_z}
			\left\|W_1T_{z,\varepsilon} W_2\right\|_{\mathfrak{S}^2(L^2(\mathbb{R}, L^2(\mathbb{R})))}\leq C(Im(z)\|W_1\|_{L^{\frac{4\nu}{2-\nu}}\left(\mathbb{R},L^2(\mathbb{R})\right)}\|W_2\|_{L^{\frac{4\nu}{2-\nu}}\left(\mathbb{R},L^2(\mathbb{R})\right)}
		\end{equation}
		for $Re(z)=1$ and 
		\begin{equation}\label{Hilbert}
			\left\|W_1T_{z,\varepsilon} W_2\right\|_{\mathfrak{S}^\infty(L^2(\mathbb{R}, L^2(\mathbb{R})))}\leq C(Im(z))\left\|W_1\right\|_{L^\infty(\mathbb{R}, L^\infty(\mathbb{R}))}\left\|W_2\right\|_{L^\infty(\mathbb{R}, L^\infty(\mathbb{R}))}
		\end{equation}
		for $Re(z)=0$, where $C(Im(z))$ is a constant that grows exponentially with $Im(z)$, then, by Stein's complex interpolation method in Lemma \ref{Schatten-bound}, we have the required estimate  \eqref{varepsilon}.
		
		For $Re(z)=1$, our first aim is to prove   \eqref{T_z}. By \eqref{Decay-1}, the kernel of $W_1 T_{z,\varepsilon} W_2$ is bounded by 
		\begin{equation*}
			C(1+|Im(z)|)|W_1(x,t)||t-s|^{-\frac{3}{2}+\frac{1}{\nu}}|W_2(y,s)|,
		\end{equation*}
		and combined with the Hardy-Littlewood-Sobolev inequality, we have
		\begin{align*}
			\left\|W_1 T_{z,\varepsilon} W_2\right\|^2_{\mathfrak{S}^2(L^2(\mathbb{R}, L^2(\mathbb{R})))}&\leq C(1+|Im(z)|)^2\int_{\mathbb{R}^2}\int_{\mathbb{R}^2}\frac{|W_1(x,t)|^2|W_2(y,s)|^2}{|t-s|^{3-\frac{2}{\nu}}}dxdydtds\\
			&=C(1+|Im(z)|)^2\int_{\mathbb{R}^2}\frac{\|W_1(\cdot,t)\|_{L^2(\mathbb{R})}^2\|W_2(\cdot,s)\|_{L^2(\mathbb{R})}^2}{|t-s|^{3-\frac{2}{\nu}}}dtds\\
			&\leq C^2(Im(z)^2\|W_1\|^2_{L^{\frac{4\nu}{2-\nu}}\left(\mathbb{R},L^2(\mathbb{R})\right)}\|W_2\|^2_{L^{\frac{4\nu}{2-\nu}}\left(\mathbb{R},L^2(\mathbb{R})\right)}.
		\end{align*}
		On the other hand, for $Re(z)=0$, our next aim is to prove \eqref{Hilbert}.    Using the fact that $\mathfrak{S}^\infty$-norm is the operator norm and
		\begin{align*}
			\|W_1T_{z,\varepsilon}W_2\|_{\mathfrak{S}^\infty(L^2(\mathbb{R},L^2(\mathbb{R})))}&= \|W_1T_{z,\varepsilon}W_2\|_{L^2(\mathbb{R},L^2(\mathbb{R}))\rightarrow L^2(\mathbb{R},L^2(\mathbb{R}))}\\
			&\leq \|W_1\|_{L^\infty(\mathbb{R},L^\infty(\mathbb{R}))}\|T_{z,\varepsilon}\|_{L^2(\mathbb{R},L^2(\mathbb{R}))\rightarrow L^2(\mathbb{R},L^2(\mathbb{R}))}\|W_2\|_{L^\infty(\mathbb{R},L^\infty(\mathbb{R}))},
		\end{align*}
		to prove the estimate \eqref{Hilbert},  it's enough to prove that the operator $T_{z,\varepsilon}: L^2(\mathbb{R}, L^2(\mathbb{R}))\rightarrow L^2(\mathbb{R}, L^2(\mathbb{R}))$ is bounded with some constant depending only on $Im(z)$ exponentially when $Re(z)=0$. In fact, we rewrite
		\begin{align*}
			T_{z,\varepsilon} F(x,t)&=\int_\mathbb{R}\int_\mathbb{R}\mathcal{K}_{z,\varepsilon} (x-y,s)F(y,t-s)dyds\\
			&=\int_{|t|>\varepsilon} s^{-1+i\frac{Im(z)}{\nu}} ds \int_\mathbb{R} I_z(x-y,s)F(y,t-s)dy\\
			&=\int_{|t|>\varepsilon}  s^{-1+i\frac{Im(z)}{\nu}} \mathcal{W}_z(s)F(x,t-s)ds.
		\end{align*}
		The operator $\mathcal{W}_z(t)$ being unitary on $L^2(\mathbb{R})$ for $Re(z)=0$ for any $t\in\mathbb{R}$ and it reduces that
		\begin{align*}
			\|T_{z,\varepsilon} F\|_{L^2(\mathbb{R})}&=\|\mathcal{W}_z(-t)T_{z,\varepsilon} F\|_{L^2(\mathbb{R})}\\
			&=\left\|\int_{|t|>\varepsilon} s^{-1+i\frac{Im(z)}{\nu}} G(\cdot,t-s)ds\right\|_{L^2(\mathbb{R})},
		\end{align*}
		where $G(x,t)=\mathcal{W}_{2z}(-t)F(x,t)$ and
		\begin{equation}\label{norm-1}
			\|G(\cdot,t)\|_{L^2(\mathbb{R})}=\|F(\cdot,t)\|_{L^2(\mathbb{R})}.  
		\end{equation} If we define
		\begin{equation*}
			H_{z,\varepsilon}: h(t)\mapsto \int_{|t|>\varepsilon} s^{-1+iIm(z)} h(t-s)ds,
		\end{equation*}
		then we have
		\begin{equation}\label{norm1-1}
			\|T_{z,\varepsilon} F\|_{L^2(\mathbb{R},L^2(\mathbb{R}))}=\|H_{z,\varepsilon} G\|_{L^2(\mathbb{R},L^2(\mathbb{R}))}.
		\end{equation}
		Since the operator $H_{z,\varepsilon}$ is just Hilbert transform up to $iIm(z)$,  
		the operator $H_{z,\varepsilon}: L^2(\mathbb{R})\mapsto L^2(\mathbb{R})$    is bounded  with some constant depending only on $Im(z)$ exponentially. For further details, see Vega \cite{Vega}. Therefore, combining \eqref{norm-1} and \eqref{norm1-1}, we obtain
		\begin{align*}
			\|T_{z,\varepsilon} F\|_{L^2(\mathbb{R},L^2(\mathbb{R}))}&\leq C(Im(z))\|G\|_{L^2(\mathbb{R},L^2(\mathbb{R}))}\\
			&=C(Im(z))\|F\|_{L^2(\mathbb{R},L^2(\mathbb{R}))},
		\end{align*}
		which is our desired bound for $T_{z,\varepsilon}: L^2(\mathbb{R}, L^2(\mathbb{R}))\rightarrow L^2(\mathbb{R}, L^2(\mathbb{R}))$ when $Re(z)=0$.
	\end{proof}
	\begin{remark}
		One can see that, in Theorem \ref{One dim}, Kenig-Ponce-Vega obtained Strichartz type estimate for the operator $\mathcal{W}_\nu$ for dimension one involving a single function for any $\nu\in [0,1]$. However, its orthonormal version, i.e., Theorem \ref{real line}, holds only  for $ \nu\in [\frac{2}{3},1)$. This is due to the application of Hardy-Littlewood-Sobolev inequality.
	\end{remark}
	From \eqref{relation}, we also obtain the following orthonormal restriction theorem.
	\begin{theorem}\label{orthonormal-restriction}
		Suppose $\phi\in\mathcal{A}$, $\nu\in [0,1]$ and $(p,q)=\left(\frac{2}{1-\nu},\frac{4}{\nu}\right)$.
		Let $S=\{(\xi, \phi(\xi): \xi\in\mathbb{R}\}$
		be the noncompact hypersurface of $\mathbb{R}^2$ endowed with the measure $d\sigma_\nu(\xi,\phi(\xi))=\frac{|\phi''(\xi)|^\frac{\nu}{2}}{\sqrt{1+|\phi'(\xi)|^2}}d\xi$. 
		For any (possibly infinite) orthonormal system $\{F_j\}_{j\in J}$ in $L^2(S, d\sigma_\nu)$ and any sequence $\{n_j\}_{j\in J}$ in $\mathbb{C}$, we have
		\begin{equation*}
			\left\|\sum_{j\in J}n_j|E^\nu_SF_j|^2\right\|_{L^q(\mathcal{\mathbb{R}},L^p(\mathbb{R}))} \leq C\left\|\{n_j\}_{j\in J}\right\|_{\ell^\frac{2p}{p+1}},
		\end{equation*}
		where $C>0$ is independent of $\{F_j\}_{j\in J}$ and $\{n_j\}_{j\in J}$.
	\end{theorem}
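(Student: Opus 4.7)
The plan is to reduce Theorem \ref{orthonormal-restriction} to its Strichartz counterpart, Theorem \ref{real line}, via the identity \eqref{relation}, which expresses the Fourier extension operator $E^\nu_S$ as a scalar multiple of the dispersive operator $\mathcal{W}_{\nu/2}(t)$ acting on a Fourier-side reparametrization of the density. Concretely, \eqref{relation} states that $E^\nu_S F = \frac{1}{4\pi^2}\mathcal{W}_{\nu/2}(t)f$ whenever $\hat{f}(\xi) = |\phi''(\xi)|^{\nu/4} F(\xi,\phi(\xi))$. Consequently, if we can convert an orthonormal family in $L^2(S, d\sigma_\nu)$ into an orthonormal family in $L^2(\mathbb{R})$ under this Fourier-side correspondence, the orthonormal restriction estimate will follow by squaring and summing the pointwise identity and then invoking the already-established orthonormal Strichartz bound.

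Given an orthonormal system $\{F_j\}_{j\in J}\subset L^2(S, d\sigma_\nu)$, I would associate to each $F_j$ the function $f_j \in L^2(\mathbb{R})$ defined on the Fourier side by $\hat{f}_j(\xi) = |\phi''(\xi)|^{\nu/4} F_j(\xi, \phi(\xi))$. By Plancherel, the map $F \mapsto f$ is — up to absorbing the weight $(1 + |\phi'(\xi)|^2)^{-1/2}$ coming from $d\sigma_\nu$ — an isometry, so the family $\{f_j\}_{j\in J}$ may be taken to be orthonormal in $L^2(\mathbb{R})$. The pointwise identity \eqref{relation} then gives
$$\sum_{j\in J} n_j |E^\nu_S F_j(x,t)|^2 = \frac{1}{(4\pi^2)^2} \sum_{j\in J} n_j |\mathcal{W}_{\nu/2}(t)f_j(x)|^2,$$
and applying Theorem \ref{real line} to the orthonormal system $\{f_j\}$ yields the desired bound with Schatten exponent $\ell^{2p/(p+1)}$, with the factor $(4\pi^2)^{-2}$ absorbed into the constant.

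The main technical point is reconciling the normalization mismatch alluded to above: the measure $d\sigma_\nu$ carries an extra factor $(1+|\phi'(\xi)|^2)^{-1/2}$ relative to the Fourier weight $|\phi''|^{\nu/2} d\xi$ that appears on the $L^2(\mathbb{R})$ side, so the naive map $F\mapsto f$ is not exactly unitary. I would handle this by the renormalized choice $\hat{f}_j(\xi) = (1+|\phi'(\xi)|^2)^{-1/4}|\phi''(\xi)|^{\nu/4} F_j(\xi, \phi(\xi))$, which makes $F\mapsto f$ an exact isometry from $L^2(S, d\sigma_\nu)$ onto $L^2(\mathbb{R})$. The cost is that $E^\nu_S F_j$ is then $\frac{1}{4\pi^2}\widetilde{\mathcal{W}}_{\nu/2}(t) f_j$ for the slightly modified operator $\widetilde{\mathcal{W}}_{\nu/2}(t)f(x) = \int e^{i(t\phi(\xi) + x\xi)}(1+|\phi'(\xi)|^2)^{1/4}|\phi''(\xi)|^{\nu/2}\hat{f}(\xi)\,d\xi$; however, the bounded amplitude factor $(1+|\phi'(\xi)|^2)^{1/4}$ is irrelevant to the stationary-phase analysis underlying Lemma \ref{decay-1}, so the dispersive bound $|\widetilde{I}_z(x,t)|\lesssim (1+|\mathrm{Im}(z)|)|t|^{-1/2}$ for $\mathrm{Re}(z)=1$ still holds. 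The Stein--Schatten interpolation scheme used in the proof of Theorem \ref{real line} then goes through verbatim for $\widetilde{\mathcal{W}}_{\nu/2}$, completing the argument.
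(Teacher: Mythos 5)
Your overall strategy is exactly the paper's: apply the pointwise identity \eqref{relation}, observe that $F\mapsto f$ carries an orthonormal system in $L^2(S,d\sigma_\nu)$ to one in $L^2(\mathbb{R})$, square, sum against $n_j$, and cite Theorem \ref{real line}. That is the whole content of the paper's proof.

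However, your renormalization detour is both unnecessary and, as written, unsound. The factor $\frac{1}{\sqrt{1+|\phi'(\xi)|^2}}$ in $d\sigma_\nu$ is not a residual mismatch: $d\sigma_\nu$ is the density $\frac{|\phi''(\xi)|^{\nu/2}}{\sqrt{1+|\phi'(\xi)|^2}}$ against arc-length on $S$, and the arc-length element pulls back to $\sqrt{1+|\phi'(\xi)|^2}\,d\xi$ under $\xi\mapsto(\xi,\phi(\xi))$, so these Jacobians cancel and the pullback of $d\sigma_\nu$ to $\mathbb{R}$ is precisely $|\phi''(\xi)|^{\nu/2}\,d\xi$ — exactly what the paper's second display line for $E^\nu_S F$ asserts. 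Hence $\|f\|_{L^2(\mathbb{R})}^2=\int|\phi''|^{\nu/2}|F|^2\,d\xi=\|F\|_{L^2(S,d\sigma_\nu)}^2$ already, the map $F\mapsto f$ is unitary as is, and no modified operator $\widetilde{\mathcal{W}}_{\nu/2}$ is needed. More importantly, your claimed fix would actually fail: the amplitude $(1+|\phi'(\xi)|^2)^{1/4}$ is \emph{not} bounded for general $\phi\in\mathcal{A}$ (for $\phi(\xi)=\xi^3$ it grows like $|\xi|$ as $|\xi|\to\infty$), so the assertion that it is "irrelevant to the stationary-phase analysis underlying Lemma \ref{decay-1}" is not justified and that lemma would not carry over verbatim. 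This would be a genuine gap if the detour were actually required.

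Finally, your proof (like the paper's) silently inherits the hypotheses of Theorem \ref{real line}: the reduction via \eqref{relation} yields the estimate in $L^{\tilde q}_tL^{\tilde p}_x$ with $(\tilde p,\tilde q)=\bigl(\frac{1}{1-\nu},\frac{2}{\nu}\bigr)$ for $\nu\in[\tfrac23,1)$, not in $L^{q}_tL^{p}_x$ with $(p,q)=\bigl(\frac{2}{1-\nu},\frac{4}{\nu}\bigr)$ for $\nu\in[0,1]$ as written in the statement. You should flag that the exponents in the statement of Theorem \ref{orthonormal-restriction} appear to have been copied from the single-function restriction estimate (Theorem \ref{R1}) without halving, and that the range of $\nu$ one actually obtains is the one in Theorem \ref{real line}.
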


	\subsection{Orthonormal Strichartz inequalities on $\mathbb{R}^d$, $d\geq2$}
	This subsection is devoted to extending the obtained orthonormal inequalities for $\mathcal{W}_\nu$ on the real line to higher dimensions.  The first problem in the higher dimension was to find the substitute for the second derivative that appeared in the dimension case; however, as explained in \cite{Kenig-Ponce-Vega}, the stationary phase lemma suggests that this substitute should be the Hessian of the phase function. 
	
	Let $P$ be a real polynomial on $\mathbb{R}^d$ and denote $HP=\det(\partial_{ij}^2P)_{i,j}$ is the determinant of
	the Hessian matrix of $P$. For $\nu\geq 0$ and $(x,t)\in\mathbb{R}^d\times\mathbb{R}$, we define
	\begin{equation*}
		\mathcal{W}_\nu(t) f(x)=\int_{\mathbb{R}^d} e^{i\left(t P(\xi)+x\cdot\xi\right)}|HP(\xi)|^\frac{\nu}{2} \hat{f}(\xi) \psi_P(\xi)d\xi,
	\end{equation*}
	where $\psi_P$ is a suitable cut-off function.  If $P$ is a real polynomial, it is not easy to see what the Hessian of $P$ looks like, but,  things will be simple if we consider    $P$  to be a a real elliptic polynomial.  	When $P$ is a real elliptic polynomial of degree $m$ on $\mathbb{R}^d$, i.e., the principle part of $P$ never vanishes outside of the origin, then $\psi_P\in C^\infty(\mathbb{R}^d)$ satisfies that 
	$$\psi_P(\xi)=\begin{cases}1\quad \text{ if} \quad |P(\xi)|\geq C|\xi|^m,\\
		0\quad \text{ if} \quad  |P(\xi)|\leq \frac{C}{2}|\xi|^m.
	\end{cases}$$
	When $P(\xi)=\sum\limits_{j=1}^d P_j(\xi_j)$ and $P_j$ is an arbitrary real polynomial of one variable with degree $P_j\geq2$ for all $1\leq j\leq d$, then $\psi_P\equiv1$.
	
	The following smoothing effect that involves a single function has been proved by  Kenig-Ponce-Vega \cite{Kenig-Ponce-Vega}.
	\begin{theorem}\label{Smoothing}
		Let $P$ be a real elliptic polynomial of $m\geq2$ or $P(\xi)=\sum\limits_{j=1}^d P_j(\xi_j)$, where $P_j$ is an arbitrary real polynomial of one variable with degree $P_j\geq2$ for all $1\leq j\leq d$. Then for any $0\leq \nu<\frac{2}{d}$ and $(p,q)=\left(\frac{2}{1-\nu},\frac{4}{d\nu}\right)$,
		\begin{align*}
			\|\mathcal{W}_\frac{\nu}{2}(t) f\|_{L^q(\mathbb{R},L^p(\mathbb{R}^d))}&\leq C\|f\|_{L^2(\mathbb{R}^d)},\\
			\left\|\int_{\mathbb{R}}\mathcal{W}_\nu(t-s)F(\cdot, s)ds\right\|_{L^q(\mathbb{R},L^p(\mathbb{R}^d))}&\leq C\|F\|_{L^{q'}(\mathbb{R},L^{p'}(\mathbb{R}^d))},
		\end{align*}
		for a constant $C>0$ independent of $f$ and $F$.
	\end{theorem}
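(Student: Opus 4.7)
The plan is to mimic the strategy used in Theorem \ref{real line}, combining the $TT^*$ method with Stein's analytic interpolation. For $z \in \mathbb{C}$ with $0 \leq \operatorname{Re}(z) \leq 1$, I would introduce the analytic family
\begin{equation*}
\mathcal{W}_z(t) f(x) = \int_{\mathbb{R}^d} e^{i(tP(\xi)+x\cdot\xi)} |HP(\xi)|^{z/2}\, \hat{f}(\xi)\, \psi_P(\xi)\, d\xi,
\end{equation*}
and set $I_z(x,t)$ equal to the convolution kernel of $\mathcal{W}_z(t)(\mathcal{W}_{\bar z}(0))^{*}$, i.e.\ the oscillatory integral without $\hat f$. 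The goal is to prove two endpoint bounds on this family and interpolate, just as in the one-dimensional case.

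At the line $\operatorname{Re}(z) = 0$, Plancherel's theorem gives unitarity of $\mathcal{W}_z(t)$ on $L^2(\mathbb{R}^d)$ with a constant growing at most polynomially in $|\operatorname{Im}(z)|$ (coming from the analytic extension of the symbol). At the line $\operatorname{Re}(z) = 1$, the key dispersive estimate is
\begin{equation*}
\sup_{x\in\mathbb{R}^d} |I_z(x,t)| \leq C (1+|\operatorname{Im}(z)|)^{d} |t|^{-d/2},
\end{equation*}
which is exactly the higher-dimensional analogue of Lemma \ref{decay-1}. This is where ellipticity of $P$ (or the separable form $P(\xi)=\sum_j P_j(\xi_j)$) enters decisively: stationary points of the phase $tP(\xi)+x\cdot\xi$ correspond to zeros of $\nabla P(\xi)+x/t=0$, and at each such stationary point the stationary phase expansion produces a Jacobian factor $|HP(\xi)|^{-1/2}$. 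The weight $|HP|^{z/2}$ is placed precisely to cancel this degeneration, so that after the change of variables one gets a uniform $|t|^{-d/2}$ decay irrespective of the vanishing of $HP$. In the elliptic case this is handled by partitioning the support of $\psi_P$ according to the size of $|HP|$ and estimating each piece via van der Corput, while in the separable case the integral factorises into one-dimensional pieces and the one-dimensional Lemma \ref{decay-1} can be applied coordinate-wise.

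Once these two endpoints are in hand, I would close the argument as in the proof of Theorem \ref{real line}: define $T_{z,\varepsilon} F(x,t) = \int_{|t-s|>\varepsilon} s^{-1+z/(d\nu/2)} \mathcal{W}_z(t-s)F(\cdot,s)\, ds$ (adjusting the power to land on the target Lebesgue exponents), obtain an $L^{2}\to L^{2}$ bound at $\operatorname{Re}(z)=0$ via a Hilbert transform in time, and an $L^{q'}_t L^{p'}_x \to L^{q}_t L^{p}_x$ bound at $\operatorname{Re}(z)=1$ via the dispersive estimate and the one-dimensional Hardy–Littlewood–Sobolev inequality in the time variable (with exponent $1-d(1-\operatorname{Re}(z))/2$). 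Stein's interpolation in the strip then yields the retarded estimate, and the homogeneous inequality $\|\mathcal{W}_{\nu/2}(t)f\|_{L^q(\mathbb{R},L^p(\mathbb{R}^d))} \leq C\|f\|_{L^2(\mathbb{R}^d)}$ follows by the standard $TT^*$ duality together with the fact that $\mathcal{W}_{\nu/2}(t)\mathcal{W}_{\nu/2}(s)^{*}= \mathcal{W}_\nu(t-s)$.

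The main obstacle is the dispersive estimate at $\operatorname{Re}(z)=1$. In dimension one this is Lemma \ref{decay-1}, a direct van der Corput argument, but in higher dimensions one must control the stationary phase expansion uniformly in the location and degree of degeneracy of the critical points. The elliptic assumption (or the separable structure) is what makes this tractable: in the elliptic case one uses that $|HP(\xi)| \gtrsim |\xi|^{d(m-2)}$ outside a compact set and a dyadic decomposition together with scaling reduces matters to a compact uniform stationary phase estimate; in the separable case the problem factorises into $d$ copies of the one-dimensional bound. The rest of the proof is essentially the same $TT^{*}$/Stein/Hardy--Littlewood--Sobolev machinery used above.
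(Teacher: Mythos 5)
The statement you are asked to prove is not proved in the paper at all: it is Kenig--Ponce--Vega's Theorem, quoted and cited from \cite{Kenig-Ponce-Vega}. So there is no in-paper argument against which to compare your sketch. Evaluating it on its own terms:

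Your outline assembles the right ingredients --- the analytic family $\mathcal{W}_z(t)$ with weight $|HP|^{z/2}$, the $L^2$ bound at $\operatorname{Re}(z)=0$, the dispersive estimate $\sup_x|I_z(x,t)|\lesssim(1+|\operatorname{Im}(z)|)^d|t|^{-d/2}$ at $\operatorname{Re}(z)=1$ (Lemmas \ref{decay-d-1} and \ref{decay-d-2}), the $TT^*$ identity $\mathcal{W}_{\nu/2}(t)\mathcal{W}_{\nu/2}(s)^*=\mathcal{W}_\nu(t-s)$, and Hardy--Littlewood--Sobolev in time. This is indeed the standard Strichartz route Kenig--Ponce--Vega take, and it is also the route that the Appendix of this paper uses to establish Theorem \ref{Strichartz-single}.

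However, two things should be repaired. First, the specific device $T_{z,\varepsilon}F(x,t)=\int_{|t-s|>\varepsilon} s^{-1+z/(d\nu/2)}\mathcal{W}_z(t-s)F(\cdot,s)\,ds$ does not reduce to the retarded operator $\int\mathcal{W}_\nu(t-s)F(\cdot,s)\,ds$ when you set $z=\nu$: you get a spurious power $s^{-1+2/d}$. In the one-dimensional orthonormal proof (Theorem \ref{real line}) the exponent was chosen as $t^{-1+z/\nu}$ precisely so it vanishes at $z=\nu$; if you want a time regularization you must use $s^{-1+z/\nu}$ here as well. Second, and more importantly, this whole $\varepsilon$-truncated, $s$-weighted Stein interpolation is machinery imported from the Schatten-space proofs of Theorems \ref{real line} and \ref{Higher dim}; it is unnecessary for a single-function estimate. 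The cleaner (and the KPV) route is: interpolate in $z$ once to get the fixed-time decay $\|\mathcal{W}_\nu(t)\|_{L^{p'}\to L^p}\lesssim|t|^{-d\nu/2}$ for $0\le\nu\le1$, then run the ordinary $TT^*$ duality together with Hardy--Littlewood--Sobolev in the time variable exactly as in the Appendix; the constraint $d\nu/2<1$ is what produces $\nu<2/d$. With that simplification and the exponent corrected, your sketch becomes a correct account of the classical proof.
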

	\begin{remark}
		It is pointed out by Keel-Tao \cite{KT} that for $d\geq 3$, the condition can be loosened to $0\leq \nu\leq\frac{2}{d}$.
	\end{remark}
	Our main aim is to extend Theorem \ref{Smoothing} for a system of orthonormal functions. The following decay of a class of oscillatory integrals is one of the key ingredients to proving the orthonormal inequality. 
	\begin{lemma} \label{decay-d-1}
		Let $\Omega$ be an open ball or complement of a ball in $\mathbb{R}^d$. Consider $\phi$, a real $C^{d+1}$-function defined on $\Omega$ such that for some $m\geq2$, there are constants $C_1,C_2>0$ such that 
		\begin{align*}
			C_1|\xi|^{m-1}&\leq \nabla \phi(\xi)|\leq C_2|\xi|^{m-1},\\
			C_1|\xi|^{(m-1)d}&\leq |H\phi(\xi)|=|\det(\partial_{ij}^2\phi)_{i,j}(\xi)|\leq C_2|\xi|^{(m-1)d},\\
			|\partial^\alpha \phi(\xi)|&\leq C_2 |\xi|^{m-\alpha} \text{ for all $\alpha$ with } |\alpha|\leq d+1.
		\end{align*}
		Then for $Re(z)=1$
		\begin{equation*}
			\left|\int_\Omega e^{i(t\phi(\xi)+x\cdot\xi)}\psi(\xi)|H\phi(\xi)|^{\frac{z}{2}}d\xi\right|\leq C(1+|Im(z)|)^d|t|^{-\frac{d}{2}},
		\end{equation*}
		where $(x,t)\in \mathbb{R}^d\times\mathbb{R}$, $\psi$ is a smooth function vanishing with all its derivatives on the boundary of $\Omega$ and $C$ is a constant which does not depend on $x,t$ or $Im(z)$.
	\end{lemma}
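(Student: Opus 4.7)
The plan is to combine a dyadic rescaling with stationary phase analysis. First I would decompose $\psi=\sum_k \psi_k$ with $\psi_k$ supported on an annulus $\{|\xi|\sim 2^k\}$ (for $\Omega$ the complement of a ball, the sum runs over $k\geq k_0$; for $\Omega$ a ball, only finitely many pieces appear). On each piece rescale $\xi=2^k\eta$ and set $\phi_k(\eta):=2^{-km}\phi(2^k\eta)$. The growth hypothesis $|\partial^\alpha\phi(\xi)|\leq C_2|\xi|^{m-|\alpha|}$ produces uniform-in-$k$ bounds $|\partial^\alpha\phi_k(\eta)|\leq C$ on the unit annulus for $|\alpha|\leq d+1$, and the two-sided control of $|H\phi|$ translates into uniform two-sided bounds on $|H\phi_k(\eta)|$ on that annulus.

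After rescaling, the dyadic piece becomes $2^{kd}\int e^{i\lambda(\phi_k(\eta)+\mu\cdot\eta)}\psi_k(2^k\eta)\,|H\phi(2^k\eta)|^{z/2}\,d\eta$, where $\lambda:=2^{km}|t|$ and $\mu:=2^{-k(m-1)}x/t$. I split into two cases. If $\nabla\phi_k(\eta)+\mu$ stays uniformly away from zero on the support of $\psi_k(2^k\cdot)$, iterated integration by parts using the operator $L=-i|\nabla\Phi|^{-2}\nabla\Phi\cdot\nabla$ (with $\Phi=\phi_k+\mu\cdot\eta$) produces arbitrary decay in $\lambda$, so these scales contribute a rapidly decaying and summable amount. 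If a critical point $\eta_0$ exists (characterized by $\nabla\phi_k(\eta_0)=-\mu$), the uniform non-degeneracy of $H\phi_k$ allows the standard multidimensional stationary-phase asymptotic: the leading term has size $\lambda^{-d/2}|H\phi_k(\eta_0)|^{-1/2}$ times the amplitude at $\eta_0$. Converting back, the product $|H\phi(\xi_0)|^{z/2}\,|H\phi_k(\eta_0)|^{-1/2}$ with $\operatorname{Re} z=1$ has modulus one once the $2^k$-rescaling of the Hessian is taken into account, and tracking the remaining powers of $2^k$ from the Jacobian, the amplitude, and $\lambda^{-d/2}$ collapses to exactly $|t|^{-d/2}$. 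Only finitely many scales can host a critical point, since $|\nabla\phi_k|\sim|\eta|^{m-1}$ on the unit annulus forces $|\mu|\sim 1$, i.e.\ $2^{k(m-1)}\sim|x/t|$; thus summation over $k$ preserves the $|t|^{-d/2}$ bound.

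The factor $(1+|\operatorname{Im} z|)^d$ comes from the stationary-phase remainder, which requires estimating up to $d$ derivatives of the complex amplitude $|H\phi_k(\eta)|^{z/2}$ near $\eta_0$. Each application of $\partial^\alpha$ produces a polynomial in $z$ of degree $|\alpha|$ multiplied by inverse powers of $|H\phi_k|$, and the uniform lower bound on $|H\phi_k|$ lets this be absorbed into the constant, yielding at worst a factor $(1+|z|)^d$. The main obstacle will be ensuring that the stationary-phase expansion is carried out with constants uniform in the scale $k$ and in the critical-point parameter $\mu$: a naive application introduces dependence on $\mu$, so one must invoke a quantitative stationary-phase statement (for instance, via a Morse normal form for $\phi_k+\mu\cdot\eta$) whose constants depend only on the uniform bounds for $\partial^\alpha\phi_k$ with $|\alpha|\leq d+1$ and on the uniform lower bound for $|H\phi_k|$ - precisely the information the hypotheses supply. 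Careful bookkeeping of the $z$-dependence inside the remainder term, and of the powers of $2^k$ across the Jacobian, amplitude, and phase, is the most delicate part of the argument.
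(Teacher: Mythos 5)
The paper itself does not prove this lemma: it is stated as a known result, imported from Kenig--Ponce--Vega \cite{Kenig-Ponce-Vega} (the higher-dimensional analogue of their one-dimensional Lemma 2.7). So there is no in-paper proof to compare against; I can only assess your proposal on its own merits.

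Your plan — dyadic decomposition on $|\xi|\sim 2^k$, rescaling $\xi=2^k\eta$ to normalize the phase to a fixed annulus, and then a stationary/non-stationary dichotomy — is exactly the standard route to this kind of anisotropic dispersive estimate, and your power counting is right. Tracking the scalings carefully: the rescaled Hessian satisfies $H\phi_k(\eta)=2^{kd(2-m)}H\phi(2^k\eta)$, so the stationary-phase factor $|H\phi_k(\eta_0)|^{-1/2}$ combines with the amplitude $|H\phi(2^k\eta_0)|^{1/2}$ (with $\mathrm{Re}\,z=1$), the Jacobian $2^{kd}$, and $\lambda^{-d/2}=(2^{km}|t|)^{-d/2}$ so that all powers of $2^k$ cancel and leave exactly $|t|^{-d/2}$ — which is, of course, the whole point of weighting the extension operator by $|H\phi|^{z/2}$. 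The observation that each derivative falling on $|H\phi_k(\eta)|^{z/2}$ contributes one power of $z$, leading to the $(1+|\mathrm{Im}\,z|)^d$ factor after at most $d$ derivatives in the stationary-phase remainder, is also correct. And you rightly flag the need for a \emph{quantitative} stationary phase with constants depending only on the uniform $C^{d+1}$ bounds on $\phi_k$ and the uniform lower bound on $|H\phi_k|$; this is indeed the hardest bookkeeping step and is what the hypotheses of the lemma are tailored to enable.

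The main places where the sketch is thinner than it should be concern the summation over scales. First, for $\Omega$ a ball containing the origin, it is not true that ``only finitely many pieces appear'': the dyadic sum runs over $k\to-\infty$. This is not fatal — the amplitude $|H\phi|^{1/2}\lesssim|\xi|^{(m-1)d/2}$ vanishes fast enough at the origin to make the trivial bound on each piece $2^{kd(m+1)/2}$ summable, and comparing with $|t|^{-d/2}$ needs the threshold $\lambda=2^{km}|t|\lesssim 1$ — but it must be said. Second, for the non-stationary scales you assert a ``rapidly decaying and summable'' contribution, but for the complement-of-ball case the trivial bound $2^{kd+k(m-1)d/2}$ \emph{grows} with $k$, so you cannot appeal to rapid decay generically: you need to quantify the gain $|\nabla\Phi|\gtrsim\max(1,2^{(k^*-k)(m-1)})$ away from the transition scale $k^*$ determined by $|x/t|$, and show that the geometric gain in $|k-k^*|$ dominates the geometric growth of the amplitude, uniformly in $x,t$. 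This is exactly the step that makes the final constant independent of $x$. Neither issue undermines the strategy — it is the right one — but the summation argument as stated is incomplete.
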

	
	\begin{lemma}\label{decay-d-2}
		If $P(\xi)=\sum\limits_{j=1}^d P_j(\xi_j)$, where $P_j$ is an arbitrary real polynomial of one variable with degree $P_j\geq2$ for all $1\leq j\leq d$, then for $Re(z)=1$,
		\begin{equation*}
			\left|\int_{\mathbb{R}^d} e^{i\left(t P(\xi)+x\cdot\xi\right)}|HP(\xi)|^\frac{z}{2}  d\xi\right|\leq C(1+|Im(z)|)^d|t|^{-\frac{d}{2}},
		\end{equation*}
		where $C$ is a constant which does not depend on $x,t$ or $Im(z)$.
	\end{lemma}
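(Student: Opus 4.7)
The plan is to exploit the separable structure of $P(\xi)=\sum_{j=1}^d P_j(\xi_j)$ to reduce the $d$-dimensional oscillatory integral to a product of $d$ one-dimensional oscillatory integrals, each of which can be controlled by Lemma \ref{decay-1}.

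First I would observe that because $P$ is separable, all mixed partials $\partial_i\partial_j P$ vanish for $i\neq j$, so the Hessian matrix of $P$ is diagonal with entries $P_j''(\xi_j)$. Consequently,
\begin{equation*}
HP(\xi)=\det(\partial_{ij}^2 P(\xi))=\prod_{j=1}^d P_j''(\xi_j),\qquad |HP(\xi)|^{z/2}=\prod_{j=1}^d |P_j''(\xi_j)|^{z/2}.
\end{equation*}
Combined with the factorization $e^{i(tP(\xi)+x\cdot\xi)}=\prod_{j=1}^d e^{i(tP_j(\xi_j)+x_j\xi_j)}$, Fubini's theorem then yields
\begin{equation*}
\int_{\mathbb{R}^d} e^{i(tP(\xi)+x\cdot\xi)}|HP(\xi)|^{z/2}\,d\xi=\prod_{j=1}^d \int_{\mathbb{R}} e^{i(tP_j(\xi_j)+x_j\xi_j)}|P_j''(\xi_j)|^{z/2}\,d\xi_j.
\end{equation*}

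Next I would verify that each $P_j$ belongs to the class $\mathcal{A}$. Since $P_j$ is a real polynomial of one variable of degree at least $2$, it is $C^3$ on all of $\mathbb{R}$, $P_j''$ has only finitely many zeros, and near each zero (as well as at $\pm\infty$) $P_j''$ behaves polynomially, satisfying the growth bounds required by Definition \ref{Definition}; this is exactly the case flagged in the remark following that definition (polynomials are rational functions). Therefore Lemma \ref{decay-1} applies to each factor, giving for $\mathrm{Re}(z)=1$
\begin{equation*}
\left|\int_{\mathbb{R}} e^{i(tP_j(\xi_j)+x_j\xi_j)}|P_j''(\xi_j)|^{z/2}\,d\xi_j\right|\leq C_j(1+|\mathrm{Im}(z)|)|t|^{-1/2},
\end{equation*}
uniformly in $x_j\in\mathbb{R}$ and $t\neq 0$, with the constant $C_j$ depending only on $P_j$.

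Multiplying these $d$ one-dimensional bounds together yields the claimed estimate with constant $C=\prod_{j=1}^d C_j$ and the exponent $d$ on $(1+|\mathrm{Im}(z)|)$ appearing naturally. I do not anticipate any real obstacle here: the only thing to check carefully is that the phase functions $P_j$ do satisfy all five bullets of Definition \ref{Definition}, which is routine because every $P_j''$ is itself a polynomial with finitely many real zeros and polynomial growth at infinity. The main conceptual point is simply that separability of $P$ turns the Hessian-weighted oscillatory integral into a tensor product, at which point the one-dimensional Lemma \ref{decay-1} does all the work.
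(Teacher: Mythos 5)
Your proof is correct and follows exactly the same route as the paper: factor $HP(\xi)=\prod_j P_j''(\xi_j)$ and the phase, split the integral into a product of one-dimensional oscillatory integrals via Fubini, note each $P_j\in\mathcal{A}$, and apply Lemma~\ref{decay-1} to each factor. You add a bit more detail (verifying the class $\mathcal{A}$ conditions and tracking the constant), but there is no substantive difference.
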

	\begin{proof}
		Note that $HP(\xi)=\prod\limits_{j=1}^d P''_j(\xi_j)$ and we can write
		\begin{equation*}
			\int_{\mathbb{R}^d} e^{i\left(t P(\xi)+x\cdot\xi\right)}|HP(\xi)|^\frac{z}{2}  d\xi=\prod\limits_{j=1}^d \int_{\mathbb{R}} e^{i(tP_j(\xi_j)+x_j\xi_j)}|P''_j(\xi_j)|^\frac{z}{2}d\xi_j. 
		\end{equation*}
		Observe that each $P_j\in\mathcal{A}$ for all $1\leq j\leq d$ and the desired result follows immediately from Lemma \ref{decay-1}.
	\end{proof}
	Finally, argued similarly to Theorem \ref{real line}, we obtain the following orthonormal Strichartz inequalities for  $\mathcal{W}_\frac{\nu}{2}$ in higher dimensions.
	\begin{theorem}\label{Higher dim}
		Let $P$ be a real elliptic polynomial of $m\geq2$ or $P(\xi)=\sum\limits_{j=1}^d P_j(\xi_j)$, where $P_j$ is an arbitrary real polynomial of one variable with degree $P_j\geq2$ for all $1\leq j\leq d$. Assume $\nu\in [\frac{2}{d+2},\frac{2}{d+1})$ and $(p,q)=\left(\frac{1}{1-\nu},\frac{2}{d\nu}\right)$. For any (possibly infinite) orthonormal system $\{f_j\}_{j\in J}$ in $L^2(\mathbb{R}^d)$ and any sequence $\{n_j\}_{j\in J}$ in $\mathbb{C}$, we have
		\begin{equation*}
			\left\|\sum_{j\in J}n_j|\mathcal{W}_\frac{\nu}{2}(t)f_j|^2\right\|_{L^q(\mathcal{\mathbb{R}},L^p(\mathbb{R}^d))} \leq C\left\|\{n_j\}_{j\in J}\right\|_{\ell^\frac{2p}{p+1}},
		\end{equation*}
		where $C>0$ is independent of $\{f_j\}_{j\in J}$ and $\{n_j\}_{j\in J}$.
	\end{theorem}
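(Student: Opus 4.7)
The plan is to mimic the one-dimensional argument proving Theorem \ref{real line}, reducing via the duality principle to a Schatten-class bound on a space-time operator and then interpolating analytically between a Hilbert-Schmidt estimate and an operator-norm estimate. I first apply Lemma \ref{duality-principle} with exponents $(p,q) = (1/(1-\nu), 2/(d\nu))$ and $\lambda = 2p/(p+1) = 2/(2-\nu)$ to convert the orthonormal inequality into the equivalent bound
$$\|W_1 \mathcal{T}_\nu W_2\|_{\mathfrak{S}^{2/\nu}(L^2(\mathbb{R}, L^2(\mathbb{R}^d)))} \lesssim \|W_1\|_{L^{4/(2-d\nu)}(\mathbb{R}, L^{2/\nu}(\mathbb{R}^d))} \|W_2\|_{L^{4/(2-d\nu)}(\mathbb{R}, L^{2/\nu}(\mathbb{R}^d))},$$
where $\mathcal{T}_\nu = \mathcal{W}_{\nu/2}(t)\mathcal{W}_{\nu/2}(t)^*$. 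As before, I regularize by replacing $\mathcal{T}_\nu$ with a truncated operator $T_\varepsilon$ whose time kernel vanishes on $|t-s| < \varepsilon$, aim for a bound uniform in $\varepsilon$, and let $\varepsilon \to 0^+$ to close the argument.

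For the interpolation, I build an analytic family of operators $\{T_{z,\varepsilon}\}$, with $Re(z) \in [0,1]$, whose space-time convolution kernel is $\mathcal{K}_{z,\varepsilon}(x,t) = t^{-1 + z/\nu} \chi_{|t|>\varepsilon}(t) I_z(x,t)$, with
$$I_z(x,t) = \int_{\mathbb{R}^d} e^{i(tP(\xi) + x \cdot \xi)} |HP(\xi)|^{z/2} \psi_P(\xi)^2 \, d\xi.$$
Then $T_{\nu,\varepsilon} = T_\varepsilon$ and the family is analytic in the sense of Stein. Two endpoint bounds in $z$ then need to be established. At $Re(z) = 1$, I invoke the higher-dimensional decay of the oscillatory integral: Lemma \ref{decay-d-1} in the elliptic case and Lemma \ref{decay-d-2} in the tensor-product case both give $|I_z(x,t)| \leq C(1+|Im(z)|)^d |t|^{-d/2}$ uniformly in $x$. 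Since the $\mathfrak{S}^2$ norm coincides with the $L^2$ norm of the kernel, applying the one-dimensional Hardy--Littlewood--Sobolev inequality in the time variable with singular exponent $d + 2 - 2/\nu \in (0,1)$ (corresponding exactly to $\nu \in (2/(d+2), 2/(d+1))$, the lower endpoint being covered by the Lorentz refinement of HLS) yields
$$\|W_1 T_{z,\varepsilon} W_2\|_{\mathfrak{S}^2} \leq C(Im(z)) \|W_1\|_{L^{4\nu/(2-d\nu)}(\mathbb{R}, L^2(\mathbb{R}^d))} \|W_2\|_{L^{4\nu/(2-d\nu)}(\mathbb{R}, L^2(\mathbb{R}^d))}.$$

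At $Re(z) = 0$, I rewrite
$$T_{z,\varepsilon} F(x,t) = \int_{|s| > \varepsilon} s^{-1 + iIm(z)/\nu} \, \mathcal{W}_z(s) F(x, t-s) \, ds.$$
Since $\mathcal{W}_z(s)$ is $L^2(\mathbb{R}^d)$-unitary on the imaginary axis, the norm of $T_{z,\varepsilon}$ on $L^2(\mathbb{R}, L^2(\mathbb{R}^d))$ reduces to the $L^2(\mathbb{R})$-norm of a one-sided Hilbert-transform-type singular integral in $t$ with multiplier $s^{-1 + iIm(z)/\nu}$, which grows at most exponentially in $|Im(z)|$ by the analysis of Vega \cite{Vega}. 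Composing with $L^\infty$ multiplications gives the operator-norm bound
$$\|W_1 T_{z,\varepsilon} W_2\|_{\mathfrak{S}^\infty} \leq C(Im(z)) \|W_1\|_{L^\infty(\mathbb{R}, L^\infty(\mathbb{R}^d))} \|W_2\|_{L^\infty(\mathbb{R}, L^\infty(\mathbb{R}^d))}.$$

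Finally, I apply Stein's complex interpolation in Schatten spaces (Lemma \ref{Schatten-bound}) between these two endpoints at $\theta = \nu$. Direct computation of the interpolated exponents yields $r_\theta = 2/\nu$, $p_\theta = 2/\nu$, and $q_\theta = 4/(2-d\nu)$, matching the target Schatten bound on $W_1 T_\varepsilon W_2$; sending $\varepsilon \to 0^+$ and translating back through the duality principle yields the claimed orthonormal Strichartz estimate. The main obstacle is the Hilbert--Schmidt endpoint at $Re(z) = 1$: it hinges on obtaining the sharp $|t|^{-d/2}$ decay of $I_z(x,t)$ uniformly in $x$ throughout the strip (the content of Lemmas \ref{decay-d-1} and \ref{decay-d-2}, which use stationary-phase arguments for elliptic $P$ and a tensor-product factorization reducing to Lemma \ref{decay-1} in the separable case), and the restriction $\nu \in [2/(d+2), 2/(d+1))$ reflects precisely the integrability threshold of HLS against this decay rate.
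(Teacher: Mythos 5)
Your proposal is correct and follows exactly the route the paper intends: the paper's own proof of Theorem \ref{Higher dim} simply points to the decay estimates of Lemmas \ref{decay-d-1}--\ref{decay-d-2} and declares that the argument of Theorem \ref{real line} carries over, and you have spelled out precisely that duality--regularization--analytic-interpolation scheme, with the correct HLS singularity exponent $d+2-2/\nu$ and the correct interpolation parameter $\theta=\nu$. Two small points of hygiene: your use of $\psi_P^2$ rather than $\psi_P$ in $I_z$ is actually the more accurate reading of $\mathcal{W}_{\nu/2}\mathcal{W}_{\nu/2}^*$ (the paper glosses this), and at $Re(z)=0$ the multiplier $e^{isP(\xi)}|HP(\xi)|^{iIm(z)/2}\psi_P(\xi)^2$ makes $\mathcal{W}_z(s)$ a contraction on $L^2(\mathbb{R}^d)$ rather than unitary (which suffices, since after Plancherel in $x$ the factor $|\,|HP|^{z/2}\psi_P^2|\le 1$ is discarded before the Hilbert-transform argument in $t$), while the left endpoint $\nu=2/(d+2)$ corresponds to singular exponent $0$ and needs only H\"older, not any Lorentz refinement of HLS.
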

	\begin{proof} 
		Similar to the  one dimensional  real line $\mathbb{R}$, for $Re(z)\geq 0$, we also define
		\begin{equation*}
			I_z(x,t)=\int_{\mathbb{R}^d} e^{i\left(t P(\xi)+x\cdot\xi\right)}|HP(\xi)|^\frac{z}{2}  \psi_P(\xi)d\xi, \quad    (x,t)\in\mathbb{R}^d\times\mathbb{R}.
		\end{equation*} 
		Then  from Lemma \ref{decay-d-1} and \ref{decay-d-2}, it follows that
		\begin{equation}\label{Decay-2}
			|I_z(x,t)|\leq C(1+|Im(z)|)^d|t|^{-\frac{d}{2}}.
		\end{equation}
		for $Re(z)=1$. Therefore, using the decay estimate \eqref{Decay-2}, the desired result automatically follows in a similar way that we used in Theorem \ref{real line}. and we omit the further details.
	\end{proof}
	\begin{remark}
		Notice that  Strichartz type estimate involving a single function for the operator $\mathcal{W}_\nu$ was proved by  Kenig-Ponce-Vega in Theorem \ref{Smoothing} for any $0\leq \nu<\frac{2}{d}$. However, due to the application of Hardy-Littlewood-Sobolev inequality, its orthonormal version (see  Theorem \ref{Higher dim}) holds only  for  $\nu\in [\frac{2}{d+2},\frac{2}{d+1})$. 
	\end{remark}
	\begin{remark}
		Similar to  Theorem \ref{orthonormal-restriction}, Theorem \ref{Higher dim} can also be seen as an orthonormal restriction theorem for the Fourier extension operator on noncompact hypersurfaces of the form $S=\{(\xi, P(\xi)) : \xi \in \mathbb{R}^{d}  \}$, where  $P$ be a real elliptic polynomial of $m\geq2$ or $P(\xi)=\sum\limits_{j=1}^d P_j(\xi_j)$ and $P_j$'s  are arbitrary real polynomial of one variable with degree $P_j\geq2$ for all $1\leq j\leq d$.
	\end{remark}
	
	\section{Appendix}  \label{sec8}
	In this section, we prove  Theorem \ref{Strichartz-single},  i.e.,  Strichartz estimates for a single function under the Assumption   $(\mathcal{A})$ and $(\mathcal{B})$. From  \cite{GV},  first, we recall some elementary abstract results.  For any vector space $\mathcal{D}$, we denote  $\mathcal{D}_a^*$  as the  algebraic dual of $\mathcal{D}$,  $\mathcal{L}_a(\mathcal{D}, X)$ as the space of linear maps from $\mathcal{D}$ to some other vector space $X$, and finally  $\langle\varphi, f\rangle_{\mathcal{D}}$ denotes the pairing between $\mathcal{D}_a^*$ and $\mathcal{D}$ with $f \in \mathcal{D}$ and $\varphi \in \mathcal{D}_a^*$, which we take to be linear in $f$ and anti linear in $\varphi$.    Then under these notations, the following lemma (see \cite{GV}) will be useful to prove  Theorem \ref{Strichartz-single}.
	\begin{lemma}\label{equ} Let $\mathcal{H}$ be a Hilbert space, $\mathcal{X}$ be a Banach space, $\mathcal{X}^*$ is the dual of $\mathcal{X}$, and $\mathcal{D}$ be a vector space densely contained in $\mathcal{X}$. Let $T\in \mathcal{L}_a(\mathcal{D},\mathcal{H})$ and   $T^*\in \mathcal{L}_a(H,\mathcal{D}_a^*)$ be its adjoint operator, defined by
		\begin{equation*}
			\langle T^*v,f \rangle_\mathcal{D}=\langle v,Tf \rangle_\mathcal{H}, \quad \forall f\in \mathcal{D}, \quad \forall v \in \mathcal{H},
		\end{equation*}
		where  $\langle\cdot,\cdot\rangle_\mathcal{H}$ is the scalar product in $\mathcal{H}$. Then the following three conditions are equivalent:
		
		(1) There exists  $0\leqslant a \leqslant \infty$ such that for all $f \in \mathcal{D}$,
		\begin{equation*}
			\|Tf\| \leqslant a\|f\|_X.
		\end{equation*}
		
		(2) $\mathfrak{R}(T^*)\subset \mathcal{X}^*$ and there exists   $0\leqslant a \leqslant \infty$, such that for all $v \in \mathcal{H}$,
		\begin{equation*}
			\|T^*v\|_{\mathcal{X}^*} \leqslant a\|v\|.
		\end{equation*}
		
		(3) $\mathfrak{R}(T^*T)\subset \mathcal{X}^*$,and there exists $a$, $0\leqslant a \leqslant \infty$, such that for all $f \in \mathcal{D}$,
		\begin{equation*}
			\|T^*Tf\|_{\mathcal{X}^*} \leqslant a^2\|f\|_\mathcal{X},
		\end{equation*}
		where $\|\cdot\|$ denote the norm in $\mathcal{H}$. The constant $a$ is the same in all three parts. If one of those conditions is satisfied, the operators $T$ and $T^*T$ extend by continuity to bounded operators from $\mathcal{X}$ to $\mathcal{H}$ and from $\mathcal{X}$ to $\mathcal{X}^*$, respectively.
	\end{lemma}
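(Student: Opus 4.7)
The plan is to establish the chain of implications $(1) \Leftrightarrow (2)$ first, and then leverage these equivalences to obtain $(3)$. The key tool throughout is the defining relation $\langle T^*v, f\rangle_\mathcal{D} = \langle v, Tf\rangle_\mathcal{H}$ combined with the density of $\mathcal{D}$ in $\mathcal{X}$, which will allow continuous extension of linear functionals defined initially only on $\mathcal{D}$.

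For $(1) \Rightarrow (2)$, I would fix $v \in \mathcal{H}$ and observe that for every $f \in \mathcal{D}$, the Cauchy--Schwarz inequality in $\mathcal{H}$ combined with (1) yields
\[
|\langle T^*v, f\rangle_\mathcal{D}| = |\langle v, Tf\rangle_\mathcal{H}| \leq \|v\|\,\|Tf\| \leq a\|v\|\,\|f\|_\mathcal{X}.
\]
Hence $f \mapsto \langle T^*v, f\rangle_\mathcal{D}$ is a continuous antilinear functional on $(\mathcal{D}, \|\cdot\|_\mathcal{X})$; by density of $\mathcal{D}$ in $\mathcal{X}$ it extends uniquely to an element of $\mathcal{X}^*$ of norm at most $a\|v\|$, so $T^*v$ represents an element of $\mathcal{X}^*$ with $\|T^*v\|_{\mathcal{X}^*} \leq a\|v\|$. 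Conversely, for $(2) \Rightarrow (1)$, I would compute, for $f \in \mathcal{D}$,
\[
\|Tf\|^2 = \langle Tf, Tf\rangle_\mathcal{H} = \langle T^*(Tf), f\rangle_\mathcal{D}.
\]
Hypothesis (2) applied to $v = Tf \in \mathcal{H}$ guarantees that $T^*(Tf)$ belongs to $\mathcal{X}^*$ with $\|T^*(Tf)\|_{\mathcal{X}^*} \leq a\|Tf\|$; the pairing on the right is then bounded by $\|T^*(Tf)\|_{\mathcal{X}^*}\|f\|_\mathcal{X} \leq a\|Tf\|\|f\|_\mathcal{X}$. Dividing through by $\|Tf\|$ (with the case $Tf = 0$ trivial) recovers (1).

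Once $(1) \Leftrightarrow (2)$ is in hand, the equivalence with $(3)$ is immediate. For $(1),(2) \Rightarrow (3)$, since $Tf \in \mathcal{H}$ we have $T^*Tf \in \mathcal{X}^*$ by (2), and chaining bounds gives $\|T^*Tf\|_{\mathcal{X}^*} \leq a\|Tf\| \leq a^2 \|f\|_\mathcal{X}$. For $(3) \Rightarrow (1)$, the same identity $\|Tf\|^2 = \langle T^*Tf, f\rangle_\mathcal{D}$, now combined directly with (3), gives $\|Tf\|^2 \leq \|T^*Tf\|_{\mathcal{X}^*}\|f\|_\mathcal{X} \leq a^2\|f\|_\mathcal{X}^2$. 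The final assertion about extension by continuity follows from density of $\mathcal{D}$ in $\mathcal{X}$ together with the uniform bounds just established.

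There is no genuine obstacle here, as this is a classical $TT^*$ equivalence in an abstract Banach space setting. The only point requiring mild care is bookkeeping: $T^*$ is \emph{a priori} defined only as a map into the algebraic dual $\mathcal{D}_a^*$, so the content of (2) and (3) is partly the assertion that the relevant outputs actually lie in the topological dual $\mathcal{X}^*$, and this must be recorded carefully when invoking the duality pairing $|\langle \varphi, f\rangle_\mathcal{D}| \leq \|\varphi\|_{\mathcal{X}^*}\|f\|_\mathcal{X}$ for $\varphi \in \mathcal{X}^*$.
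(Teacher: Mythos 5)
Your proof is correct and is the standard $TT^*$ argument; the paper does not actually prove this lemma but simply cites Ginibre--Velo \cite{GV}, and your argument is precisely the one found there, so there is nothing substantive to compare. One small bookkeeping slip: given the paper's convention that the pairing $\langle\varphi,f\rangle_{\mathcal D}$ is \emph{linear} in $f$ (and antilinear in $\varphi$), the map $f\mapsto\langle T^*v,f\rangle_{\mathcal D}$ you extend in the step $(1)\Rightarrow(2)$ is a continuous \emph{linear} functional, not antilinear; the bound and density extension go through unchanged, so this does not affect the argument.
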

	\begin{lemma} \label{DuilatyXX} Let $\mathcal{H}$, $\mathcal{D}$ and two triplets $(\mathcal{X}_i,T_i,a_i), i=1,2$, satisfy any of the conditions of Lemma \ref{equ}. Then for all choices of $i,j=1,2$, $\mathfrak{R}(T_i^*T_j) \subset \mathcal{X}_i^*$, and for all $f \in \mathcal{D}$, we have
		\begin{equation*}
			\|T_i^* T_jf\|_{\mathcal{X}_i^*} \leqslant a_i a_j\|f\|_{\mathcal{X}_j}.
		\end{equation*}
	\end{lemma}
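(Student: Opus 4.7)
The plan is to read Lemma \ref{DuilatyXX} as a straightforward composition of two bounded operators, exploiting the equivalence between conditions (1) and (2) in Lemma \ref{equ}. Since each triplet $(\mathcal{X}_i, T_i, a_i)$ is assumed to satisfy some condition of Lemma \ref{equ}, that lemma's equivalence gives us simultaneously both condition (1) for $T_j$ and condition (2) for $T_i^*$, regardless of which condition we started with. The entire argument then reduces to chaining the two resulting inequalities.

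Concretely, I would proceed as follows. First, applied to the triplet $(\mathcal{X}_j, T_j, a_j)$, Lemma \ref{equ}(1) yields that $T_j$ extends to a bounded operator from $\mathcal{X}_j$ into $\mathcal{H}$ with
\begin{equation*}
    \|T_j f\|_{\mathcal{H}} \leq a_j \|f\|_{\mathcal{X}_j}, \quad \forall f \in \mathcal{D}.
\end{equation*}
Second, applied to the triplet $(\mathcal{X}_i, T_i, a_i)$, Lemma \ref{equ}(2) yields $\mathfrak{R}(T_i^*) \subset \mathcal{X}_i^*$ together with
\begin{equation*}
    \|T_i^* v\|_{\mathcal{X}_i^*} \leq a_i \|v\|_{\mathcal{H}}, \quad \forall v \in \mathcal{H}.
\end{equation*}
Taking $v = T_j f \in \mathcal{H}$ for an arbitrary $f \in \mathcal{D}$, the second inequality makes sense, guarantees that $T_i^* T_j f \in \mathcal{X}_i^*$ (hence $\mathfrak{R}(T_i^* T_j) \subset \mathcal{X}_i^*$), and chains with the first to give $\|T_i^* T_j f\|_{\mathcal{X}_i^*} \leq a_i \|T_j f\|_{\mathcal{H}} \leq a_i a_j \|f\|_{\mathcal{X}_j}$, which is the claim.

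There is essentially no obstacle: the lemma is formally nothing more than the composition of two bounded maps, once the equivalence of conditions (1) and (2) in Lemma \ref{equ} is in hand. The only mild point of care is to note that $T_i^*$ is \emph{a priori} defined on all of $\mathcal{H}$ with values in $\mathcal{D}_a^*$, and Lemma \ref{equ}(2) is precisely what guarantees that its range actually lands in the genuine topological dual $\mathcal{X}_i^*$, so that $T_i^* T_j$ makes unambiguous sense as a map $\mathcal{D} \to \mathcal{X}_i^*$ (and then extends by continuity to $\mathcal{X}_j \to \mathcal{X}_i^*$). No other subtlety arises, and the stated norm bound $a_i a_j$ is sharp in the sense that it is the product of the two individual operator norms.
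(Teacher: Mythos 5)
Your proof is correct and is essentially the standard argument: since each triplet satisfies the (equivalent) conditions of Lemma \ref{equ}, condition (1) gives $\|T_j f\|_{\mathcal{H}}\leq a_j\|f\|_{\mathcal{X}_j}$ and condition (2) gives $\mathfrak{R}(T_i^*)\subset\mathcal{X}_i^*$ with $\|T_i^* v\|_{\mathcal{X}_i^*}\leq a_i\|v\|_{\mathcal{H}}$, and composing yields the claim. The paper itself gives no proof (it cites Ginibre--Velo \cite{GV}), but your argument is precisely the one found there, so no meaningful discrepancy exists.
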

	\begin{lemma}\label{L1LW}  Let $\mathcal{H}$ be a Hilbert space, let $I$ be an interval of $\mathbb{R}$, let $\mathcal{X} \subset \mathcal{S}'(I \times \mathbb{R}^n)$ be a Banach space, let $\mathcal{X}$ be stable under time restriction, let $\mathcal{X}$ and $T$ satisfy (any of) the conditions of Lemma \ref{equ}. Then the operator $T^*T$ is a bounded operator from $L_t^1(I,\mathcal{H})$ to $\mathcal{X}^*$ and from $\mathcal{X}$ to $L_t^\infty(I,\mathcal{H})$.
	\end{lemma}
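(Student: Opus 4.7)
The plan is to derive both mapping properties by composing extended versions of $T$ and $T^*$, exploiting the unitary propagator structure that implicitly underlies the Strichartz framework. In the Ginibre-Velo setting the conditions of Lemma~\ref{equ} correspond to the existence of a unitary family $\{U(t)\}_{t \in I}$ on $\mathcal{H}$ together with the representations $(T^*v)(\cdot, t) = U(t)v$ for $v \in \mathcal{H}$ and $TF = \int_I U(s)^{-1} F(\cdot, s)\, ds$ for $F \in \mathcal{D}$. This structural input, rather than conditions (1)--(3) of Lemma~\ref{equ} alone, is what enables the additional extensions needed below.

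For the first bound, I would extend $T$ from $\mathcal{D}$ to $L_t^1(I, \mathcal{H})$ by Bochner integration: for $G \in L_t^1(I, \mathcal{H})$, set $TG = \int_I U(s)^{-1} G(\cdot, s)\, ds \in \mathcal{H}$, which is well-defined since $U(s)^{-1}$ is an isometry on $\mathcal{H}$. Minkowski's inequality combined with the unitarity of $U(s)^{-1}$ yields
\begin{equation*}
\|TG\|_{\mathcal{H}} \leq \int_I \|G(\cdot, s)\|_{\mathcal{H}}\, ds = \|G\|_{L_t^1(I, \mathcal{H})},
\end{equation*}
so that $T: L_t^1(I, \mathcal{H}) \to \mathcal{H}$ has operator norm at most $1$. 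Composing with $T^*: \mathcal{H} \to \mathcal{X}^*$, bounded with norm at most $a$ by condition (2) of Lemma~\ref{equ}, gives $\|T^*T G\|_{\mathcal{X}^*} \leq a\|G\|_{L_t^1(I, \mathcal{H})}$, which is the first claim.

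For the second bound, I would observe that $T^*$ embeds $\mathcal{H}$ isometrically into $L_t^\infty(I, \mathcal{H})$: for every $v \in \mathcal{H}$ and every $t \in I$ the unitarity of $U(t)$ gives $\|(T^*v)(\cdot, t)\|_{\mathcal{H}} = \|U(t)v\|_{\mathcal{H}} = \|v\|_{\mathcal{H}}$. Composing this embedding with $T: \mathcal{X} \to \mathcal{H}$, bounded with norm at most $a$ by condition (1), yields $\|T^*T F\|_{L_t^\infty(I, \mathcal{H})} = \|TF\|_{\mathcal{H}} \leq a\|F\|_{\mathcal{X}}$. The two claims are in fact formally dual to each other: since $\mathcal{H}$ is Hilbert one has $(L_t^1(I, \mathcal{H}))^* = L_t^\infty(I, \mathcal{H})$, and the symmetry $\langle T^*T F_1, F_2\rangle = \langle T F_1, T F_2\rangle_{\mathcal{H}}$ of the associated bilinear form interchanges them. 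The main subtlety, and the step I would scrutinise most carefully, is that conditions (1)--(3) of Lemma~\ref{equ} do not by themselves force the unitary kernel representation of $T$; in the abstract statement this is tacitly assumed, and the proof must appeal to this structural input explicitly.
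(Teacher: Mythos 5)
Your argument is correct and is in essence the proof given in Ginibre--Velo \cite{GV}; the paper itself does not prove this lemma but merely recalls it from that reference, so there is no internal derivation to compare against. Your key observation is exactly the crux of the matter: conditions (1)--(3) of Lemma~\ref{equ} by themselves only furnish bounds between $\mathcal{X}$, $\mathcal{H}$ and $\mathcal{X}^*$, and it is the propagator representation $(T^*v)(\cdot,t)=U(t)v$, $TF=\int_I U(s)^{-1}F(\cdot,s)\,ds$ that supplies the two additional ingredients, namely that $T$ maps $L^1_t(I,\mathcal{H})$ into $\mathcal{H}$ with norm at most one (Minkowski plus unitarity of $U(s)^{-1}$) and that $T^*$ maps $\mathcal{H}$ isometrically into $L^\infty_t(I,\mathcal{H})$ (unitarity of $U(t)$); composing each with the corresponding bound of Lemma~\ref{equ} gives the two claims, which are indeed dual to one another via $(L^1_t(I,\mathcal{H}))^*=L^\infty_t(I,\mathcal{H})$ and the symmetry of $\langle T^*TF_1,F_2\rangle=\langle TF_1,TF_2\rangle_{\mathcal{H}}$. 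One minor remark: the hypothesis that $\mathcal{X}$ is stable under time restriction is not used in your proof, and indeed it is not needed for this particular conclusion; it enters only in the subsequent step where the unrestricted operator $T^*T$ is upgraded to the retarded Duhamel operator $\int_0^t e^{i(t-s)L}F(\cdot,s)\,ds$, as the paper itself indicates in Step~4 of the Appendix. You might note that explicitly so a reader does not search for where that hypothesis is invoked.
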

	Now we are in a position to prove  Theorem \ref{Strichartz-single}, but we provide only the main important steps.  
	\begin{proof}[Proof of  Theorem \ref{Strichartz-single}]
		\textbf{Step 1:} It follows from $(\mathcal{A})$ that the Schr\"{o}dinger operator $e^{itL}$ extends to a bounded operator from $L^1(X)$ to $L^\infty(X)$ with
		\begin{equation*}
			\|e^{itL}f\|_{L^\infty(X)}\lesssim |t|^{-\frac{n}{2}}\|f\|_{L^1(X)}.
		\end{equation*}
		The Schr\"{o}dinger operator $e^{itL}$ is a unitary operator on $L^2(X)$ and we have
		\begin{equation*}
			\|e^{itL}f\|_{L^2(X)}= \|f\|_{L^2(X)}.
		\end{equation*}
		By Riesz-Thorin interpolation theorem, we deduce that the Schr\"{o}dinger operator $e^{itL}$ extends to a bounded operator from $L^{p'}(X)$ to $L^p(X)$ with
		\begin{equation}\label{Decay}
			\|e^{itL}f\|_{L^p(X)}\lesssim |t|^{-n\left(\frac{1}{2}-\frac{1}{p}\right)} \|f\|_{L^{p'}(X)},
		\end{equation}
		for any $p\geq 2$.
		
		\textbf{Step 2:} We prove the diagonal Strichartz estimate. For the particular case $p_1=p_2=p\geq 2$, $q_1=q_2=q$ and $0\leq\frac{2}{q}=n\left(\frac{1}{2}-\frac{1}{p}\right)<1$, by \eqref{Decay}, Minkowski's inequality and Hardy-Littlewood-Sobolev inequality, we obtain
		\begin{align*}
			\left\|\int_0^te^{i(t-s)L}F(\cdot,s)ds\right\|_{L^q\left(\mathcal{I},L^p(X)\right)}&\lesssim \left\|\int_0^t\frac{\|F(\cdot,s)\|_{L^{p'}(X)}}{|t-s|^{n\left(\frac{1}{2}-\frac{1}{p}\right)}}ds\right\|_{L^q(\mathcal{I})}\\
			&\lesssim \|F\|_{L^{q'}\left(\mathcal{I},L^{p'}(X)\right)}.
		\end{align*}
		
		\textbf{Step 3:} By the same procedure as Step 2, we also prove that 
		\begin{equation}\label{Factorization}
			\left\|\int_\mathcal{I}e^{i(t-s)L}F(\cdot,s)ds\right\|_{L^q\left(\mathcal{I},L^p(X)\right)}
			\lesssim \|F\|_{L^{q'}\left(\mathcal{I},L^{p'}(X)\right)}.
		\end{equation}
		
		\textbf{Step 4:} Let $\mathcal{H}=L^2(X)$ and $\mathcal{X}=L^{q'}\left(\mathcal{I},L^{p'}(X)\right)$ in Lemma \ref{equ}. We shall define the bounded operator
		\begin{align*}
			T: L^{q'}\left(\mathcal{I},L^{p'}(X)\right)&\rightarrow L^2(X)\\
			F(x,t)&\mapsto \int_\mathcal{I}  e^{-isL}F(x,s)ds,
		\end{align*}
		whose adjoint operator is
		\begin{align*}
			T^*: L^2(X)&\rightarrow L^q\left(\mathcal{I},L^p(X)\right)\\
			f(x)&\mapsto e^{itL}f(x),
		\end{align*}
		where duality is define by the scalar products in $L^2(X)$ and in $L^2\left(\mathcal{I},L^2(X)\right)$. By factorization of $T^*T$, we have
		\begin{equation*}
			T^*TF(x, t)=\int_\mathcal{I}e^{i(t-s)L}F(x,s)ds.
		\end{equation*}
		The homogeneous Strichartz estimate and the dual homogeneous Strichartz estimate come out immediately from the obtained result \eqref{Factorization} and Lemma \ref{equ}. Using Lemma \ref{DuilatyXX} and \eqref{Factorization}, we get
		\begin{equation}\label{non-retarded}
			\left\|\int_\mathcal{I}e^{i(t-s)L}F(\cdot,s)ds\right\|_{L^{q_2}\left(\mathcal{I},L^{p_2}(X)\right)}\lesssim \|F\|_{L^{q_2'}\left(\mathcal{I},L^{p_1'}(X)\right)},
		\end{equation}
		unfortunately, which is not the integral operator in the retarded Strichartz estimate because of the "retardation" in time. For any function $F$ on $\mathcal{I}\times X$, we can rewrite the integral operator by
		\begin{equation*}
			\int_0^te^{i(t-s)L}F(x,s)ds=\int_\mathcal{I} \chi_{\mathbb{R}^+}(t-s)e^{i(t-s)L}\tilde{F}(x,s)ds,\;\forall t\in \mathcal{I},
		\end{equation*}
		where $\chi_{\mathbb{R}^+}$ is the characteristic function on $\mathbb{R}^+$ and $\tilde{F}$ is the continuation of $F$ by zero over $\mathbb{R}^+\times X$.
		Due to the fact that the space $\mathcal{X}=L^{q'}\left(\mathcal{I}, L^{p'}(X)\right)$ is unstable under restriction in time (which means that the multiplication by the characteristic function of an interval in time is a bounded operator with norm uniformly bounded with respect to the interval), we can overcome this technical multiplication and prove the retarded Strichartz estimates by the same procedure as \eqref{non-retarded}.
	\end{proof}
	
	\section*{Acknowledgments} MS is supported by the Guangdong Basic and Applied Basic Research Foundation (No. 2023A1515010656). HW is supported by the National Natural Science Foundation of China (Grant No. 12171399 and 12271041). SSM is supported by the DST-INSPIRE Faculty Fellowship
	DST/INSPIRE/04/2023/002038.

\end{document}